\journal{European Journal of Combinatorics}
\theoremstyle{plain}
  \newtheorem*{theorem*}{Theorem}
  \newtheorem{theorem}{Theorem}[section]
  \newtheorem{lemma}[theorem]{Lemma}
  \newtheorem{conjecture}{Conjecture}
  \newtheorem*{conjecture*}{Conjecture}
  \newtheorem{corollary}[theorem]{Corollary}
  \newtheorem{observation}[theorem]{Observation}
\theoremstyle{definition}
  \newtheorem{definition}[theorem]{Definition}
  \newtheorem*{problem*}{Problem}
\theoremstyle{remark}
  \newtheorem*{remark*}{Remark}
  \newtheorem*{claim*}{Claim}
  \newcounter{claimcount}
  \theoremstyle{remark}
    \newtheorem{claim}[claimcount]{Claim}
\newenvironment{subproof}[1][\proofname]{%
  \begin{proof}[#1]%
}{%
  \end{proof}%
}
\newcommand{\abs}{\mathrel{\unlhd}}
\newcommand{\compose}{\mathbin{{<}\mkern-14mu{-}}}
  \newcommand{\myrightarrow}{\mathrel{\longrightarrow\mkern-13mu{\blacktriangleright}}}
\DeclareMathOperator{\lvl}{\mathrm{lvl}}
\DeclareMathOperator{\hgt}{\mathrm{hgt}}
\DeclareMathOperator{\alg}{\mathbf{alg}}
\DeclareMathOperator{\IdPol}{\mathrm{IdPol}}
\DeclareMathOperator{\Clo}{\mathrm{Clo}}
\DeclareMathOperator{\dist}{\mathrm{dist}}
\DeclareMathOperator{\Path}{\mathrm{Path}}
\begin{document}

\begin{frontmatter}

\title{On the complexity of $\mathbb H$-coloring for special oriented trees}

\author{Jakub Bulín}
\address{Department of Algebra, Faculty of Mathematic and Physics, Charles University, Sokolovsk{\'a}~83, 18675 Prague 8, Czechia\\ }
\ead{jakub.bulin@gmail.com}

\begin{abstract}
For a fixed digraph $\mathbb H$, the $\mathbb H$-coloring problem is
the problem of deciding whether a given input digraph $\mathbb G$ admits a homomorphism to
$\mathbb H$. The CSP dichotomy conjecture of Feder and Vardi is equivalent to proving that, for any $\mathbb H$, the $\mathbb H$-coloring problem is in \textbf{P} or \textbf{NP}-complete. We confirm this dichotomy for a certain class of oriented trees, which we call special trees (generalizing earlier results on special triads and polyads). Moreover, we prove that every tractable special oriented tree has bounded width, i.e., the corresponding $\mathbb H$-coloring problem is solvable by local consistency checking. Our proof relies on recent algebraic tools, namely characterization of congruence meet-semidistributivity via pointing operations and absorption theory.

{\noindent\scriptsize\textit{\copyright 2017. This manuscript version is made available under the CC-BY-NC-ND 4.0 license.}}
\end{abstract}

\begin{keyword}
constraint satisfaction problem \sep H-coloring problem \sep oriented
tree \sep bounded width \sep polymorphism
\MSC[2010] 08A70 \sep 05C85

\end{keyword}

\end{frontmatter}

\section{Introduction}

The Constraint Satisfaction Problem (CSP) provides a common framework for various problems from theoretical computer science as well as for many real-life applications (e.g. in graph theory, database theory, artificial intelligence, scheduling). Its history dates back to the 1970s and it has been central to the development of theoretical computer science in the past few decades.

For a fixed (finite) relational structure $\mathbb A$, the Constraint Satisfaction Problem with template $\mathbb A$, or $\mathrm{CSP}(\mathbb A)$ for short, is the following decision problem:
\begin{quote}
INPUT: A relational structure $\mathbb X$ (of the same type as $\mathbb A$).\\
QUESTION: Is there a homomorphism from $\mathbb X$ to $\mathbb A$?
\end{quote}
\noindent For a (directed) graph $\mathbb H$, $\mathrm{CSP}(\mathbb H)$ is also commonly referred to as the $\mathbb H$-coloring problem.

A lot of interest in this class of problems was sparked by a seminal work of Feder and Vardi \cite{feder_computational_1999}, in which the authors established a connection to computational complexity theory. They conjectured a large natural class of \textbf{NP} decision problems avoiding the complexity classes strictly between \textbf{P} and \textbf{NP}-complete (assuming that \textbf{P}$\neq$\textbf{NP}): Monotone Monadic SNP without inequality (MMSNP). Many natural decision problems, such as $k$-SAT, graph $k$-colorability or solving systems of linear equations over finite fields belong to this class. They also proved that each problem from this class is polynomial time equivalent to $\mathrm{CSP}(\mathbb A)$, for some relational structure $\mathbb A$. (The reduction from CSP to MMSNP, originally randomized, was later derandomized by Kun in \cite{kun_constraints_2013}.) Hence their conjecture can be formulated as follows.

\begin{conjecture}[The CSP dichotomy conjecture]
For every (finite) relational structure $\mathbb A$, $\mathrm{CSP}(\mathbb A)$ is in \textbf{P} or \textbf{NP}-complete.
\end{conjecture}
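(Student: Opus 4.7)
The plan is to attack the conjecture via the algebraic approach to constraint satisfaction: the complexity of $\mathrm{CSP}(\mathbb{A})$ is determined by the clone of polymorphisms $\mathrm{Pol}(\mathbb{A})$, so the dichotomy can be restated as a dichotomy among finite algebras, and universal-algebraic tools can be brought to bear.

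First I would reduce to a cleaner class of templates. For any $\mathbb{A}$ replace it by its core and adjoin all unary singleton relations; the resulting $\mathbb{A}'$ has idempotent polymorphism algebra $\mathbf{A}'$, and $\mathrm{CSP}(\mathbb{A})$ is polynomial-time equivalent to $\mathrm{CSP}(\mathbb{A}')$. Next I would locate the algebraic dividing line. If the variety generated by $\mathbf{A}'$ pp-interprets the clone of projections on a two-element set, one can reduce a known \textbf{NP}-complete problem (e.g.\ 1-in-3 SAT) to $\mathrm{CSP}(\mathbb{A}')$, so it is \textbf{NP}-complete. Otherwise, by the theorem of Maróti and McKenzie (strengthened by Barto and Kozik), $\mathbf{A}'$ admits a weak near-unanimity, equivalently cyclic, term. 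The remaining task is to produce a polynomial-time algorithm for every such template.

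The main obstacle lies entirely on the positive side. Two regimes are understood: if the variety of $\mathbf{A}'$ is congruence meet-semidistributive then local consistency checking solves $\mathrm{CSP}(\mathbb{A})$, as developed in the present paper through pointing operations and absorption theory; if $\mathbf{A}'$ has few subpowers, witnessed by a Mal'cev-like term, then compact-representation methods give a Gauss-style algorithm. Neither alone suffices. In the general case one must combine them by exhibiting absorbing subuniverses that peel off bounded-width components, reducing the problem to strictly smaller instances on which Mal'cev-like structure appears, and iterating. Making this fusion uniform -- controlling the interplay between meet-semidistributive quotients and affine pieces inside a single recursive algorithm driven by absorption and tame congruence theory -- is the hard part and the step where the argument will stand or fall.
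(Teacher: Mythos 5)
This statement is labeled a \emph{conjecture} in the paper, and the paper does not prove it; it only confirms the dichotomy for the restricted class of special oriented trees (Corollary \ref{corollary:dichotomy}). So there is no ``paper's own proof'' to compare against. Your proposal is not a proof either, and you say so yourself: after correctly laying out the standard algebraic reduction to idempotent cores and the hardness half via interpretation of the projection clone, you arrive at the genuinely open part --- producing a uniform polynomial-time algorithm for every Taylor (WNU/cyclic) algebra --- and describe it only as a goal (``the step where the argument will stand or fall''). A sketch that ends by naming the missing algorithm is a research program, not a proof.

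Concretely, the gap is the entire tractability side beyond the two known special regimes. Bounded width via $\mathrm{SD}(\wedge)$ and few subpowers via Mal'cev-type terms are both established, but the union of the two does not cover all Taylor algebras, and ``peel off bounded-width components, reduce to smaller instances with Mal'cev-like structure, and iterate'' is not an algorithm: it does not specify what invariant decreases, why the decomposition preserves solvability, or how affine and semilattice-like pieces interact inside a single instance. This is precisely the difficulty that the eventual proofs (Bulatov 2017, Zhuk 2017) spent their entire length resolving, by quite different and substantially more intricate machinery than the absorption and pointing-operation tools used in this paper. So the proposal correctly identifies the landscape but proves nothing; within the scope of this paper the statement remains, as labeled, a conjecture.
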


At that time this conjecture was supported by two major cases: Schaefer's dichotomy result for two-element domains \cite{schaefer_complexity_1978} and the dichotomy theorem for undirected graphs by Hell and Ne{\v s}et{\v r}il \cite{hell_complexity_1990}. A major breakthrough followed the work of Jeavons, Cohen and Gyssens \cite{jeavons_closure_1997}, later refined by Bulatov, Jeavons and Krokhin \cite{bulatov_classifying_2005}, which uncovered an intimate connection between the constraint satisfaction problem and universal algebra. This connection brought a better understanding of the known results as well as a number of new results which seemed out of reach for the pre-algebraic methods. The most important results include dichotomies for three-element domains \cite{bulatov_dichotomy_2006} and for conservative structures (i.e., containing all subsets as unary relations) \cite{bulatov_tractable_2003} by Bulatov (see also \cite{barto_dichotomy_2011}), a characterization of solvability by the \emph{few subpowers} algorithm (a generalization of Gaussian elimination) by Berman et al \cite{berman_varieties_2010,idziak_tractability_2007} and solvability by local consistency checking (so-called \emph{bounded width}) by Barto and Kozik \cite{barto_constraint_2014} (conjectured in \cite{larose_bounded_2007}). Larose and Tesson \cite{larose_universal_2009} successfully applied the theory to study finer complexity classes of CSPs extending the result of Allender et al \cite{allender_refining_2005} for boolean CSPs.

The connection between CSPs and algebras turned out to be fruitful in both directions; it has lead to a discovery of important structural properties of finite algebras. Of particular importance to us is the theory of absorption by Barto and Kozik \cite{barto_absorbing_2012,barto_maltsev_2013,barto_absorption_2017} and a characterization of congruence meet-semiditributivity via pointing operations by Barto, Kozik, and Stanovsk{\' y} \cite{barto_robust_2012,barto_maltsev_2013}.

In the paper \cite{feder_computational_1999}, Feder and Vardi also constructed, for every structure $\mathbb A$, a directed graph $\mathcal D'(\mathbb A)$ such that $\mathrm{CSP}(\mathbb A)$ and $\mathrm{CSP}(\mathcal D'(\mathbb A))$ are polynomial-time equivalent. Hence the CSP dichotomy conjecture is equivalent to its restriction to digraphs. A streamlined variant of this reduction which we will denote by $\mathcal D(\mathbb A)$ (and which is, in fact, logspace) is studied by the author, Deli{\' c}, Jackson, and Niven in \cite{bulin_reduction_2013,bulin_finer_2015}, where we prove that most properties relevant to the CSP carry over from $\mathbb A$ to $\mathcal D(\mathbb A)$. As a consequence, the algebraic conjectures characterizing CSPs solvable in \textbf{P} \cite{bulatov_classifying_2005}, \textbf{NL}, and \textbf{L} \cite{larose_universal_2009} are equivalent to their restrictions to digraphs. The digraphs $\mathcal D(\mathbb A)$ are, in fact, \emph{special balanced digraphs} in the terminology of this paper, a generalization of special triads, special polyads and special trees discussed below.

Using the algebraic approach, Barto, Kozik, and Niven confirmed the conjecture of Bang-Jensen and Hell and proved dichotomy for \emph{smooth digraphs} \cite{barto_csp_2008} (i.e., digraphs with no sources and no sinks). The dichotomy was also established for a number of other classes of digraphs, e.g. oriented paths \cite{gutjahr_polynomial_1992} (which are all tractable) or oriented cycles \cite{feder_classification_2001}. See \cite{larose_algebra_2017} for a recent survey.

This paper is concerned with the $\mathbb H$-coloring problem for oriented trees. In the class of all digraphs, oriented trees are in some sense very far from smooth digraphs, and the algebraic tools seem to be not yet developed enough to deal with them. Hence oriented trees serve as a good field-test for new methods.

Apart from oriented paths, the simplest class of oriented trees are \emph{triads} (i.e., oriented trees with one vertex of degree $3$ and all other vertices of degree $2$ or $1$); the CSP dichotomy remains open even for triads. Among the triads, Hell, Ne{\v s}et{\v r}il and Zhu \cite{hell_complexity_1996,hell_duality_1996} identified a (fairly restricted) subclass, for which they coined the term \emph{special triads} and which allowed them to handle at least some examples. For instance, they constructed a special triad that gives rise to an \textbf{NP}-complete $\mathbb H$-coloring problem.

In \cite{barto_csp_2009}, Barto et al used algebraic methods to prove that every special triad either gives rise to an \textbf{NP}-complete $\mathbb H$-coloring problem, or possesses a compatible majority operation (so-called \emph{strict width~2}) or compatible totally symmetric idempotent operations of all arities (so-called \emph{width~1}). In \cite{barto_csp_2013}, the author and Barto established the CSP dichotomy conjecture for \emph{special polyads}, a generalization of special triads where the one vertex of degree greater than 2 is allowed to have an arbitrary degree. In particular, every tractable core special polyad has bounded width. However, there are special polyads which have bounded width, but neither bounded strict width nor width~1.

In this paper, we study \emph{special trees}, a broad generalization of special triads and special polyads. We confirm the CSP dichotomy conjecture for special trees and, moreover, prove that every tractable core special tree has bounded width.

The proof uses modern tools from the algebraic approach to the CSP (in particular, absorption and pointing operations \cite{barto_maltsev_2013}) and is somewhat simpler and more natural than the proofs in \cite{barto_csp_2009} and \cite{barto_csp_2013}. Therefore we believe that there is hope for further generalization. In particular, we conjecture that tractability implies bounded width for all oriented trees.

\subsection*{Organization of the paper}

Section~2 introduces basic notions and fixes notation used throughout the paper. In Section~3 we define special trees (and the previously studied subclasses: special triads and special polyads) and state the main result. Section~4 contains the universal algebraic tools we use. The main result is proved in Section~5. In the last section we discuss the results and related open problems.

\section{Preliminaries} \label{section:preliminaries}

In this section, we introduce basic notions and fix notation used throughout the paper. Our aim is to make the paper accessible to a wider audience outside of universal algebra. Therefore we only assume the reader to possess some very basic knowledge of graph theory and universal algebra. We refrain from using specialist terminology wherever possible, or move it to explanatory remarks which the reader may skip.

We recommend \cite{hell_graphs_2004} for a detailed exposition of digraphs, relational structures (under the name ``general relational systems'') and their homomorphisms as well as an introduction to graph coloring and constraint satisfaction. For an introduction to the notions from universal algebra that are not explained in detail in this paper, we invite the reader to consult \cite{bergman_universal_2011}.  The primary source for the algebraic approach to the CSP is the paper \cite{bulatov_classifying_2005}.

\subsection{Notation}

For a positive integer $n$ we denote the set $\{1,2,\dots,n\}$ by
$[n]$; we set $[0]=\emptyset$. We write tuples using boldface notation, e.g. $\mathbf
a=(a_1,a_2,\dots,a_k)\in A^k$. When ranging over
tuples we use superscripts, e.g. $(\mathbf{a^1},\mathbf{a^2},\dots,\mathbf{a^n})\in (A^k)^n$, where $\mathbf{a^i}=(a^i_1,a^i_2,\dots,a^i_k)$, for $i\in [n]$. We sometimes write $\langle a_1a_2\dots\rangle$ to denote a sequence of elements.

\subsection{Relational structures}

A relational signature $\sigma$ is a (in our case finite) set of relation symbols $R_i$, each with an associated arity $k_i$. A (finite) \emph{relational structure} $\mathbb A$ with signature $\sigma$ is a finite, nonempty set $A$ (the \emph{universe}) equipped with relations $R_i^\mathbb A\subseteq A^{k_i}$, for each relation symbol $R_i$ of arity $k_i$ in $\sigma$. We follow the standard convention of using $A$, $B$, \dots\ to denote the universe of $\mathbb A$, $\mathbb B$, \dots

Let $\mathbb A$ and $\mathbb B$ be two $\sigma$-structures.
A mapping $\varphi:A\to B$ is a \emph{homomorphism} from $\mathbb A$ to $\mathbb B$, if for each relational symbol $R_i$ of arity $k_i$ in $\sigma$ and for each $\mathbf a\in R_i^\mathbb A$  we have $(\varphi(a_1),\dots,\varphi(a_k))\in R_i^\mathbb B$.

We write $\varphi:\mathbb A\to\mathbb B$ to mean that $\varphi$ is a homomorphism from $\mathbb A$ to $\mathbb B$, and $\mathbb A\to\mathbb B$ to mean that there exists a homomorphism from $\mathbb A$ to $\mathbb B$.

For every $\mathbb A$ there exists a relational structure $\mathbb A'$ such that $\mathbb A\to\mathbb A'$, $\mathbb A'\to\mathbb A$, and $\mathbb A'$ is of minimal size with respect to these properties; such structure $\mathbb A'$ is called the \emph{core of $\mathbb A$} (it is unique up to isomorphism); $\mathbb A$ is a \emph{core} if it is the core of itself.

We will be almost exclusively interested in a special type of relational structures: \emph{directed graphs}.

\subsection{Digraphs}  A \emph{digraph} (short for ``\emph{directed graph}'') is a relational
structure $\mathbb G=(G;E)$ with a single binary relation $E\subseteq G^2$. We call $u\in G$ and $(u,v)\in E$ (sometimes denoted by $u\myrightarrow v$) \emph{vertices} and \emph{edges} of $\mathbb G$, respectively. A digraph $\mathbb G'=(G';E')$ is a \emph{subgraph} of $\mathbb G$, if $G'\subseteq G$ and $E'\subseteq E$. It is an \emph{induced subgraph} if $E'=E\cap (G')^2$.

An \emph{oriented path} is a digraph $\mathbb P$ which consists of a non-repeating sequence of vertices $\langle v_0 v_1 \dots v_k\rangle$ (allowing for the degenerate case $k=0$) such that precisely one of $(v_{i-1},v_i),(v_i,v_{i-1})$ is an edge, for each $i\in [k]$; the number $k$ is called the \emph{length} of $\mathbb P$. We usually require oriented paths to have a fixed direction, and thus an \emph{initial} and a \emph{terminal} vertex. An \emph{oriented cycle} is a digraph which can be obtained from an oriented path of nonzero length by identifying the initial and terminal vertex.

For $a,b\in G$ we say that $a$ is \emph{connected} to $b$ in $\mathbb G$ via an oriented path $\mathbb P$, if $\mathbb P$ is a subgraph of $\mathbb G$ and $a$ and $b$ are the initial and terminal vertex of $\mathbb P$, respectively. The \emph{distance} between $a$ and $b$ in $\mathbb G$, denoted $\dist_\mathbb G(a,b)$, is then the length of the shortest oriented path $\mathbb P'$ connecting $a$ to $b$ in $\mathbb G$.
Connectivity is an equivalence relation, its classes are \emph{connected components} of $\mathbb G$ and $\mathbb G$ is \emph{connected} if it consists of a single connected component.

For $n>0$, the \emph{$n$th direct power of $\mathbb G$} is the digraph $\mathbb G^n=(G^n,E^n)$, i.e., its vertices are $n$-tuples of vertices of $\mathbb G$ and the edge relation is
$$
\{(\mathbf u,\mathbf v)\in (G^n)^2\mid (u_i,v_i)\in E\text{ for all }i\in[n]\}.
$$
Connectivity in direct powers of digraphs will play an important role.

An \emph{oriented tree} is a connected digraph containing no oriented cycles. Equivalently, it is a digraph in which every two vertices are connected via a unique oriented path. Oriented paths and trees are natural examples of balanced digraphs: a connected digraph is \emph{balanced} if it admits a \emph{level function} $\lvl:G\to\mathbb N\cup\{0\}$, where $\lvl(b)=\lvl(a)+1$ whenever $(a,b)$ is an edge, and the minimum level is $0$. The maximum level is called the \emph{height} of digraph $\mathbb G$ and denoted by $\hgt(\mathbb G)$.

\subsection{Algebras}

A \emph{$k$-ary operation} on a set $A$ is a mapping $f\colon A^k\to A$. By an \emph{algebra} we mean a pair $\mathbf A=(A;\mathcal F)$, where $A$ is a nonempty set and $\mathcal F$ is a set of operations on $A$ (so-called \emph{basic operations} of $\mathbf A$). We denote by $\Clo(\mathbf A)$ the set of all \emph{term operations} of $\mathbf A$ (i.e., operations obtained from $\mathcal F$ together with the \emph{projection} operations by composition).

A subset $B\subseteq A$ is a \emph{subuniverse} of $\mathbf A$ (denoted by $B\leq\mathbf A$) if it is closed under all (basic, or equivalently term) operations of $\mathbf A$. A nonempty subuniverse $B$ is an algebra in its own right, equipped with operations of $\mathbf A$ restricted to $B$, i.e., $(B; \{f |_B\mid f\in\mathcal F\})$. We will frequently use the fact that an intersection of subuniverses is again a subuniverse.

An operation is \emph{idempotent} if $f(x,x,\dots,x)=x$ for all $x\in A$. An algebra is idempotent if all of its (basic, or equivalently term) operations are idempotent. Note that an algebra $\mathbf A$ is idempotent, if and only if $\{a\}\leq\mathbf A$ for every $a\in A$.

For $n>0$, the \emph{$n$th power} of $\mathbf A$ is the algebra $\mathbf A^n=(A^n;\{f\times\dots \times f\mid f\in\mathcal F\})$ where $f\times\dots \times f$ means that $f$ is applied to $n$-tuples of elements coordinatewise.

We write $C\leq B\leq\mathbf A$ to mean that both $B$ and $C$ are subuniverses of $\mathbf A$ and $C\subseteq B$. In particular, if $B$ and $C$ are subuniverses of $\mathbf A$, then $E\leq B\times C$ means that $E$ is a subuniverse of $\mathbf A^2$ contained in $B\times C$ (which is a subuniverse of $\mathbf A^2$ as well).

All algebras we will work with are subuniverses of a certain finite idempotent algebra (or rarely of its 2nd power): the \emph{algebra of idempotent polymorphisms} of some fixed relational structure.
\subsection{Algebra of idempotent polymorphisms}

Note that a digraph homomorphism is simply an edge-preserving mapping. The notion of digraph \emph{polymorphism} is a natural generalization to higher arity operations:

A $k$-ary ($k>0$) operation $\varphi$ on $G$ is a \emph{polymorphism} of a digraph $\mathbb G$, if it is a homomorphism from $\mathbb G^k$ to $\mathbb G$. This means that $\varphi$ preserves edges in the following sense: if $a_i\myrightarrow b_i$ for $i\in[k]$, then $\varphi(\mathbf a)\myrightarrow\varphi(\mathbf b)$. The notions of $k$th direct power, preserving a relation, and polymorphism generalize naturally to relational structures.

Let $\mathbb A$ be a relational structure. The \emph{algebra of idempotent polymorphisms} of $\mathbb A$ is the algebra $\alg\mathbb A=(A;\IdPol(\mathbb A))$, where $\IdPol(\mathbb A)$ denotes the set of all idempotent polymorphisms of $\mathbb A$; we write $\IdPol_k(\mathbb A)$ to denote its $k$-ary part.

A relation $S\subseteq A^n$ is \emph{primitive positive definable} from $\mathbb A$ \emph{with constants}, if it is definable by an existentially quantified conjunction of atomic formul{\ae} of the form $x_i=a$ or $R(x_{i_1},\dots,x_{i_j})$, where $a\in A$ and $R$ is one of the relations of $\mathbb A$. The following fact, based on the Galois correspondence between clones and relational clones \cite{bodnarcuk_galois_1969,geiger_closed_1968} is central to the algebraic approach to the CSP.

\begin{lemma}[see {\cite[Proposition 2.21]{bulatov_classifying_2005}}] \label{lemma:pp-definable=subuniverse}
A relation $S\subseteq A^n$ is primitive positive definable from $\mathbb A$ with constants, if and only if $S$ is a subuniverse of $(\alg\mathbb A)^n$.
\end{lemma}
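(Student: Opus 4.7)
The statement is a concrete instance of the classical Galois correspondence between clones of operations and clones of relations on a finite set, where the presence of constants in pp-formulas corresponds precisely to the restriction to idempotent polymorphisms (since $f$ is idempotent if and only if $f$ preserves every singleton $\{a\}\subseteq A$ as a unary relation). My plan is to prove the two directions of the biconditional separately.

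For the forward implication I would induct on the structure of a pp-formula with constants. The base cases are atomic: a constraint $R(x_{i_1},\dots,x_{i_j})$ with $R$ a relation of $\mathbb A$ defines a subuniverse of $(\alg\mathbb A)^n$ by the very definition of (idempotent) polymorphism, while a constraint $x_i=a$ defines $\{\mathbf v\in A^n\colon v_i=a\}$, which is a subuniverse of $(\alg\mathbb A)^n$ because $\{a\}\leq\alg\mathbb A$ is an immediate consequence of idempotence. For the induction steps, conjunction corresponds to intersection of subuniverses and existential quantification corresponds to projection onto a subset of coordinates; both operations preserve the subuniverse property, since operations on $(\alg\mathbb A)^n$ act coordinatewise.

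For the converse, given $S\leq(\alg\mathbb A)^n$, I would construct a pp-formula with constants that defines $S$. If $S=\emptyset$, any unsatisfiable formula (such as $x_1=a\wedge x_1=a'$ for distinct $a,a'\in A$) works; assume therefore $S\neq\emptyset$ and enumerate $S=\{\mathbf s^1,\dots,\mathbf s^m\}$. View $S$ as an $n\times m$ matrix whose columns are the $\mathbf s^j$, and write $\mathbf r_i=(s_i^1,\dots,s_i^m)\in A^m$ for its rows. The key auxiliary object is the subalgebra $\mathbf T\leq(\alg\mathbb A)^m$ generated by $\{\mathbf r_1,\dots,\mathbf r_n\}$ together with the diagonal elements $(a,\dots,a)$ for $a\in A$. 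By construction, every element $\mathbf u\in\mathbf T$ is $t(\mathbf r_1,\dots,\mathbf r_n,\dots)$ for some term operation $t$ and some choice of constant generators, and projecting onto coordinate $j$ recovers precisely $t(\mathbf s^j,\dots)\in A$. Since $S$ is a subuniverse, this projected value lies in $\pi_j(S)$ whenever we read off the whole $j$-th row of the witness tuple.

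The main obstacle is to turn this picture into a bona fide pp-formula whose solution set is exactly $S$ rather than something strictly larger. I would proceed via the standard dualization route. Let $S^*$ be the intersection of all relations pp-definable from $\mathbb A$ with constants that contain $S$; because $A$ is finite there are only finitely many $n$-ary relations on $A$, so $S^*$ is itself pp-definable-with-constants (a finite conjunction) and, by the forward direction, a subuniverse of $(\alg\mathbb A)^n$ containing $S$. Suppose for contradiction that some $\mathbf b\in S^*\setminus S$ existed. I would then build a partial operation of arity $m$ on $A$ sending $\mathbf r_i\mapsto b_i$ and $(a,\dots,a)\mapsto a$, and extend it throughout $A^m$ along the generation of $\mathbf T$; the crux is to verify that this extension is an \emph{idempotent polymorphism} of $\mathbb A$, which is where one uses that every pp-formula with constants satisfied on $S$ is also satisfied at $\mathbf b$, together with the closure of $S$ under $\IdPol(\mathbb A)$. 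Applying this polymorphism rowwise to $\mathbf s^1,\dots,\mathbf s^m\in S$ would then yield $\mathbf b\notin S$, contradicting $S\leq(\alg\mathbb A)^n$. Verifying that the candidate operation does preserve each relation of $\mathbb A$ is the technical step where the full strength of the hypothesis is consumed.
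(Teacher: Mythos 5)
The paper does not prove this lemma itself; it cites Proposition 2.21 of Bulatov, Jeavons and Krokhin, so there is no in-paper argument to compare against. Judged on its own merits, your forward direction is correct and standard. The backward direction, however, contains a genuine gap: the crucial construction of an $m$-ary idempotent polymorphism $f$ with $f(\mathbf r_i)=b_i$ is not carried out, and the route you indicate does not obviously furnish one. ``Extending along the generation of $\mathbf T$'' produces, at best, a map defined on $\mathbf T$, and even that requires a well-definedness argument that you never flag (an element of $\mathbf T$ may arise from the generators in many ways via different term operations, and the assigned values must agree). But a polymorphism of $\mathbb A$ is by definition a \emph{total} $m$-ary operation, i.e., a homomorphism from $\mathbb A^m$ to $\mathbb A$, and $\mathbf T$ is in general a proper subset of $A^m$. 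Passing from a compatible partial map on $\mathbf T$ to a total compatible operation on $A^m$ is precisely the content of the nontrivial implication, and it is exactly the step your sketch postpones to ``the technical step where the full strength of the hypothesis is consumed'' without indicating how to do it.

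The classical proof sidesteps both the dualization $S^*$ and the contradiction argument. One writes down the canonical pp-formula $\varphi$ whose existentially quantified variables $y_{\mathbf u}$ are indexed by all of $A^m$ (not just by the generated subalgebra $\mathbf T$), identifies the free variables $x_i$ with $y_{\mathbf r_i}$, and takes as conjuncts every atomic constraint $R(y_{\mathbf u^1},\dots,y_{\mathbf u^k})$ that holds coordinatewise in $\mathbb A^m$, together with the constant constraints $y_{(a,\dots,a)}=a$. A satisfying assignment for $\varphi$ is, by construction, an $m$-ary idempotent polymorphism of $\mathbb A$. One then shows $S=\{\mathbf b : \mathbf b\models\varphi\}$ directly: the inclusion $S\subseteq\{\mathbf b:\mathbf b\models\varphi\}$ is witnessed by the projections $\pi_j$ (which are idempotent), and the reverse inclusion follows by applying any satisfying assignment, read as an $m$-ary idempotent polymorphism, coordinatewise to the columns $\mathbf s^1,\dots,\mathbf s^m\in S$ and invoking $S\leq(\alg\mathbb A)^n$. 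If you push your $S^*$-based sketch to completion you will find yourself reconstructing exactly this formula, so the detour buys nothing; I would simply prove the equality $S=\{\mathbf b:\mathbf b\models\varphi\}$ directly.
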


The connection between universal algebra and constraint satisfaction is discussed in detail in \cite{bulatov_classifying_2005,bulatov_recent_2008,barto_absorption_2017}.

\section{Special trees \& the main result} \label{section:result}

In this section, we define special trees and state the main result of this paper.

\begin{definition}
An oriented path $\mathbb P$ with initial vertex $a$ and terminal vertex $b$ is \emph{minimal} if $\lvl(a)=0$, $\lvl(b)=\hgt(\mathbb P)$, and $0<\lvl(v)<\hgt(\mathbb P)$ for every $v\in P\setminus\{a,b\}$.
\end{definition}

Minimal paths have the property that their \emph{net length} (the number of forward edges minus the number of backward edges) is strictly greater than the net length of any of their subpaths. An example of a minimal path is depicted in Figure \ref{figure:minimal_path} below.

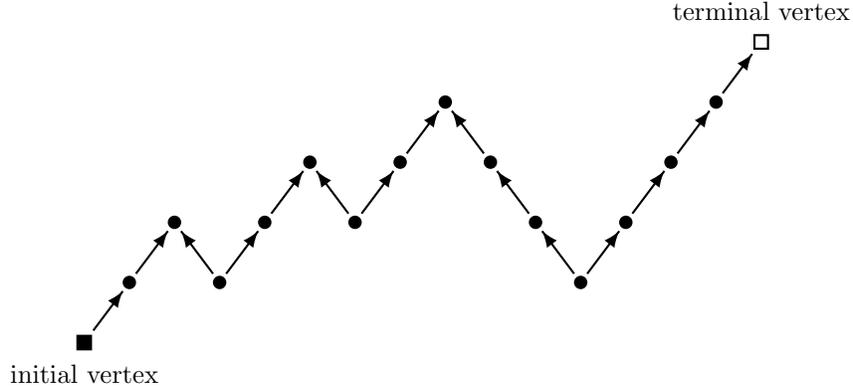
\begin{figure}[ht!]\centering

\begin{tikzpicture}
    [thick,on grid,node distance=.8cm and .6cm,
    dot/.style={circle,draw,outer sep=2pt,inner sep=0pt,minimum size=1.5mm,fill},
    blacksquare/.style={rectangle,draw,outer sep=2pt,inner sep=0pt,minimum size=1.8mm,fill},
    whitesquare/.style={rectangle,draw,outer sep=2pt,inner sep=0pt,minimum size=1.8mm},
    decoration={markings,mark=at position 1 with {\arrow[scale=1.2,black]{latex}};}
]

\node [blacksquare,label={south:\text{initial vertex}}] (a) at (0,0) {};
\node [dot,above right=of a] (b)  {};
\node [dot,above right=of b] (c)  {};
\node [dot,below right=of c] (d)  {};
\node [dot,above right=of d] (e)  {};
\node [dot,above right=of e] (f)  {};
\node [dot,below right=of f] (g)  {};
\node [dot,above right=of g] (h)  {};
\node [dot,above right=of h] (i)  {};
\node [dot,below right=of i] (j)  {};
\node [dot,below right=of j] (k)  {};
\node [dot,below right=of k] (l)  {};
\node [dot,above right=of l] (m)  {};
\node [dot,above right=of m] (n)  {};
\node [dot,above right=of n] (o)  {};
\node [whitesquare,above right=of o,label={north:\text{terminal vertex}}] (p)  {};
\draw[postaction={decorate}] (a) -- (b);
\draw[postaction={decorate}] (b) -- (c);
\draw[postaction={decorate}] (d) -- (c);
\draw[postaction={decorate}] (d) -- (e);
\draw[postaction={decorate}] (e) -- (f);
\draw[postaction={decorate}] (g) -- (f);
\draw[postaction={decorate}] (g) -- (h);
\draw[postaction={decorate}] (h) -- (i);
\draw[postaction={decorate}] (j) -- (i);
\draw[postaction={decorate}] (k) -- (j);
\draw[postaction={decorate}] (l) -- (k);
\draw[postaction={decorate}] (l) -- (m);
\draw[postaction={decorate}] (m) -- (n);
\draw[postaction={decorate}] (n) -- (o);
\draw[postaction={decorate}] (o) -- (p);
\end{tikzpicture}

\caption{A minimal path.}\label{figure:minimal_path}
\end{figure}

We will need the following well-known fact. A proof can be found in \cite{haggkvist_multiplicative_1988}.

\begin{lemma} \label{lemma:minpaths}
Let $\mathbb P_1,\mathbb P_2,\dots\mathbb P_k$ be minimal paths of the same height $h$. There exists a minimal path $\mathbb Q$ of height $h$ such that for every $i\in[k]$ there exists an onto homomorphism $\mathbb Q\to\mathbb P_i$.
\end{lemma}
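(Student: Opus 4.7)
The natural plan is to induct on $k$. The base case $k=1$ is immediate (take $\mathbb{Q} = \mathbb{P}_1$). For the inductive step it suffices to handle $k=2$: given minimal paths $\mathbb{P}, \mathbb{P}'$ of height $h$, produce a single minimal path $\mathbb{R}$ of height $h$ admitting onto homomorphisms to both; then combine $\mathbb{R}$ with $\mathbb{P}_{k+1}$ and recurse.

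The workspace is the categorical product $\mathbb{P} \times \mathbb{P}'$, which is a balanced digraph whose level function agrees with the projection to either factor. Write $a,a'$ for the initial vertices and $b,b'$ for the terminal vertices of $\mathbb{P},\mathbb{P}'$. The heart of the argument is the claim that $(a,a')$ and $(b,b')$ lie in the same weakly connected component of $\mathbb{P}\times\mathbb{P}'$. Once this is established, pick any walk $W$ between them and trim it to a minimal oriented path $\mathbb{R}$ of height $h$: first truncate from the last visit to level~$0$ and to the first visit to level~$h$, then iteratively remove closed sub-walks to obtain a simple walk. Because the trimming is level-preserving and leaves $0<\lvl(v)<h$ strictly in the interior, the result is a minimal path of height $h$, whose two coordinate projections are homomorphisms $\mathbb{R}\to\mathbb{P}$ and $\mathbb{R}\to\mathbb{P}'$.

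The main obstacle is the connectivity claim, because a walk in a categorical product requires a simultaneous forward or backward step in both factors, and the two minimal paths may follow very different patterns of orientations. To handle this, I would encode each $\mathbb{P}_i$ by the word $w_i\in\{+,-\}^\ast$ recording the directions of its edges in order; minimality of height~$h$ corresponds to the combinatorial condition that the prefix sums of $w_i$ stay strictly between $0$ and $h$ except at the endpoints, where they take the values $0$ and $h$. Existence of a walk from $(a,a')$ to $(b,b')$ in the product is then equivalent to producing a common synchronised traversal: a word $u$ together with walks in $\mathbb{P}$ and $\mathbb{P}'$ from $a$ to $b$ and from $a'$ to $b'$ whose orientation patterns both equal $u$. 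One builds $u$ greedily, advancing in both factors whenever possible and otherwise splicing in local backtracking detours on the lagging side; termination is forced by the common algebraic length $h$. This is essentially the construction of H\"aggkvist and Hell \cite{haggkvist_multiplicative_1988}.

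Finally, to secure the \emph{onto} condition: if the projection $\mathbb{R}\to\mathbb{P}$ misses some vertex $v\in P$, feed the pair $(\mathbb{R},\mathbb{P})$ back into the construction, choosing walks through $v$ when building $u$; repeat over all missed vertices, and likewise for $\mathbb{P}'$. Since each iteration strictly enlarges the image in at least one factor and the factors are finite, the process terminates with a minimal path projecting onto both $\mathbb{P}$ and $\mathbb{P}'$. The trimming and iteration arguments are essentially bookkeeping; the only genuinely delicate step is the synchronised traversal construction sketched above.
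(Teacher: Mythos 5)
The paper gives no proof of this lemma; it is deferred to \cite{haggkvist_multiplicative_1988}, so there is no internal argument to compare against. Your overall plan --- reduce to $k=2$, work in the categorical product $\mathbb P\times\mathbb P'$, find a walk from $(a,a')$ to $(b,b')$, truncate and project --- is precisely the standard argument behind that citation, and the ``synchronised traversal'' step is the whole mathematical content of the lemma; you delegate it to H\"aggkvist--Hell rather than proving it, which is legitimate and matches what the paper does, so the skeleton is fine as exposition (though not self-contained).

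What is not fine is the iterative repair for ontoness. When you re-run the construction on $(\mathbb R,\mathbb P)$ routing through a missed vertex $v\in P$, nothing keeps the new walk passing through the other vertices of $P$ already in the image of $\mathbb R$, and the homomorphism to $\mathbb P'$ now factors through the projection to the $\mathbb R$-factor, which need not be onto, so the composite image in $\mathbb P'$ can also lose vertices. Hence the claim that ``each iteration strictly enlarges the image in at least one factor'' is unjustified, and the loop may cycle without ever reaching a path onto both factors. The usual fix is to observe that the traversal argument actually yields something stronger: every pair $(u,u')$ with $\lvl(u)=\lvl(u')$ lies in the same weak component of $\mathbb P\times\mathbb P'$ as $(a,a')$ and $(b,b')$. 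One can then build a single walk from $(a,a')$ to $(b,b')$ visiting all such pairs (splice together round trips $(a,a')\to(b,b')\to(a,a')\to\cdots$, one per pair) and take its unfolding to an abstract oriented path, rather than trimming to a simple walk (trimming also risks discarding image vertices). The two coordinate projections of that unfolded path are then onto by construction.
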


We are now ready to define \emph{special} oriented trees.

\begin{definition}
Let $\mathbb T=(T;E)$ be an oriented tree of height 1. A \emph{$\mathbb T$-special tree} of height $h$ is an oriented tree $\mathbb H$ obtained from $\mathbb T$ by replacing every edge $(a,b)\in E$ with some minimal path $\mathbb P_{(a,b)}$ of height $h$, preserving orientation. (That is, identifying the initial vertex of $\mathbb P_{(a,b)}$ with  $a$ and the terminal vertex with $b$. We require the vertex sets of the minimal paths to be pairwise disjoint and also disjoint with $T$.) We will sometimes refer to $\mathbb T$ as the \emph{underlying tree structure} of the $\mathbb T$-special tree $\mathbb H$.

Throughout the paper we will denote the bottom and top levels of $\mathbb H$ by $A$ and $B$, respectively (that is, we have that $T=A\mathbin{\dot{\cup}}B$ and $E\subseteq A\times B$). In figures we mark vertices from $A$ and $B$ by \begin{tikzpicture}
[thick,
blacksquare/.style={rectangle,draw,outer sep=2pt,inner sep=0pt,minimum size=1.8mm,fill}]
\node [blacksquare] {};
\end{tikzpicture}
and
\begin{tikzpicture}
[thick,
whitesquare/.style={rectangle,draw,outer sep=2pt,inner sep=0pt,minimum size=1.8mm}]
\node [whitesquare] {};
\end{tikzpicture}; for other vertices of $\mathbb H$ we use the symbol $\bullet$.

We will use the notation ``$(a,b)\in E$'' to denote edges in the underlying tree structure $\mathbb T$ (which correspond to the minimal paths $\mathbb P_{(a,b)}$ in $\mathbb H$). For edges of the $\mathbb T$-special tree we write ``$x\myrightarrow y$ in $\mathbb H$''. In figures we use
\begin{tikzpicture}
[thick, node distance = 0.8cm,
dot/.style={circle,draw,outer sep=2pt,inner sep=0pt,minimum size=1.5mm,fill},
decoration={markings,mark=at position 1 with {\arrow[scale=1.2,black]{latex}};}]
\node [dot] (0) at (0,0) {};
\node [dot,right =of 0] (1) {};
\draw[postaction={decorate}] (0) -- (1);
\end{tikzpicture}
to depict edges of $\mathbb H$ and
\begin{tikzpicture}
[thick, node distance = 0.8cm,
blacksquare/.style={rectangle,draw,outer sep=2pt,inner sep=0pt,minimum size=1.8mm,fill},whitesquare/.style={rectangle,draw,outer sep=2pt,inner sep=0pt,minimum size=1.8mm},>=latex]
\node [blacksquare] (0) at (0,0) {};
\node [whitesquare,right =of 0] (1)  {};
\draw[->] [decorate,decoration={snake,segment length=1.5mm,amplitude=0.3mm,post length = 2mm}] (0) -- (1);
\end{tikzpicture}
for edges of the underlying tree structure.

\begin{itemize}
\item A \emph{special triad} (as defined in \cite{barto_csp_2009}) is a $\mathbb T$-special tree with underlying tree structure

\begin{tikzpicture}
[thick, on grid, node distance=1.2cm and 2cm,
blacksquare/.style={rectangle,draw,outer sep=2pt,inner sep=0pt,minimum size=1.8mm,fill},whitesquare/.style={rectangle,draw,outer sep=2pt,inner sep=0pt,minimum size=1.8mm},>=latex]
\node [blacksquare] (0) at (0,0) {};
\node [whitesquare,above left=of 0] (1)  {};
\node [whitesquare,above =of 0] (2)  {};
\node [whitesquare,above right=of 0] (3)  {};
\node [blacksquare,above =of 1] (4)  {};
\node [blacksquare,above =of 2] (5)  {};
\node [blacksquare,above =of 3] (6)  {};
\draw[->] [decorate,decoration={snake,segment length=1.5mm,amplitude=0.3mm,post length = 2mm}] (0) -- (1);
\draw[->] [decorate,decoration={snake,segment length=1.5mm,amplitude=0.3mm,post length = 2mm}] (0) -- (2);
\draw[->] [decorate,decoration={snake,segment length=1.5mm,amplitude=0.3mm,post length = 2mm}] (0) -- (3);
\draw[->] [decorate,decoration={snake,segment length=1.5mm,amplitude=0.3mm,post length = 2mm}] (4) -- (1);
\draw[->] [decorate,decoration={snake,segment length=1.5mm,amplitude=0.3mm,post length = 2mm}] (5) -- (2);
\draw[->] [decorate,decoration={snake,segment length=1.5mm,amplitude=0.3mm,post length = 2mm}] (6) -- (3);
\node [left=of 1] (T) {$\mathbb T=$};
\end{tikzpicture}

\item A \emph{special polyad} (as defined in \cite{barto_csp_2013}) is a $\mathbb T$-special tree with underlying tree structure

\begin{tikzpicture}
  [thick, on grid, node distance=1.2cm and 1.5cm,
    blacksquare/.style={rectangle,draw,outer sep=2pt,inner sep=0pt,minimum size=1.8mm,fill},
    whitesquare/.style={rectangle,draw,outer sep=2pt,inner sep=0pt,minimum size=1.8mm},>=latex]
\node [blacksquare] (0) at (0,0) {};
\node [above=of 0] (a) {\dots};
\node [above=of a] {\dots};
\node [whitesquare,left=of a] (2)  {};
\node [blacksquare,above=of 2] (7)  {};
\node [whitesquare,left=of 2] (1)  {};
\node [blacksquare,above=of 1] (6) {};
\node [whitesquare,right=of a] (3) {};
\node [blacksquare,above=of 3] (8)  {};
\node [whitesquare,right=of 3] (4) {};
\node [right=of 4] (b) {\dots};
\node [whitesquare,right=of b] (5)  {};
\draw[->] [decorate,decoration={snake,segment length=1.5mm,amplitude=0.3mm,post length = 2mm}] (0) -- (1);
\draw[->] [decorate,decoration={snake,segment length=1.5mm,amplitude=0.3mm,post length = 2mm}] (0) -- (2);
\draw[->] [decorate,decoration={snake,segment length=1.5mm,amplitude=0.3mm,post length = 2mm}] (0) -- (3);
\draw[->] [decorate,decoration={snake,segment length=1.5mm,amplitude=0.3mm,post length = 2mm}] (0) -- (4);
\draw[->] [decorate,decoration={snake,segment length=1.5mm,amplitude=0.3mm,post length = 2mm}] (0) -- (5);
\draw[->] [decorate,decoration={snake,segment length=1.5mm,amplitude=0.3mm,post length = 2mm}] (6) -- (1);
\draw[->] [decorate,decoration={snake,segment length=1.5mm,amplitude=0.3mm,post length = 2mm}] (7) -- (2);
\draw[->] [decorate,decoration={snake,segment length=1.5mm,amplitude=0.3mm,post length = 2mm}] (8) -- (3);
\node [left=of 1] {$\mathbb T=$};
\end{tikzpicture}

\item A \emph{special tree} is simply a $\mathbb T$-special tree for an arbitrary height 1 oriented tree $\mathbb T$.
\end{itemize}
\end{definition}

As an example, in Figure \ref{figure:NPc_triad} below we present a special triad constructed in \cite{barto_csp_2009} that gives rise to an \textbf{NP}-complete $\mathbb H$-coloring problem (and is conjectured to be the smallest oriented tree with this property).

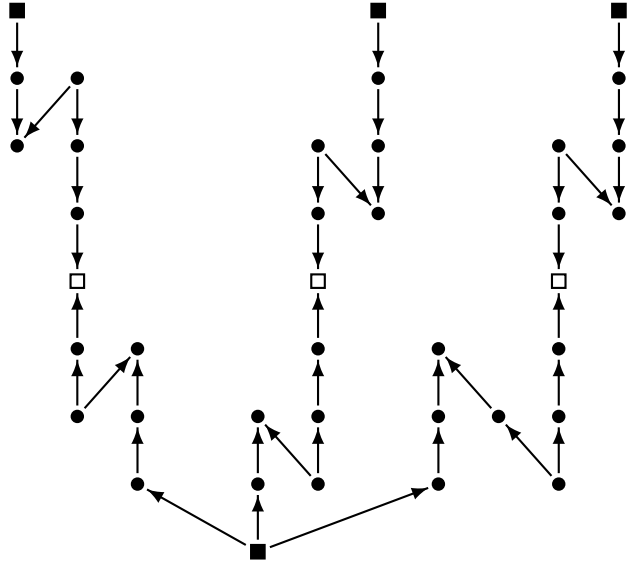
\begin{figure}[ht!]
\centering
\begin{tikzpicture}
[thick,on grid,xscale=0.8,yscale=0.9,
dot/.style={circle,draw,outer sep=2pt,inner sep=0pt,minimum size=1.5mm,fill},blacksquare/.style={rectangle,draw,outer sep=2pt,inner sep=0pt,minimum size=1.8mm,fill},whitesquare/.style={rectangle,draw,outer sep=2pt,inner sep=0pt,minimum size=1.8mm},
decoration={markings,mark=at position 1 with {\arrow[scale=1.2,black]{latex}};}]
\node [blacksquare] (0) at (0,0) {};
\node [dot] (1) at (-2,1) {};
\node [dot] (2) at (-2,2) {};
\node [dot] (3) at (-2,3) {};
\node [dot] (4) at (-3,2) {};
\node [dot] (5) at (-3,3) {};
\node [whitesquare] (6) at (-3,4) {};
\node [dot] (7) at (-3,5) {};
\node [dot] (8) at (-3,6) {};
\node [dot] (9) at (-3,7) {};
\node [dot] (10) at (-4,6) {};
\node [dot] (11) at (-4,7) {};
\node [blacksquare] (12) at (-4,8) {};
\draw[postaction={decorate}] (0) -- (1);
\draw[postaction={decorate}] (1) -- (2);
\draw[postaction={decorate}] (2) -- (3);
\draw[postaction={decorate}] (4) -- (3);
\draw[postaction={decorate}] (4) -- (5);
\draw[postaction={decorate}] (5) -- (6);
\draw[postaction={decorate}] (7) -- (6);
\draw[postaction={decorate}] (8) -- (7);
\draw[postaction={decorate}] (9) -- (8);
\draw[postaction={decorate}] (9) -- (10);
\draw[postaction={decorate}] (11) -- (10);
\draw[postaction={decorate}] (12) -- (11);

\node [dot] (a1) at (0,1) {};
\node [dot] (a2) at (0,2) {};
\node [dot] (a3) at (1,1) {};
\node [dot] (a4) at (1,2) {};
\node [dot] (a5) at (1,3) {};
\node [whitesquare] (a6) at (1,4) {};
\node [dot] (a7) at (1,5) {};
\node [dot] (a8) at (1,6) {};
\node [dot] (a9) at (2,5) {};
\node [dot] (a10) at (2,6) {};
\node [dot] (a11) at (2,7) {};
\node [blacksquare] (a12) at (2,8) {};
\draw[postaction={decorate}] (0) -- (a1);
\draw[postaction={decorate}] (a1) -- (a2);
\draw[postaction={decorate}] (a3) -- (a2);
\draw[postaction={decorate}] (a3) -- (a4);
\draw[postaction={decorate}] (a4) -- (a5);
\draw[postaction={decorate}] (a5) -- (a6);
\draw[postaction={decorate}] (a7) -- (a6);
\draw[postaction={decorate}] (a8) -- (a7);
\draw[postaction={decorate}] (a8) -- (a9);
\draw[postaction={decorate}] (a10) -- (a9);
\draw[postaction={decorate}] (a11) -- (a10);
\draw[postaction={decorate}] (a12) -- (a11);

\node [dot] (b1) at (3,1) {};
\node [dot] (b2) at (3,2) {};
\node [dot] (b3) at (3,3) {};
\node [dot] (b4) at (4,2) {};
\node [dot] (b5) at (5,1) {};
\node [dot] (b6) at (5,2) {};
\node [dot] (b7) at (5,3) {};
\node [whitesquare] (b8) at (5,4) {};
\node [dot] (aa7) at (5,5) {};
\node [dot] (aa8) at (5,6) {};
\node [dot] (aa9) at (6,5) {};
\node [dot] (aa10) at (6,6) {};
\node [dot] (aa11) at (6,7) {};
\node [blacksquare] (aa12) at (6,8) {};
\draw[postaction={decorate}] (0) -- (b1);
\draw[postaction={decorate}] (b1) -- (b2);
\draw[postaction={decorate}] (b2) -- (b3);
\draw[postaction={decorate}] (b4) -- (b3);
\draw[postaction={decorate}] (b5) -- (b4);
\draw[postaction={decorate}] (b5) -- (b6);
\draw[postaction={decorate}] (b6) -- (b7);
\draw[postaction={decorate}] (b7) -- (b8);
\draw[postaction={decorate}] (aa7) -- (b8);
\draw[postaction={decorate}] (aa8) -- (aa7);
\draw[postaction={decorate}] (aa8) -- (aa9);
\draw[postaction={decorate}] (aa10) -- (aa9);
\draw[postaction={decorate}] (aa11) -- (aa10);
\draw[postaction={decorate}] (aa12) -- (aa11);
\end{tikzpicture}
\caption{A special triad; the smallest known oriented tree with \textbf{NP}-complete $\mathbb H$-coloring problem (39 vertices).}
\label{figure:NPc_triad}
\end{figure}

The following theorem is the main algebraic result of our paper.

\begin{theorem} \label{theorem:main}
Let $\mathbb H$ be a special tree. If the algebra of idempotent polymorphisms of $\mathbb H$ is Taylor, then it is congruence meet-semidistributive.
\end{theorem}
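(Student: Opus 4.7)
The plan is to start from the cyclic polymorphisms provided by the Taylor hypothesis (via the Barto--Kozik cyclic theorem), use the rigidity of minimal paths to show that such polymorphisms are essentially determined by their action on the underlying height-$1$ tree $\mathbb T$, and then deduce from this the pointing-type identities characterizing congruence meet-semidistributivity in \cite{barto_maltsev_2013}.

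First come some preliminary reductions. Without loss of generality $\mathbb H$ is a core. Since every edge of $\mathbb H$ raises the level by one, any polymorphism of $\mathbb H$ preserves $\lvl$, so each level set is a subalgebra of $\alg(\mathbb H)$. The union of the bottom and top levels is exactly the vertex set of $\mathbb T$, and the edge relation of $\mathbb T$ is pp-definable in $\mathbb H$ as ``connected by a minimal path of height $h$''; hence every polymorphism of $\mathbb H$ restricts to a polymorphism of $\mathbb T$. The Taylor assumption combined with the Barto--Kozik theorem then yields a cyclic polymorphism of $\alg(\mathbb H)$ of every sufficiently large prime arity.

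The main step is to analyse such a cyclic polymorphism $c$. Given an input tuple of $\mathbb T$-edges $(a_1,b_1),\dots,(a_n,b_n)$ with associated minimal paths $\mathbb P_{(a_i,b_i)}$, Lemma \ref{lemma:minpaths} supplies a common minimal cover $\mathbb Q$ together with surjective homomorphisms $\varphi_i\colon\mathbb Q\to\mathbb P_{(a_i,b_i)}$. Applying $c$ coordinatewise along this cover produces a homomorphism from $\mathbb Q$ into $\mathbb H$ whose image is a minimal path from $c(\bar a)$ to $c(\bar b)$. This common-cover construction is canonical enough that the action of $c$ on the intermediate vertices is essentially determined by its action on $\mathbb T$ together with the cyclic-factorization combinatorics of $\mathbb Q$. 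I would then use this rigidity to construct, by induction on arity, term operations of $\alg(\mathbb H)$ witnessing the pointing identities of \cite{barto_maltsev_2013}.

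The principal obstacle is the absorption step: ruling out an abelian congruence in a hypothetical non-meet-semidistributive minimal quotient of $\alg(\mathbb H)$. For this I would appeal to the absorption theory of \cite{barto_absorbing_2012}: such a minimal quotient would carry a proper absorbing subuniverse on the $\mathbb T$-level, whose lift to $\mathbb H$ is forced---by the strict net-length property of minimal paths (see Figure \ref{figure:minimal_path})---to be incompatible with the orientation of the chosen paths $\mathbb P_{(a,b)}$. This is the technical heart of the argument: it generalizes the analogous absorption steps in \cite{barto_csp_2009,barto_csp_2013} (special triads and polyads, where $\mathbb T$ has only a single branching vertex), and the new challenge here is to handle an arbitrary height-$1$ tree $\mathbb T$.
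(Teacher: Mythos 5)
Your proposal has a foundational error in the middle step, and the absorption step is too vague to be a proof. The claim that polymorphisms of $\mathbb H$ are ``essentially determined by their action on $\mathbb T$'' is false: many polymorphisms of $\mathbb H$ agree on $A\cup B$ yet differ on the interior of the minimal paths, and a given polymorphism of $\mathbb T$ need not extend to $\mathbb H$ at all, nor canonically when it does. The correct and much weaker statement, which is precisely what the paper proves (Lemma \ref{lemma:extending_WNUs} and Corollary \ref{corollary:reduction_to_top_and_bottom}), is that a polymorphism which is a WNU on $A$ \emph{and} on $B$ can be \emph{modified} to a WNU polymorphism on all of $H$ -- and that modification requires a delicate case analysis (diagonal vs.\ non-diagonal components, $\sqsubseteq$-minima, permuting a distinguished coordinate), precisely because the extension is not unique or rigid. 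The ``rigidity'' of minimal paths gives pp-definability of $A$, $B$, $E$ and connectivity of $A^n\cup B^n$ in $\mathbb H^n$, nothing more. Starting from cyclic rather than WNU polymorphisms would also cost you the special binary polymer identity $x\mathbin{\circ}(x\mathbin{\circ}y)=x\mathbin{\circ}y$ that the paper's Lemmas \ref{lemma:WNUabsorbing_E2o} and \ref{lemma:singleton_absorbing} rely on.

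The claimed ``technical heart'' -- ruling out an abelian congruence in a hypothetical non-$\mathrm{SD}(\wedge)$ minimal quotient by saying an absorbing subuniverse on the $\mathbb T$-level is ``incompatible with the orientation'' -- is not an argument, and it is not the paper's route. The paper never contemplates minimal quotients. Instead it explicitly \emph{constructs} a singleton absorbing subuniverse $\{o\}\abs A$ via the special WNU by a maximal-sequence / distance argument in $\mathbb T$ (Lemma \ref{lemma:WNUabsorbing_E2o}), then shows that absorption-free subuniverses of $A$ or $B$ sit at a single $E$-distance from $o$ (Lemma \ref{lemma:AF_has_single_distance}), then handles $E$-neighbourhoods $E_-(b)$ of singletons by building a rich supply of binary polymorphisms from iterated compositions of $\star$ (Lemmas \ref{lemma:star_absorbs}--\ref{lemma:C_has_pointing}), and finally does an induction on $\mathrm{dist}_E(\cdot,o)$ using the onto map $\eta\colon C\to D\leq E_+(C)$ induced by $E$ (Lemma \ref{lemma:every_AF_of_A_has_pointing}). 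None of this structure is present in your sketch; without it you have no mechanism to produce the pointing operations and hence no access to Corollary \ref{corollary:every_AF_has_pointing}.
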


As a consequence, we confirm the dichotomy of $\mathbb H$-coloring for special trees.

\begin{corollary} \label{corollary:dichotomy}
The CSP dichotomy conjecture holds for special trees. For any core special tree $\mathbb H$, $\mathrm{CSP}(\mathbb H)$ is \textbf{NP}-complete or $\mathbb H$ has bounded width.
\end{corollary}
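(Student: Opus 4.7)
The plan is to derive Corollary \ref{corollary:dichotomy} from Theorem \ref{theorem:main} by combining it with two cornerstones of the algebraic approach to CSP. Since $\mathrm{CSP}(\mathbb H)$ is polynomially equivalent to $\mathrm{CSP}(\mathbb H')$ where $\mathbb H'$ is the core of $\mathbb H$, it suffices to prove the dichotomy for a core special tree $\mathbb H$; I will write $\mathbf A=(H;\IdPol(\mathbb H))$ for its algebra of idempotent polymorphisms.

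The argument then splits on whether $\mathbf A$ is Taylor. If $\mathbf A$ is \emph{not} Taylor, then the algebraic hardness result descending from Bulatov, Jeavons and Krokhin \cite{bulatov_classifying_2005} (in its Taylor formulation, equivalent for core structures to the absence of a Siggers or weak near-unanimity term) produces a pp-interpretation in $\mathbb H$ of a $2$-element structure with \textbf{NP}-complete CSP, so $\mathrm{CSP}(\mathbb H)$ is \textbf{NP}-complete. If $\mathbf A$ \emph{is} Taylor, then Theorem \ref{theorem:main} tells us that $\mathbf A$ is congruence meet-semidistributive, and the Barto--Kozik bounded width theorem \cite{barto_constraint_2014}, which settled the Larose--Z\'adori conjecture \cite{larose_bounded_2007}, applies to give that $\mathbb H$ has bounded width; in particular $\mathrm{CSP}(\mathbb H)\in\textbf{P}$.

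I do not expect any genuine obstacle here: essentially all of the combinatorial content lives in Theorem \ref{theorem:main} itself. The only bookkeeping is to confirm that passage to the core behaves well, which is standard: a core is a retract of $\mathbb H$, so $\IdPol$ of the core is obtained by restriction, and both the Taylor and the congruence meet-semidistributivity properties transfer along this restriction, yielding the two cases for $\mathbb H$ itself. Combining the two cases gives the corollary.
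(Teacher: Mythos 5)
Your proof takes essentially the same route as the paper: split on whether $\alg\mathbb H$ is Taylor, apply Theorem~\ref{theorem:noWNU_NPc} in the negative case and Theorem~\ref{theorem:main} plus the Bounded Width Theorem (Theorem~\ref{theorem:bounded_width}) in the positive case. The one flaw is in your final ``bookkeeping'' paragraph: $\IdPol$ of the core is \emph{not} simply ``obtained by restriction'' (restricting an idempotent polymorphism of $\mathbb H$ to the core need not even land inside the core, and conversely not every idempotent polymorphism of the core extends idempotently to $\mathbb H$), so the attempted transfer of Taylor/$\mathrm{SD}(\wedge)$ along the retraction is not a correct argument. What is actually needed --- and what the paper records instead --- is the much simpler observation that the core of a special tree is itself a special tree, so that Theorem~\ref{theorem:main} applies directly to the core; once you've said ``it suffices to prove this for a core special tree'' that observation is the justification, and the retraction/restriction paragraph should be dropped.
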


We will prove Theorem \ref{theorem:main} and Corollary \ref{corollary:dichotomy} in Section \ref{section:proof}.

\section{Algebraic tools} \label{section:algebraic_tools}

In this section, we introduce the universal algebraic tools we will use in our proof.
Recall that for a fixed relational structure $\mathbb A$, the \emph{Constraint satisfaction problem} for $\mathbb A$ is the membership problem for the set $\mathrm{CSP}(\mathbb A)=\{\mathbb X\mid \mathbb X\to\mathbb A\}$. Note that if $\mathbb A'$ is the core of $\mathbb A$, then $\mathrm{CSP}(\mathbb A)=\mathrm{CSP}(\mathbb A')$.

Of particular importance to the CSP are the following two well-known classes of finite algebras: \emph{Taylor} algebras (called ``active'' in \cite{bergman_universal_2011}) and \emph{congruence meet-semidistributive} ($\mathit{SD(\wedge)}$) algebras\footnote{Taylor and  $\mathrm{SD(\wedge)}$ algebras are also commonly referred to as ``omitting type 1'' and ``omitting types 1, 2''; this terminology comes from Tame Congruence Theory (see \cite[Chapter 8]{bergman_universal_2011}).}. Instead of providing direct definitions, we present the following characterization from \cite{maroti_existence_2008}.
\begin{definition}
A \emph{weak near-unanimity} (\emph{WNU}) operation on a set $A$ is an $n$-ary ($n\geq 2$) idempotent operation $\omega$ such that for all $x,y\in A$,
$$
\omega(x,\dots,x,y)=\omega(x,\dots,x,y,x)=\dots=\omega(y,x,\dots,x).
$$
\end{definition}

\begin{theorem}[\cite{maroti_existence_2008}] \label{theorem:WNU=Taylor_manyWNU=SDmeet}
Let $\mathbf A$ be a finite algebra.
\begin{enumerate}
\item $\mathbf A$ is Taylor, if and only if there exists a WNU operation $\omega\in\Clo(\mathbf A)$.
\item $\mathbf A$ is $\mathrm{SD}(\wedge)$, if and only if there exists $n_0$ such that for all $n\geq n_0$ there exists an $n$-ary WNU operation $\omega_n\in\Clo(\mathbf A)$.
\end{enumerate}
\end{theorem}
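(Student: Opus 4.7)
The plan is to handle the two characterizations separately, dispatching the easy forward implication in each case before tackling the converse. For the forward direction of the Taylor statement, suppose $\omega\in\Clo(\mathbf A)$ is an $n$-ary WNU. Then the identities
\[
\omega(y,x,\dots,x)=\omega(x,y,x,\dots,x)=\cdots=\omega(x,\dots,x,y)
\]
are Taylor identities: for each coordinate $i\in[n]$, equating the $i$th expression with the first exhibits an idempotent identity whose two sides disagree in position $i$ in exactly the way Taylor's definition requires. Hence the existence of a WNU suffices to witness Taylor-ness. The forward direction of the $\mathrm{SD}(\wedge)$ statement is similar in spirit but more delicate: having a WNU of every sufficiently large arity rules out the presence of an affine or semilattice operation in any subquotient of $\mathbf A$, since such an operation is incompatible with the WNU identities once $n$ is large. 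Translating this into the congruence-theoretic statement of $\mathrm{SD}(\wedge)$ is where I would either invoke Tame Congruence Theory (ruling out types $\mathbf{1}$ and $\mathbf{2}$ in subquotients) or, equivalently, derive from the sequence $\omega_n$ a chain of Jónsson-type terms.

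For the converse of the Taylor characterization I would first appeal to (or independently reprove) the Siggers term theorem, which guarantees that a Taylor algebra admits a $4$-ary term $s$ satisfying $s(a,r,e,a)=s(r,a,r,e)$. From $s$ one constructs a WNU by composing cyclic shifts: iterating compositions of $s$ with permuted arguments produces a term $t(x_1,\dots,x_k)$ for some $k$ that is forced, via chains of the Siggers identity, to satisfy $t(x,\dots,x,y,x,\dots,x)=t(y,x,\dots,x)$ regardless of the position of $y$. That is precisely the WNU condition.

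For the converse of the $\mathrm{SD}(\wedge)$ characterization I would use the classical characterization of $\mathrm{SD}(\wedge)$ by the existence of a finite chain of Jónsson (or Hobby--McKenzie) terms, and for each sufficiently large $n$ compose these terms with appropriately chosen projections to produce an $n$-ary WNU $\omega_n$. The length of the Jónsson chain dictates the threshold $n_0$, and once $n\geq n_0$ a single composition pattern works uniformly.

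The main obstacle is the symmetrization step in the Taylor converse: turning a single Siggers-type identity into full WNU symmetry across all $n$ coordinates simultaneously requires a careful combinatorial composition that preserves the required identities while keeping the resulting term in $\Clo(\mathbf A)$. Controlling the arity, orchestrating the chain of identifications, and ruling out collapse to a projection is the delicate combinatorial heart of the Maróti--McKenzie argument, and is the step most resistant to a short sketch.
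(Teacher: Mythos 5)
The paper does not prove this theorem; it is quoted verbatim as a known result of Mar\'oti and McKenzie \cite{maroti_existence_2008}, so there is no in-paper argument to compare against.

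Assessed on its own terms, your sketch has a genuine gap in the Taylor converse. You propose to derive a WNU from a Siggers term by ``composing cyclic shifts,'' but there is no known direct term-composition of this kind: the Siggers term theorem is logically \emph{downstream} of the WNU characterization (Siggers' 2010 proof, and the later Kearnes--Markovi\'c--McKenzie $4$-ary refinement, both rely on the Mar\'oti--McKenzie result), so invoking it here is anachronistic and at risk of circularity, and no amount of juggling cyclic shifts of a fixed $4$-ary term is known to force the full symmetry
$\omega(y,x,\dots,x)=\omega(x,y,x,\dots,x)=\cdots$
across an unbounded number of coordinates in a way that avoids collapse to a projection. The actual Mar\'oti--McKenzie argument is entirely different in character: it is specific to finite algebras, proceeds through the structure theory of idempotent reducts and (in effect) tame-congruence-theoretic analysis of minimal sets, and the arity of the resulting WNU depends on $|A|$ in an essential way --- none of which is visible in a purely equational composition scheme. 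The forward direction (WNU $\Rightarrow$ Taylor) is indeed the easy formal check you describe, and the $\mathrm{SD}(\wedge)$-via-J\'onsson-terms direction is plausible in outline, but the Taylor converse is precisely the hard content of the theorem and your sketch does not reach it.
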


The Algebraic CSP dichotomy conjecture (\cite{bulatov_classifying_2005}, see also \cite[Conjecture 1]{bulatov_recent_2008}) asserts that being Taylor is what
distinguishes (algebras of idempotent polymorphisms of) tractable core relational structures from the \textbf{NP}-complete ones; the hardness part is known.

\begin{theorem}[\cite{bulatov_classifying_2005}] \label{theorem:noWNU_NPc}
Let $\mathbb A$ be a core relational structure. If $\alg\mathbb A$ is not Taylor, then $\mathrm{CSP}(\mathbb A)$ is \textbf{NP}-complete.
\end{theorem}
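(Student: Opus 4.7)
The plan is to reduce a known NP-complete CSP (for instance, positive 1-in-3-SAT, i.e.\ $\mathrm{CSP}(\{0,1\}; R_{1/3})$ where $R_{1/3} = \{(1,0,0),(0,1,0),(0,0,1)\}$) to $\mathrm{CSP}(\mathbb A)$ in polynomial time, using the failure of the Taylor property to produce a primitive-positive interpretation of this template in $\mathbb A$. Since $\mathbb A$ is finite, $\mathrm{CSP}(\mathbb A)$ lies in \textbf{NP}, so such a reduction suffices. Throughout I will use the fact that by Lemma \ref{lemma:pp-definable=subuniverse}, pp-definable relations (with constants) are exactly subuniverses of powers of $\alg\mathbb A$, and that in a core every singleton $\{a\}$ is a subuniverse, so pp-definitions with constants are in fact pp-definitions.

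First I would translate the non-Taylor hypothesis into a usable structural fact. By Theorem \ref{theorem:WNU=Taylor_manyWNU=SDmeet}, non-Taylorness means that $\Clo(\alg\mathbb A)$ contains no weak near-unanimity operation. Via Taylor's original characterisation (equivalently, via the Hobby--McKenzie classification of locally finite idempotent varieties), this is exactly the statement that the variety generated by $\alg\mathbb A$ admits a two-element \emph{projection algebra} as a quotient of a subalgebra: concretely, there exist $B \le \alg\mathbb A$ and a congruence $\theta$ on $B$ with $|B/\theta|=2$ such that every term operation of $\alg\mathbb A$, when read on $B/\theta$, becomes a projection. I would cite this as the key ingredient from \cite{bulatov_classifying_2005}.

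Next I would use this two-element projection quotient to pp-interpret $(\{0,1\}; R_{1/3})$ in $\mathbb A$. The general scheme of a pp-interpretation of a structure $\mathbb D$ in $\mathbb A$ consists of a pp-definable set $U \le \alg\mathbb A^n$, a pp-definable equivalence relation $\sim$ on $U$ whose quotient is in bijection with $D$, and pp-definable preimages of the relations of $\mathbb D$. Here I would take $U = B^n$ (pp-definable because $B$ is, using that $B$ is the projection of a pp-definable binary relation encoding the subalgebra), use (a pp-definable relation whose quotient captures) $\theta^n$ for $\sim$, and realise $R_{1/3}$ as a preimage under the quotient map of any ternary relation on $\{0,1\}^n / \theta^n$: because all term operations act as projections on $B/\theta$, every relation on that quotient is preserved by the induced clone, and hence (after pulling back through $\sim$ and intersecting with $U^3$) is a subuniverse of $\alg\mathbb A^{3n}$, so by Lemma \ref{lemma:pp-definable=subuniverse} it is pp-definable. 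Standard machinery then shows that a pp-interpretation of $\mathbb D$ in $\mathbb A$ yields a log-space (in particular polynomial-time) reduction $\mathrm{CSP}(\mathbb D) \le \mathrm{CSP}(\mathbb A)$: each constraint of an instance of $\mathrm{CSP}(\mathbb D)$ gets rewritten using the pp-formulas, introducing auxiliary variables for the existential quantifiers.

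The main obstacle is the first structural step: passing from ``no WNU in $\Clo(\alg\mathbb A)$'' to the concrete subalgebra/quotient $B/\theta$ on which the induced clone collapses to projections. This is the content of the Bulatov--Jeavons--Krokhin theorem and ultimately rests on tame congruence theory (the existence, in any non-Taylor locally finite variety, of a ``type~\textbf{1}'' minimal algebra), which is nontrivial and I would import as a black box rather than reprove. The remaining two ingredients—realising $R_{1/3}$ from a projection clone and upgrading a pp-interpretation with constants to a polynomial reduction using coreness—are routine once the collapsed quotient is in hand, and NP-membership follows simply by guessing a homomorphism $\mathbb X \to \mathbb A$ and verifying it in polynomial time.
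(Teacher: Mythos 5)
This theorem is imported by the paper from \cite{bulatov_classifying_2005} without proof, so there is no internal argument to compare against; I can only measure your sketch against the standard proof in that reference, and you reproduce it faithfully at the level of architecture: translate non-Taylorness (no WNU, via Theorem~\ref{theorem:WNU=Taylor_manyWNU=SDmeet}) into the existence of a two-element projection-algebra quotient $B/\theta$ of a subalgebra of $\alg\mathbb A$ (the nontrivial ``type~\textbf{1}'' ingredient, which you rightly import as a black box), observe that on such a quotient every operation acts as a projection so \emph{every} relation is invariant and pulls back to a subuniverse of a power of $\alg\mathbb A$, invoke Lemma~\ref{lemma:pp-definable=subuniverse} to obtain a pp-interpretation of positive 1-in-3-SAT with constants, and turn that into a polynomial-time reduction. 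NP-membership is trivial, as you say.

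One step is misargued. You write that ``in a core every singleton $\{a\}$ is a subuniverse, so pp-definitions with constants are in fact pp-definitions.'' The premise is true but does not do the work you want: $\{a\}\leq\alg\mathbb A$ holds for \emph{every} structure $\mathbb A$, simply because $\alg\mathbb A$ consists of \emph{idempotent} polymorphisms, and by Lemma~\ref{lemma:pp-definable=subuniverse} it only says $\{a\}$ is pp-definable \emph{with} constants, which is a tautology. It does not follow that singletons are pp-definable from $\mathbb A$ alone. For instance $K_3$ is a core whose $\alg K_3$ is not Taylor, yet no singleton is pp-definable in $K_3$ since pp-definable relations are preserved by automorphisms and $\mathrm{Aut}(K_3)$ is transitive. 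Coreness enters at a different point: for a core $\mathbb A$, $\mathrm{CSP}(\mathbb A_c)$ (the expansion by all singleton relations) reduces in polynomial time to $\mathrm{CSP}(\mathbb A)$, by attaching a fresh disjoint copy of $\mathbb A$ to the instance, identifying each ``constant'' variable with its image in that copy, and using the fact that every endomorphism of a core is an automorphism to recover a homomorphism to $\mathbb A_c$. Your pp-interpretation gives NP-hardness of $\mathrm{CSP}(\mathbb A_c)$; this extra reduction step, not pp-definability of singletons, carries it down to $\mathrm{CSP}(\mathbb A)$.
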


A relational structure $\mathbb A$ is said to have \emph{bounded width} \cite{feder_computational_1999}, if $\mathrm{CSP}(\mathbb A)$ is solvable by ``local consistency checking'' algorithm (or rather algorithmic principle). We refer the reader to \cite{barto_constraint_2014} for a detailed exposition. This property is characterized (for cores) by congruence meet-semidistributivity; the characterization was conjectured, and the ``only if'' part proved, in \cite{larose_bounded_2007}.

\begin{theorem}[\cite{barto_constraint_2014},``Bounded Width Theorem''] \label{theorem:bounded_width}
A core relational structure $\mathbb A$ has bounded width (implying that $\mathrm{CSP}(\mathbb A)$ is in \textbf{P}), if and only if $\alg\mathbb A$ is $\mathrm{SD}(\wedge)$.
\end{theorem}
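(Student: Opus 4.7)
The plan is to split the equivalence into the easy ``only if'' direction and the substantive ``if'' direction.

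For the ``only if'' direction I would argue contrapositively. If $\alg\mathbb A$ fails $\mathrm{SD}(\wedge)$, then by Theorem~\ref{theorem:WNU=Taylor_manyWNU=SDmeet} there are arbitrarily large arities admitting no WNU in $\Clo(\alg\mathbb A)$. Using tame congruence theory one extracts a quotient of a subalgebra of $\alg\mathbb A$ whose clone interprets a nontrivial abelian group structure. Since $\mathbb A$ is a core, each element of $A$ is fixed by every operation in $\IdPol(\mathbb A)$, so one can primitive-positive-define with constants (Lemma~\ref{lemma:pp-definable=subuniverse}) relations encoding linear equations over some nontrivial finite abelian group. Systems of linear equations are the canonical example refuting bounded width: for every $k$ there exist unsatisfiable systems in which every $k$-variable subsystem is satisfiable. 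Encoding such a system as an instance of $\mathrm{CSP}(\mathbb A)$ contradicts bounded width. This half is essentially the argument of Larose and Z\'adori.

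The ``if'' direction is the true content of the Bounded Width Theorem. Assuming $\alg\mathbb A$ is $\mathrm{SD}(\wedge)$, the goal is to show that the $(2,3)$-consistency algorithm (after a suitable preprocessing that tightens each unary domain to a subuniverse of $\alg\mathbb A$) is sound and complete. Given an input $\mathbb X$, let $\mathcal I$ be the output: for each variable $x$ a current domain $D_x\leq\alg\mathbb A$, and for each pair $(x,y)$ a subuniverse $P_{xy}\leq D_x\times D_y$ consistent with every triangle. If any $P_{xy}$ becomes empty, reject. Otherwise I would show $\mathbb X\to\mathbb A$ by induction on $\sum_x|D_x|$. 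The inductive step rests on the central dichotomy supplied by absorption theory: for each variable $x$, either $D_x$ possesses a proper \emph{absorbing} subuniverse $D_x'\leq D_x$, or else every binary subuniverse $P_{xy}$ is \emph{linked}, meaning the connectivity-graph obtained from $P_{xy}$ on $D_x\cup D_y$ is connected.

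In the absorption case, replace $D_x$ by $D_x'$, verify that the restricted instance stays $(2,3)$-consistent (this is the ``absorption is preserved by projection'' property of Barto--Kozik), and recurse. In the linked case, the argument is harder and constitutes the main obstacle: one uses WNU operations of many arities, via Theorem~\ref{theorem:WNU=Taylor_manyWNU=SDmeet}, together with the loop lemma of Barto--Kozik for smooth digraphs in the $\mathrm{SD}(\wedge)$ setting, to deduce that some $D_x$ can be shrunk to a single element while preserving consistency, again allowing induction. Patching the two cases together requires the Prague-strategy formalism that tracks how pointwise restrictions interact with $(2,3)$-closure, and this is the delicate technical heart I would expect to be the main obstacle. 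I do not anticipate a shorter route than the one carried out in \cite{barto_constraint_2014}: the linked case in particular seems to genuinely require the loop lemma rather than a purely combinatorial fixed-point argument.
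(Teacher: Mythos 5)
This theorem is imported into the paper as a black-box result from the literature and has no in-paper proof: the ``only if'' direction is attributed to Larose and Z\'adori \cite{larose_bounded_2007} and the ``if'' direction is the Bounded Width Theorem of Barto and Kozik \cite{barto_constraint_2014}. In the paper, it is invoked exactly once, together with Theorem~\ref{theorem:noWNU_NPc}, in the proof of Corollary~\ref{corollary:dichotomy}. So there is nothing internal to compare your sketch against; what you have written is a summary of the published proofs rather than an alternative route, and as a summary it is broadly faithful. Two cautions are worth noting. In the ``only if'' direction, a failure of $\mathrm{SD}(\wedge)$ gives a type~$1$ or type~$2$ minimal set; in the type~$2$ case one extracts an honest module, while in the type~$1$ case the relevant two-element quotient is essentially unary and pp-interprets everything, so the ``nontrivial abelian group'' encoding is available in both cases but for quite different reasons, and the argument should be phrased to cover both. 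In the ``if'' direction, the published proof is built around absorption theory and the pointing-operation characterization of $\mathrm{SD}(\wedge)$ (the paper records the latter as Theorem~\ref{theorem:every_subuniverse_has_pointing}) rather than around a direct appeal to the smooth-digraph loop lemma, though the spirit of your linked-case analysis is right and the technical difficulties you anticipate---the linked case and the stability of $(2,3)$-consistency under pointwise restriction, tracked through the Prague-strategy machinery---are the real ones.
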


The proof of the Bounded Width Theorem uncovered a new characterization of $\mathrm{SD}(\wedge)$ algebras via so-called \emph{pointing operations} as well as the concept of \emph{absorbing subuniverse}, which turned out to be quite useful even outside of the realm of congruence meet-semidistributivity (see \cite{barto_absorbing_2012,barto_maltsev_2013,barto_absorption_2017}).

\subsection{Pointing operations}
Pointing operations were first used in \cite{barto_robust_2012}. More details, as well as a proof of the characterization theorem we need, can be found in~\cite{barto_maltsev_2013}.

\begin{definition}
Let $f$ be an $n$-ary idempotent operation on a set $A$ and $X,Y$ nonempty subsets of $A$. We say that $f$ \emph{weakly points} $X$ to $Y$, if there exist $\mathbf{a^1},\dots,\mathbf{a^n}\in A^n$ such that for every $i\in[n]$ and $x\in X$ we have
$$
f(a^i_1,\dots,a^i_{i-1},x,a^i_{i+1},\dots,a^i_n)\in Y
$$
(where $x$ is in the $i$th place). We refer to $\mathbf{a^1},\dots,\mathbf{a^n}$ as \emph{witnessing tuples}.
\end{definition}
\noindent The word ``weakly'' means that we can have different witnessing tuples for different coordinates, as opposed to (strongly) pointing operations from \cite{barto_maltsev_2013}. For $f:A^k\to A$ and $g:A^n\to A$, we denote by $g\compose f$ the $kn$-ary operation on $A$ defined by
$$
(g\compose f)(x_1,\dots,x_{kn})=g(f(x_1,\dots,x_k),f(x_{k+1},\dots,x_{2k}),\dots,f(x_{(n-1)k+1},\dots,x_{nk})).
$$
We will need the following easy observation which is implicit in \cite[Proposition 2.1]{barto_maltsev_2013}.
\begin{observation} \label{observation:composing_pointing_operations}
If $f:A^k\to A$ weakly points $X$ to $Y$ and $g:A^n\to A$ weakly points $Y$ to $Z$, then $g\compose f$ weakly points $X$ to $Z$.
\end{observation}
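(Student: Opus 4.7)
The plan is to prove this directly by constructing explicit witnessing tuples for $g\compose f$ from the witnessing tuples for $f$ and $g$, using the idempotency of $f$ as the only real tool beyond index bookkeeping. Let me unpack the hypotheses: fix witnessing tuples $\mathbf{a^1},\dots,\mathbf{a^k}\in A^k$ for $f$ weakly pointing $X$ to $Y$, and $\mathbf{b^1},\dots,\mathbf{b^n}\in A^n$ for $g$ weakly pointing $Y$ to $Z$. Setting $h=g\compose f$, which is $kn$-ary, what needs to be produced is a family $\mathbf{c^1},\dots,\mathbf{c^{kn}}\in A^{kn}$ of witnesses such that, for any $\ell\in[kn]$ and any $x\in X$, the value of $h$ obtained by putting $x$ in position $\ell$ and using $\mathbf{c^\ell}$ elsewhere lies in $Z$.

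Next I would use the decomposition $\ell=(j-1)k+i$ with $j\in[n]$ and $i\in[k]$, which is exactly the encoding of coordinate $\ell$ as ``the $i$-th argument of the $j$-th internal copy of $f$'' in the definition of $g\compose f$. The witness $\mathbf{c^\ell}$ would then be constructed block by block: in the $j$-th block of $k$ consecutive coordinates, put the tuple $\mathbf{a^i}$ (so that coordinate $\ell$ of $\mathbf{c^\ell}$ is the $i$-th entry of $\mathbf{a^i}$, which will be overwritten by $x$); and in every other block $j'\ne j$, put the constant $k$-tuple $(b^j_{j'},\dots,b^j_{j'})$.

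Finally I would verify that this works. For any $x\in X$, the $j$-th inner application of $f$ becomes $f(a^i_1,\dots,a^i_{i-1},x,a^i_{i+1},\dots,a^i_k)$, which lies in $Y$ by the hypothesis on $f$; call this value $y$. For $j'\ne j$, the $j'$-th inner application is $f(b^j_{j'},\dots,b^j_{j'})=b^j_{j'}$ by idempotency of $f$. Thus $h$ evaluates to $g(b^j_1,\dots,b^j_{j-1},y,b^j_{j+1},\dots,b^j_n)\in Z$ by the hypothesis on $g$, as required.

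I do not anticipate any real obstacle; this is essentially an index-matching argument. The only subtle point is the implicit use of idempotency of $f$ to collapse each constant block to a single element of the chosen witnessing tuple $\mathbf{b^j}$, which is legitimate because weak pointing is defined only for idempotent operations. The construction also makes transparent why the notion has to be \emph{weak} (allowing distinct witnessing tuples for distinct coordinates): the block layout of $\mathbf{c^\ell}$ genuinely depends on which $(i,j)$ the coordinate $\ell$ unpacks to.
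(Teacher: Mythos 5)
Your construction is correct and is essentially identical to the paper's: you decompose $\ell=(j-1)k+i$, place $\mathbf{a^i}$ in the $j$-th block and constant copies of the entries of $\mathbf{b^j}$ in the remaining blocks, then use idempotency of $f$ to collapse the constant blocks — exactly the tuples the paper calls $\mathbf{c^{j,i}}$ (with the two superscript roles swapped relative to your indices). No gaps; the only difference is naming conventions for the two indices.
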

\begin{proof}
Let the witnessing tuples for $f$ weakly pointing $X$ to $Y$ and $g$ weakly pointing $Y$ to $Z$ be $\mathbf{a^1},\dots,\mathbf{a^k}$ and $\mathbf{b^1},\dots,\mathbf{b^n}$, respectively. For $i\in[n]$ and $j\in[k]$ define $\mathbf{c^{i,j}}\in A^{nk}$ to be the following tuple:
\begin{align*}
\mathbf{c^{i,j}}=(& b^i_1,b^i_1,\dots,b^i_1,b^i_2,b^i_2,\dots,b^i_2,\dots,b^i_{i-1},b^i_{i-1},\dots,b^i_{i-1},\\
& a^j_1,a^j_2,\dots,a^j_k,b^i_{i+1},b^i_{i+1},\dots,b^i_{i+1},\dots,b^i_n,b^i_n,\dots,b^i_n),
\end{align*}
where $b^i_l$ appears $k$-times for every $l\in [n]\setminus\{i\}$. It is straightforward to verify (using idempotency of $f$) that $g\compose f$ weakly points $X$ to $Z$ with witnessing tuples $\mathbf{c^{1,1}},\mathbf{c^{1,2}},\dots,\mathbf{c^{1,k}},\mathbf{c^{2,1}},\dots,\mathbf{c^{n,k}}$.
\end{proof}

Of particular interest are term operations weakly pointing the whole algebra (or a subuniverse) to a singleton, due to the following characterization of congruence meet-semidistributivity.
\begin{definition}
Let $\mathbf A$ be a finite idempotent algebra. We say that $\mathbf A$ \emph{has a weakly pointing operation}, if there exists $\tau\in\Clo\mathbf A$ and $a\in A$ such that $\tau$ weakly points $A$ to $\{a\}$.
\end{definition}
\begin{theorem}[{\cite[Theorem 1.3]{barto_maltsev_2013}}] \label{theorem:every_subuniverse_has_pointing}
A finite idempotent algebra $\mathbf A$ is $\mathrm{SD}(\wedge)$, if and only if every nonempty subuniverse $B\leq\mathbf A$ has a weakly pointing operation.
\end{theorem}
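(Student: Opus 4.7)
The plan is to apply Theorem \ref{theorem:every_subuniverse_has_pointing}: assuming $\alg\mathbb H$ is Taylor, it suffices to show that every nonempty subuniverse $B\leq\alg\mathbb H$ admits a weakly pointing operation. From the Taylor hypothesis and Theorem \ref{theorem:WNU=Taylor_manyWNU=SDmeet} we obtain a WNU polymorphism $\omega$ of $\mathbb H$; every term operation used below will be built from $\omega$ and projections.

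Because $\mathbb H$ is balanced with level function $\lvl$, every polymorphism preserves levels. Hence each level set $H_i$ ($0\leq i\leq h$) is a subuniverse of $\alg\mathbb H$, and any $B\leq\alg\mathbb H$ decomposes as the disjoint union $\bigsqcup_i (B\cap H_i)$. The extremal levels $H_0$ and $H_h$ correspond to the vertices of the height-$1$ tree $\mathbb T$ from which $\mathbb H$ is built (black and white squares), while the intermediate levels consist of internal vertices of the minimal paths $\mathbb P_{(a,b)}$.

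\textbf{Step 1 (reduce to the extremes).} The first goal is to exhibit a term $\tau_1\in\Clo(\alg\mathbb H)$ weakly pointing an arbitrary $B$ to the subuniverse $B\cap(H_0\cup H_h)$. The key tool is Lemma \ref{lemma:minpaths}: applied to the finite family of minimal paths appearing in $\mathbb H$, it produces a single minimal path $\mathbb Q$ of height $h$ with onto homomorphisms $\mathbb Q\to\mathbb P_{(a,b)}$ for every edge of $\mathbb T$. Iterating $\omega$ along $\mathbb Q$'s structure and choosing witnessing tuples from $H_0\cup H_h$ allows interior vertices of any subuniverse to be pushed toward the endpoints, producing the desired pointing.

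\textbf{Step 2 (extremes to a singleton).} Once a subuniverse $B'\subseteq H_0\cup H_h$ is reached, it encodes a sub-bipartite structure of $\mathbb T$, restricted by which endpoint pairs are joined by minimal paths in $\mathbb H$. On this structure $\omega$ restricts to a WNU. The plan is to proceed by induction on $|B'|$: using $\omega$ together with the fact that $\mathbb T$ has height~$1$ (so there are no oriented cycles obstructing a ``shrinking'' argument), find a proper subuniverse $B''\subsetneq B'$ and a term weakly pointing $B'$ to $B''$. Iterating produces a weakly pointing operation onto a singleton. Composing with Step 1 via Observation \ref{observation:composing_pointing_operations} yields a term weakly pointing $B$ to a singleton of $\mathbb H$. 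By Theorem \ref{theorem:every_subuniverse_has_pointing} this gives $\mathrm{SD}(\wedge)$.

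\textbf{Main obstacle.} The bulk of the work lies in Step~1, making precise how $\omega$ and the minimal-path structure together push interior vertices to the extremes. Unlike smooth digraphs, special trees possess ``dead ends'' at the top and bottom, so one must carefully leverage absorption theory (as in \cite{barto_absorbing_2012,barto_maltsev_2013}) to identify which subsets of a minimal path absorb which others under $\omega$, and then lift this pointwise information along the onto homomorphisms $\mathbb Q\to\mathbb P_{(a,b)}$ supplied by Lemma \ref{lemma:minpaths}. The inductive shrinking in Step~2 is the other delicate point: one must verify that the Taylor assumption on $\alg\mathbb H$ (rather than on the smaller algebra on $H_0\cup H_h$) suffices to rule out the obstructions to $\mathrm{SD}(\wedge)$ at each stage, ultimately because the height-$1$ tree structure of $\mathbb T$ admits no nontrivial cyclic congruence obstruction.
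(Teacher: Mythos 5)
The statement you were asked to prove is Theorem \ref{theorem:every_subuniverse_has_pointing}, which is Barto and Kozik's characterization of $\mathrm{SD}(\wedge)$ via weakly pointing operations. This theorem is imported from \cite{barto_maltsev_2013} and is not proved in this paper at all; it is one of the algebraic tools the author quotes. Your write-up does not attempt to prove it either: you open by \emph{applying} Theorem \ref{theorem:every_subuniverse_has_pointing}, using it as a black box, and go on to sketch an argument for a different statement, namely the main Theorem \ref{theorem:main} (``if $\alg\mathbb H$ is Taylor then it is $\mathrm{SD}(\wedge)$''). So, as a response to the stated task, this is not a proof of the target theorem and cannot be compared with the paper's (nonexistent) proof of it.

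Even setting that aside and reading your text as a plan for Theorem \ref{theorem:main}, there is a concrete gap in Step 1. You propose to weakly point an arbitrary nonempty $B\leq\alg\mathbb H$ to $B\cap(H_0\cup H_h)$. But $\alg\mathbb H$ is idempotent, so every singleton $\{v\}$ with $v$ an interior vertex of some minimal path is a subuniverse, and for such $B$ the intersection $B\cap(H_0\cup H_h)$ is empty; there is nothing to point to. This is precisely why the paper does not try to establish weakly pointing operations for arbitrary subuniverses of $\alg\mathbb H$. Instead, it proves $\mathrm{SD}(\wedge)$ only for the top and bottom subuniverses $A$ and $B$ (and hence for $A\times B$), obtains WNU term operations there of all large arities via Theorem \ref{theorem:WNU=Taylor_manyWNU=SDmeet}, and then uses Lemma \ref{lemma:extending_WNUs} to \emph{extend} each such operation to a WNU polymorphism on all of $H$ --- a by-hand surgery on the interior that sidesteps the issue you run into. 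The ``shrinking'' inductive argument you sketch in Step 2 also appears in the paper, but in a much more specific form (Lemmas \ref{lemma:WNUabsorbing_E2o}--\ref{lemma:every_AF_of_A_has_pointing}), crucially relying on the singleton absorbing subuniverse $\{o\}$ and the tree order $\preceq$ it induces; nothing in your sketch identifies or uses such an element.
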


\begin{remark*}
Using this characterization it is easy to prove that given a finite idempotent algebra $\mathbf A$, the class of all $\mathrm{SD}(\wedge)$ members of the pseudovariety generated by $\mathbf A$ (that is, quotients of subuniverses of finite powers of $\mathbf A$) is closed under taking products, subalgebras, and quotients. In particular, we will need the following fact.

\end{remark*}

\begin{lemma}[{\cite[Proposition 2.1(7)]{barto_maltsev_2013}}] \label{lemma:SDmeet_closed_under_products}
Let $\mathbf A$ be a finite idempotent algebra and $B,C$ its nonempty subuniverses. If $B$ and $C$ are $\mathrm{SD}(\wedge)$, then $B\times C$ (considered as a subuniverse of $\mathbf A^2$) is $\mathrm{SD}(\wedge)$ as well.
\end{lemma}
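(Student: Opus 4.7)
The plan is to invoke Theorem \ref{theorem:every_subuniverse_has_pointing}: it suffices to show that every nonempty subuniverse $D\leq B\times C$ of $\mathbf A^2$ admits a weakly pointing operation in $\Clo(\mathbf A)$. I will build such an operation by composing (in the sense of Observation \ref{observation:composing_pointing_operations}) two operations inherited from the hypotheses on $B$ and $C$: one that collapses the first coordinate to a fixed element $b_0$, and a second that, restricted to what remains, collapses the second coordinate to some $c_0$.

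First I set $B'=\pi_1(D)\leq B$ and $C'=\pi_2(D)\leq C$, both nonempty. Since $B$ is $\mathrm{SD}(\wedge)$, Theorem \ref{theorem:every_subuniverse_has_pointing} applied to $B'$ supplies an operation $\sigma\in\Clo(\mathbf A)$ of some arity $k$, an element $b_0\in B'$, and witnessing tuples $\mathbf u^1,\dots,\mathbf u^k\in (B')^k$ such that $\sigma$ weakly points $B'$ to $\{b_0\}$. For each entry $u^i_j$ I pick some $v^i_j$ with $(u^i_j,v^i_j)\in D$ and form the lifted tuples $\tilde{\mathbf u}^i=((u^i_1,v^i_1),\dots,(u^i_k,v^i_k))\in D^k$. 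Applying $\sigma$ componentwise on $\mathbf A^2$ with these witnessing tuples, I get for any $(x,y)\in D$ an output of the form $(b_0,z)$; since all inputs lie in the subuniverse $D$, the output pair lies in $D$ too. Thus $\sigma$ weakly points $D$ to $D_1:=D\cap(\{b_0\}\times C)$, which is a nonempty subuniverse of $\mathbf A^2$.

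Next, since $C$ is $\mathrm{SD}(\wedge)$ and $\pi_2(D_1)\leq C$ is nonempty, Theorem \ref{theorem:every_subuniverse_has_pointing} yields an operation $\tau\in\Clo(\mathbf A)$ of some arity $m$, a point $c_0\in\pi_2(D_1)$, and witnessing tuples $\mathbf w^1,\dots,\mathbf w^m\in(\pi_2(D_1))^m$ such that $\tau$ weakly points $\pi_2(D_1)$ to $\{c_0\}$. By definition of $D_1$, each $(b_0,w^i_j)$ lies in $D_1$, so the lifted tuples $\tilde{\mathbf w}^i=((b_0,w^i_1),\dots,(b_0,w^i_m))\in D_1^m$ witness that $\tau$ weakly points $D_1$ to $\{(b_0,c_0)\}$: the first coordinate of every output equals $b_0$ by idempotency of $\tau$, and the second equals $c_0$ by the choice of $\tau$.

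Finally, Observation \ref{observation:composing_pointing_operations} gives that $\tau\compose\sigma$ weakly points $D$ to $\{(b_0,c_0)\}$, so $D$ has a weakly pointing operation; as $D\leq B\times C$ was arbitrary, Theorem \ref{theorem:every_subuniverse_has_pointing} then yields that $B\times C$ is $\mathrm{SD}(\wedge)$. The only slightly delicate step is the lifting of witnessing tuples: after replacing single elements by pairs, the tuples must still lie in the relevant subuniverse, which is exactly what the definitions of $B'$, $D_1$, and $\pi_2(D_1)$ guarantee. I expect no deeper obstacle.
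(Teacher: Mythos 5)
Your proof is correct. The paper does not supply an argument for this lemma---it cites it from an external reference---but the remark immediately preceding it says exactly this should be done: derive closure under products from the pointing-operation characterization (Theorem \ref{theorem:every_subuniverse_has_pointing}). Your proposal carries that out cleanly: projecting $D$ to get $B'$, lifting witnessing tuples through $D$ to weakly point $D$ into $D_1=D\cap(\{b_0\}\times C)$, then projecting $D_1$ to the second coordinate and lifting again (using idempotency to pin the first coordinate at $b_0$), and finally composing via Observation \ref{observation:composing_pointing_operations}. All the lifts land in the claimed subuniverses for the reasons you give, so there is no gap.
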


\subsection{Absorbing subuniverses}
We briefly introduce basic notions and facts from the theory of absorption of Barto and Kozik. For more details see \cite{barto_absorption_2017,barto_absorbing_2012,barto_maltsev_2013}.

\begin{definition}
Let $\mathbf A$ be an algebra and $B\leq\mathbf A$ a nonempty subuniverse. We say that $B$ is an \emph{absorbing subuniverse} of $\mathbf A$, and write $B\abs\mathbf A$, if there exists an idempotent $\tau\in\Clo\mathbf A$ such that
\begin{gather*}
\tau(A,B,B,\dots,B,B)\subseteq B,\\
\tau(B,A,B,\dots,B,B)\subseteq B,\\
\vdots\\
\tau(B,B,B,\dots,B,A)\subseteq B.
\end{gather*}
We call $\tau$ an \emph{absorbing operation} and say that $B$ absorbs $\mathbf A$ \emph{via} $\tau$.
\end{definition}
\noindent Note that $B$ absorbs $\mathbf A$ \emph{via} $\tau$ (say $n$-ary), if and only if $\tau$ weakly points $A$ to $B$ and any tuples $\mathbf{b^1},\dots,\mathbf{b^n}\in B^n$ can serve as witnessing tuples for that. Hence absorption is somewhat stronger than pointing operations.

In applications of absorption theory, an important role is played by algebras with no proper absorbing subuniverses, the \emph{absorption-free} algebras.

\begin{definition}
An algebra $\mathbf A$ is \emph{absorption-free}, if $|A|>1$, and $B\abs\mathbf A$ implies that $B=A$.
\end{definition}

The following corollary, which is an easy consequence of Theorem \ref{theorem:every_subuniverse_has_pointing}, will be applied several times in our proof.

\begin{corollary}[{see \cite[Corollary 2.14]{barto_maltsev_2013}}] \label{corollary:every_AF_has_pointing}
A finite idempotent algebra $\mathbf A$ is $\mathrm{SD}(\wedge)$, if and only if every absorption-free subuniverse $B\leq\mathbf A$ has a weakly pointing operation.
\end{corollary}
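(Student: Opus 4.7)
The plan is to derive this corollary directly from Theorem \ref{theorem:every_subuniverse_has_pointing} together with Observation \ref{observation:composing_pointing_operations}. One direction is immediate: if $\mathbf A$ is $\mathrm{SD}(\wedge)$, then by Theorem \ref{theorem:every_subuniverse_has_pointing} every nonempty subuniverse of $\mathbf A$ has a weakly pointing operation, so in particular every absorption-free one does.

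For the converse, assume that every absorption-free subuniverse of $\mathbf A$ has a weakly pointing operation. By Theorem \ref{theorem:every_subuniverse_has_pointing} it suffices to show that every nonempty $B\leq\mathbf A$ has a weakly pointing operation, and I would prove this by induction on $|B|$. The case $|B|=1$ is trivial. If $B$ is absorption-free, the hypothesis applies to $B$ directly. Otherwise, pick a proper absorbing subuniverse $\emptyset\neq C\abs B$ with $|C|<|B|$ and combine the inductive hypothesis on $C$ with the absorption $C\abs B$.

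Concretely, let $\tau\in\Clo(B)$ be an absorbing operation witnessing $C\abs B$ (of arity $k$, say). Then $\tau$ weakly points $B$ to $C$: for each coordinate $i$, any tuple in $C^k$ serves as a witnessing tuple, since fixing the remaining $k-1$ coordinates in $C$ and letting the $i$-th coordinate range over $B$ lands in $C$ by the definition of absorption. By the inductive hypothesis, $C$ has a weakly pointing operation $\sigma\in\Clo(C)$, pointing $C$ to some singleton $\{c\}$. Because term operations of $C$ are precisely restrictions of term operations of $\mathbf A$, we may regard $\sigma$ as (the restriction to $B$ of) a term operation of $B$; its witnessing tuples, originally lying in $C^n\subseteq B^n$, still work since all inputs on which we evaluate remain in $C$. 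Observation \ref{observation:composing_pointing_operations} then yields that $\sigma\compose\tau\in\Clo(B)$ weakly points $B$ to $\{c\}$, completing the induction.

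The only delicate point is the clone-theoretic bookkeeping in the inductive step, namely identifying a term operation of $C$ with the restriction of a term operation of $B$ and verifying that the witnessing tuples remain valid after this identification; both are routine once one recalls how operations of a subuniverse arise. I do not anticipate a genuine obstacle: absorption supplies the first stage of a pointing operation that pushes everything into a strictly smaller subuniverse, and induction plus the composition law for weakly pointing operations handle the second stage.
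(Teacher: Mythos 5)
Your proof is correct and is precisely the ``easy consequence'' alluded to in the paper, which does not give a proof but cites Theorem~\ref{theorem:every_subuniverse_has_pointing} and attributes the corollary to Barto--Kozik. The forward direction is immediate; for the converse, the induction on $|B|$---dropping from $B$ to a proper absorbing subuniverse $C$ whenever $B$ is not absorption-free (and $|B|>1$), observing that an absorbing operation weakly points $B$ to $C$, and composing via Observation~\ref{observation:composing_pointing_operations}---is the natural argument, and your handling of the clone-theoretic bookkeeping (viewing term operations of $C$ as restrictions of term operations of $B$ and checking the witnessing tuples stay valid) is sound.
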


We will use without further notice the following easy facts about absorption:
\begin{lemma}[{\cite[Proposition 2.4]{barto_absorbing_2012}}] Let $\mathbf A$ be a finite idempotent algebra.
\begin{itemize}
\item If $B\abs\mathbf A$ and $C\abs B$, then $C\abs\mathbf A$.
\item If $B\abs\mathbf A$ (via $\tau$) and $C\leq A$ and $B\cap C\neq\emptyset$, then $B\cap C\abs C$ (via $\tau |_C$).
\end{itemize}
\end{lemma}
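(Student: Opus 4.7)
For the first statement (transitivity of absorption), the natural approach is to compose the two absorbing witnesses. Suppose $B$ absorbs $\mathbf{A}$ via an $n$-ary $\sigma$ and $C$ absorbs $B$ via an $m$-ary $\tau$; since $B\leq\mathbf{A}$, we may lift $\tau$ to an element of $\Clo(\mathbf{A})$ whose restriction to $B$ is the given absorbing operation, so both $\sigma,\tau\in\Clo(\mathbf{A})$ and are idempotent (as $\mathbf A$ is). I claim that the composite $\tau\compose\sigma$ is an $mn$-ary idempotent witness for $C\abs\mathbf{A}$. To check this, fix a position and consider an input whose entry at that position lies in $A$ and whose remaining $mn-1$ entries lie in $C$, grouped into $m$ blocks of size $n$ feeding the $m$ inner copies of $\sigma$. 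In every block disjoint from the distinguished position, all $n$ entries lie in $C\subseteq B$; since $C$ is a subuniverse, the block's output lies in $C$. In the block that contains the distinguished position, $n-1$ entries lie in $C\subseteq B$ and one lies in $A$, so the absorbing property of $\sigma$ places the block's output in $B$. Feeding these $m$ intermediate values to $\tau$, we see that at most one lies in $B\setminus C$ while the others lie in $C$, and the absorbing property of $\tau$ then forces the final output into $C$.

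For the second statement, since $C\leq\mathbf{A}$ the subuniverse $C$ is closed under $\tau$, so $\tau|_{C}$ is a term operation of the subalgebra $C$ and remains idempotent. Given a tuple with all entries in $B\cap C$ except one entry in $C$, the value $\tau(\cdot)$ lies in $C$ by closure and lies in $B$ by the absorbing property of $\tau$ on $\mathbf{A}$; hence it lies in $B\cap C$, as required. Nonemptiness of $B\cap C$ is given by hypothesis.

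Neither part is deep. The only mild subtlety is in the first part: one must note that blocks containing no ``rogue'' entry produce elements of $C$ (exploiting that $C$ is a subuniverse of $\mathbf{A}$), not merely of $B$; otherwise the outer $\tau$ would only be guaranteed an input from $B^m$, which is insufficient to force the final value into $C$. This is precisely why the plain composition works and no further tricks seem necessary.
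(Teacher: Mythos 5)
The paper states this lemma as a known fact, citing \cite[Proposition 2.4]{barto_absorbing_2012}, and supplies no proof of its own, so there is no in-paper argument to compare against. Your proof is correct and is the standard one. For transitivity you correctly identify the composite $\tau\compose\sigma$ (in the paper's notation) as the witness: blocks away from the rogue coordinate stay inside $C$ because $C\leq\mathbf A$, the block containing the rogue coordinate lands in $B$ by the absorption of $B$ in $\mathbf A$, and the outer application of $\tau$ then invokes the absorption of $C$ in $B$. You also rightly flag that $\tau$, given a priori only as a term operation of the subalgebra on $B$, must be realized as the restriction of a term operation of $\mathbf A$ before $\tau\compose\sigma$ can be formed inside $\Clo(\mathbf A)$; this is exactly the lifting step that makes the composite legitimate. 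The second bullet is immediate from closure of $C$ under $\tau$ together with $\tau(B\cap C,\dots,B\cap C,C,B\cap C,\dots,B\cap C)\subseteq B$ for each position of the $C$-entry, as you say. No gaps.
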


\section{The proof} \label{section:proof}

Let us start by introducing notation used throughout the proof. First, we fix the underlying tree structure. Let $\mathbb T=(T;E)$ be an oriented tree of height 1, with $T=A\mathbin{\dot{\cup}}B$ and $E\subseteq A\times B$. Now let $\mathbb H$ be a $\mathbb T$-special tree of height $h$ such that $\alg\mathbb H$ is Taylor. (Recall that $A$ and $B$ are the bottom and top levels of $\mathbb H$, respectively.) Our aim is to prove that $\alg\mathbb H$ is $\mathrm{SD}(\wedge)$. Below we present a high level overview of the proof.

\subsubsection*{Structure of the proof}

We divide the proof into several steps organized into subsections.
\begin{enumerate}
  \item[\emph{\ref{subsection:reduction_bottom_top}}]\emph{Reduction to the bottom and top levels.}

  We show that $A$ and $B$ are subuniverses (Lemma \ref{lemma:ABE_subuniverses}) and that it is enough to prove that these two subuniverses are  $\mathrm{SD}(\wedge)$. We use the characterization from Theorem \ref{theorem:WNU=Taylor_manyWNU=SDmeet}. The key part is Lemma \ref{lemma:extending_WNUs}: Any idempotent polymorphism of $\mathbb H$ which satisfies the WNU identities on $A$ and $B$ can be modified to obtain a WNU operation on all of $H$.

  In the rest of the proof we only use the characterization from Corollary \ref{corollary:every_AF_has_pointing}. We need to prove that every absorption-free subuniverse of $A$ or $B$ has a weakly pointing operation.

  \item[\ref{subsection:singleton_absorbing}] \emph{Singleton absorbing subuniverse.}

  In this step we prove that $A$ or $B$ must have a singleton absorbing subuniverse $\{o\}$ (Lemmata \ref{lemma:WNUabsorbing_E2o} and \ref{lemma:singleton_absorbing}). This implies that there is a lot of ``absorption'' in $A$ and $B$, and absorption-free subuniverses are ``rare''. In particular, in Corollary \ref{corollary:AF_has_single_distance} we show that in any absorption-free subuniverse, all elements must have the same distance from $o$ (measured in the underlying tree structure $\mathbb T$).

  \item[\ref{subsection:Eneighbourhoods}] \emph{$E$-neighbourhoods of singletons are $\mathrm{SD}(\wedge)$.}

  In the next step we show that if an absorption-free subuniverse $C$ lies in the neighbourhood (in the underlying tree structure) of some vertex $b$ from $A\cup B$, then it has a weakly pointing operation. First we show that ${b}$ ``absorbs'' elements which are ``farther from $\{o\}$ than $C$'' via a certain binary operation (Lemma \ref{lemma:star_absorbs}). This allows for an intricate construction to obtain a weakly pointing operation for $C$ (Lemmata \ref{lemma:binary_polymorphisms_on_C} and \ref{lemma:C_has_pointing}).

  \item[\ref{subsection:all_AF}] \emph{All absorption-free subuniverses are $\mathrm{SD}(\wedge)$.}

  Finally, we show that every absorption-free subuniverse $C$ of $A$ or $B$ has a weakly pointing operation (Lemma \ref{lemma:every_AF_of_A_has_pointing}). This step is relatively easy using the previous step and induction on the distance of $C$ from the singleton absorbing subuniverse $\{o\}$.

\end{enumerate}

In most of the proof we only reason about compatibility of various operations with the underlying tree structure. The only places where we need to talk about concrete edges of $\mathbb H$ is in Lemma \ref{lemma:extending_WNUs} and Lemma \ref{lemma:binary_polymorphisms_on_C}. Recall that we use the notation ``$(a,b)\in E$'' for edges in the underlying tree structure and ``$u\myrightarrow v$ in $\mathbb H$'' for edges in the special tree $\mathbb H$.

\subsection{Reduction to the bottom and top levels}\label{subsection:reduction_bottom_top}

Our first step is to show that we can focus only on the top and bottom levels of $\mathbb H$, i.e., the sets (indeed, subuniverses) $A$ and $B$. This is the property that justifies the definition of special trees; the key ingredient is the fact about minimal paths stated in Lemma \ref{lemma:minpaths}. The reduction was already described in \cite[Lemma 4.4]{barto_csp_2013} (for special polyads). Below we present a somewhat simpler argument.

We start by showing that $A$, $B$ and $E$ (that is, the bottom and top levels of $\mathbb H$, and the edge relation of the underlying tree structure $\mathbb T$) are preserved by idempotent polymorphisms of $\mathbb H$.

\begin{lemma} \label{lemma:ABE_subuniverses}
Both $A$ and $B$ are subuniverses of $\alg\mathbb H$. Moreover, $E$ (which is a subset of $A\times B$) is a subuniverse of $(\alg\mathbb H)^2$.
\end{lemma}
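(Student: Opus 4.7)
The central observation is that $A$ and $B$ coincide with the level-$0$ and level-$h$ vertices of $\mathbb{H}$, while every intermediate level is occupied exclusively by interior (non-endpoint) vertices of the minimal paths glued to form $\mathbb{H}$. Thus preserving $A$, $B$ and $E$ reduces to preserving the extreme levels. The main tool will be Lemma \ref{lemma:minpaths}: given several minimal paths of the same height, it produces a common minimal path $\mathbb{Q}$ mapping homomorphically onto each of them, which lets us convert a tuple in $\mathbb{H}^n$ into a single ``walk'' over which we can apply any polymorphism.

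\textbf{Subuniverse claims.} Fix an $n$-ary idempotent polymorphism $f$ and vertices $a_1,\dots,a_n \in A$. For each $i$ pick a minimal path $\mathbb{P}_i$ in $\mathbb{H}$ whose initial vertex is $a_i$ (such a path exists because every vertex of $A$ has out-degree at least one in $\mathbb{T}$). Apply Lemma \ref{lemma:minpaths} to obtain a minimal path $\mathbb{Q}$ of height $h$ with homomorphisms $\pi_i:\mathbb{Q}\to\mathbb{P}_i$. Level-preservation between balanced digraphs of matching heights forces $\pi_i(q_0)=a_i$ for the initial vertex $q_0$ of $\mathbb{Q}$. Now define $\varphi:\mathbb{Q}\to\mathbb{H}$ by $\varphi(q)=f(\pi_1(q),\dots,\pi_n(q))$; this is a homomorphism, and following any walk in $\mathbb{Q}$ shows that $\lvl(\varphi(q))=\ell+\lvl(q)$ for some fixed shift $\ell$. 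Since $\mathbb{H}$ admits only levels in $[0,h]$ and $\mathbb{Q}$ already spans $[0,h]$, we must have $\ell=0$. In particular $\varphi(q_0)=f(a_1,\dots,a_n)$ lies at level $0$, hence in $A$. The argument for $B$ is symmetric, using terminal vertices of minimal paths ending at each $b_i$.

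\textbf{The edge claim.} For pairs $(a_i,b_i)\in E$, take $\mathbb{P}_i$ to be the specific minimal path $\mathbb{P}_{(a_i,b_i)}$ used in the construction of $\mathbb{H}$, so that $\pi_i$ sends $q_0$ to $a_i$ and $q_h$ to $b_i$. The same diagonal homomorphism $\varphi$ then satisfies $\varphi(q_0)=f(a_1,\dots,a_n)$ and $\varphi(q_h)=f(b_1,\dots,b_n)$, while every intermediate vertex of $\varphi(\mathbb{Q})$ has level strictly between $0$ and $h$ and therefore avoids $A\cup B$. Because the minimal paths of $\mathbb{H}$ are pairwise disjoint off $T$ and share no edges through interior vertices, the connected image $\varphi(\mathbb{Q})$ is confined to a single minimal path of $\mathbb{H}$, whose initial and terminal vertices must be $f(\mathbf{a})$ and $f(\mathbf{b})$ respectively. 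This exhibits $(f(\mathbf{a}),f(\mathbf{b}))$ as an edge of $\mathbb{T}$, so $E$ is closed under coordinatewise application of $f$.

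\textbf{Main obstacle.} The crux is the level-shift computation: pinning down $\ell=0$ uses crucially that $\mathbb{Q}$ and $\mathbb{H}$ have exactly the same height, which is why applying Lemma \ref{lemma:minpaths} to paths of height precisely $h$ is essential. Once the shift is controlled, the remaining work—showing that an image walk confined to intermediate levels is trapped inside one minimal path of $\mathbb{H}$—is routine structural bookkeeping about how the minimal paths glue together along $A\cup B$.
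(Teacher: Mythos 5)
Your proof is correct and rests on the same two pillars as the paper's: Lemma~\ref{lemma:minpaths} to fabricate a common minimal path $\mathbb Q$, and the level-shift computation $\ell=0$ (forced because $\mathbb Q$ and $\mathbb H$ share height $h$) to confine images of polymorphisms to the extreme levels and to a single glued path. The packaging differs: the paper picks \emph{one} $\mathbb Q$ mapping onto every $\mathbb P_e$, exhibits the primitive positive formula $E=\{(\varphi(u),\varphi(v))\mid\varphi:\mathbb Q\to\mathbb H\}$, and then invokes the Galois correspondence of Lemma~\ref{lemma:pp-definable=subuniverse}, from which $A=(\exists y)\,E(x,y)$ and $B=(\exists y)\,E(y,x)$ fall out immediately. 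You instead build a per-tuple $\mathbb Q$ and push it forward through the diagonal map $\varphi(q)=f(\pi_1(q),\dots,\pi_n(q))$ to verify closure under each polymorphism $f$ directly, proving $A$ and $B$ first and $E$ afterward. The pp-definability route is a bit shorter because the ``image of $\mathbb Q$ is trapped in one minimal path'' bookkeeping is done once, inside the description of the set $E$, rather than for each polymorphism; your version makes the same computation explicit and is a valid, if slightly more laborious, unpacking of it.
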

\begin{proof}
By Lemma \ref{lemma:pp-definable=subuniverse}, it is enough to show that $A$, $B$ and $E$ are primitive positive definable from $\mathbb H$ with constants (although, in fact, we will not need the constants). Let $\mathbb Q$ be a minimal oriented path of height $h$ which maps homomorphically onto $\mathbb P_e$ for all $e\in E$, provided by Lemma \ref{lemma:minpaths}. Let us denote by $u$ and $v$ the initial and terminal vertex of $\mathbb Q$, respectively. The binary relation $E$ is equal to the set
$$
\{(\varphi(u),\varphi(v))\mid\varphi:\mathbb Q\to\mathbb H\text{ is a homomorphism}\},
$$
which can be expressed by a primitive positive formula. Consequently, $A(x)=(\exists y)((x,y)\in E)$ and $B(x)=(\exists y)((y,x)\in E)$ provides us with primitive positive definitions of $A$ and $B$, respectively.
\end{proof}

It is useful to observe that an $n$-ary polymorphism can be defined on different connected components of $\mathbb H^n$ independently; to verify that it preserves the edges one has to be concerned with inputs from one component at a time only. Among the connected components of $\mathbb H^n$, the most important one is the component containing the diagonal, as we show in the next lemma.

 For $n>0$ we denote by $\Delta_n$ the connected component of the digraph $\mathbb H^n$ containing the diagonal (i.e., the set $\{(v,\dots,v):v\in H\}$).

\begin{lemma}
For any $n>0$, $(A^n\cup B^n)\subseteq\Delta_n$.
\end{lemma}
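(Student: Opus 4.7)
The plan is to show that every tuple in $A^n$ is connected in $\mathbb{H}^n$ to some constant tuple, which by definition lies in $\Delta_n$; the case of $B^n$ is symmetric. The key device for synchronizing the $n$ coordinates is the uniform minimal path supplied by Lemma \ref{lemma:minpaths}: I would first apply it to the finite family $\{\mathbb{P}_e : e\in E\}$ to obtain a single minimal path $\mathbb{Q}$ of height $h$, with initial vertex $u$ and terminal vertex $v$, that admits an onto homomorphism to every $\mathbb{P}_e$. Because homomorphisms of balanced digraphs preserve levels and $u,v$ are the only vertices of $\mathbb{Q}$ at levels $0$ and $h$, any such homomorphism must send $u$ to the initial vertex of $\mathbb{P}_e$ and $v$ to its terminal vertex.

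The central step is a \emph{swap lemma}: if $\mathbf{a}=(a_1,\dots,a_n)$ and $\mathbf{a}'=(a_1',\dots,a_n')$ lie in $A^n$ and for every $i$ there is some $b_i\in B$ with both $a_i\dashrightarrow b_i$ and $a_i'\dashrightarrow b_i$, then $\mathbf{a}$ and $\mathbf{a}'$ lie in the same component of $\mathbb{H}^n$. To see this, choose homomorphisms $\varphi_i\colon\mathbb{Q}\to\mathbb{P}_{(a_i,b_i)}$ and $\psi_i\colon\mathbb{Q}\to\mathbb{P}_{(a_i',b_i)}$ from Lemma \ref{lemma:minpaths}; the coordinatewise products $(\varphi_1,\dots,\varphi_n)$ and $(\psi_1,\dots,\psi_n)$ are homomorphisms $\mathbb{Q}\to\mathbb{H}^n$ that connect $\mathbf{a}$ and $\mathbf{a}'$, respectively, to the common tuple $(b_1,\dots,b_n)\in B^n$. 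Hence $\mathbf{a}$ and $\mathbf{a}'$ share a component of $\mathbb{H}^n$.

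To finish, I would introduce the relation $\sim$ on $A$ defined by $a\sim a'$ iff $a$ and $a'$ share a $B$-neighbor in $\mathbb{T}$. Since every vertex of the connected height-$1$ tree $\mathbb{T}$ has at least one $B$-neighbor, $\sim$ is reflexive and symmetric, and since any oriented path in $\mathbb{T}$ zigzags between $A$ and $B$, its transitive closure is all of $A\times A$. Fix any $a^*\in A$: for each $i$ pick a $\sim$-walk from $a_i$ to $a^*$, and use reflexivity to pad the shorter walks so that all $n$ walks have a common length $k$. Iterating the swap step $k$ times yields a walk in $\mathbb{H}^n$ from $\mathbf{a}$ to the constant tuple $(a^*,\dots,a^*)\in\Delta_n$, which gives $\mathbf{a}\in\Delta_n$; the $B^n$ case is obtained by the same argument with the roles of $A$ and $B$ exchanged.

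The only real (and mild) obstacle is that walks in $\mathbb{H}^n$ require the $n$ coordinates to advance along a common sequence of forward/backward steps. This is precisely what the universal path $\mathbb{Q}$ from Lemma \ref{lemma:minpaths} provides, so using the same $\mathbb{Q}$ in every coordinate resolves the synchronization issue; the remaining work is routine combinatorics of height-$1$ trees.
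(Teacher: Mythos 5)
Your proof is correct and follows essentially the same route as the paper's: both rely on Lemma \ref{lemma:minpaths} to synchronize coordinates and lift a step in $\mathbb{T}^n$ to an oriented path in $\mathbb{H}^n$, and then reduce to connectivity within $A^n\cup B^n$. The only difference is cosmetic: the paper asserts that $A^n\cup B^n$ is connected in $\mathbb{T}^n$ as "easily seen" and lifts single $\mathbb{T}^n$-edges, whereas you spell that connectivity out via the relation $\sim$ and package two edges into each "swap" -- a welcome elaboration, but not a different argument.
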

\begin{proof}
It is easily seen that the set $(A^n\cup B^n)$ is connected in the digraph $\mathbb T^n$. Let $(\mathbf a,\mathbf b)$ be an edge in $\mathbb T^n$ (i.e., $(a_i,b_i)\in E$ for $i\in[n]$). Let $\mathbb Q$ be a minimal oriented path of height $h$ which maps homomorphically onto all the paths $\{\mathbb P_{(a_i,b_i)}\mid i\in[n]\}$, whose existence is provided by Lemma \ref{lemma:minpaths}. For every $i\in[n]$
let $\varphi_i:\mathbb Q\to\mathbb P_{(a_i,b_i)}$ be a homomorphism. Then the mapping $\Phi:\mathbb Q\to\mathbb H^n$ given by $\Phi(x)=(\varphi_1(x),\dots,\varphi_n(x))$ is also a homomorphism and it maps the initial and terminal vertex of $\mathbb Q$ to $\mathbf a$ and $\mathbf b$, respectively. This shows that $\mathbf a$ and $\mathbf b$ are connected  in $\mathbb H^n$ (via $\Phi(\mathbb Q)$). Consequently, the whole set $(A^n\cup B^n)$
is connected in $\mathbb H^n$. As it intersects the diagonal, it follows that $(A^n\cup B^n)\subseteq\Delta_n$.
\end{proof}

In the next lemma, we prove that every polymorphism which is a WNU operation on the top and bottom levels can be modified to obtain a polymorphism satisfying the WNU property everywhere. In Corollary \ref{corollary:reduction_to_top_and_bottom} below we combine this fact with Theorem \ref{theorem:WNU=Taylor_manyWNU=SDmeet} to obtain the desired result. The assumption that $n>2$ is there only to avoid a technical nuisance; in fact, the claim is true for $n=2$ as well (see \cite{barto_csp_2013}).

\begin{lemma} \label{lemma:extending_WNUs}
Let $n\geq 3$ and let $\tau\in\IdPol_n(\mathbb H)$ be such that $\tau |_A$ and $\tau |_B$ are WNU operations on $A$ and $B$, respectively. Then there exists $\tau'\in\IdPol_n(\mathbb H)$ which is a WNU operation on $H$.
\end{lemma}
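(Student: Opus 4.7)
The natural approach is to cyclically symmetrize $\tau$. Writing $\sigma$ for the cyclic shift of coordinates $(x_1,\ldots,x_n)\mapsto(x_2,\ldots,x_n,x_1)$, I would set
\[
\tau'(\mathbf x) := \tau\bigl(\tau(\mathbf x),\, \tau(\sigma\mathbf x),\, \tau(\sigma^2\mathbf x),\, \ldots,\, \tau(\sigma^{n-1}\mathbf x)\bigr).
\]
Routinely, $\tau'$ is an idempotent polymorphism of $\mathbb H$, as a composition of such. When $\mathbf x=(x,\ldots,x,y,x,\ldots,x)\in A^n$ (resp.\ $B^n$), every inner $\tau(\sigma^i\mathbf x)$ equals the common WNU value by hypothesis, and by idempotency the outer $\tau$ preserves it; hence $\tau'|_A=\tau|_A$ and $\tau'|_B=\tau|_B$ on near-diagonal tuples.

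For WNU on all of $H$, one needs $\tau'$ to be invariant under $\sigma$ applied to near-diagonal inputs. My plan is to exploit the component structure of $\mathbb H^n$: coordinate permutations are digraph automorphisms of $\mathbb H^n$, so all positional permutations of $(x,\ldots,x,y,x,\ldots,x)$ lie in a single connected component $C$. On any component $C$ disjoint from $A^n\cup B^n$, the polymorphism $\tau'$ may be redefined independently of $\tau|_A$ and $\tau|_B$, and a coordinate-symmetric choice there yields WNU for free. The remaining case is $C\subseteq\Delta_n$, which (extending the preceding lemma via Lemma~\ref{lemma:minpaths}) forces $x$ and $y$ to lie at a common level. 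For such $x,y$ at an internal level, I would take a common cover $\mathbb Q$ of the minimal paths containing $x$ and $y$ (Lemma~\ref{lemma:minpaths}), lift it coordinate-wise to a path in $\mathbb H^n$ joining the near-diagonal input to tuples in $A^n$ and $B^n$, and argue that cyclic invariance at the two endpoints---already secured by $\tau'|_A$ and $\tau'|_B$---propagates inward through the rigidity of minimal-path homomorphisms.

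The main obstacle I anticipate is exactly this last propagation step: a homomorphism from $\mathbb Q$ to $\mathbb H$ is not in general uniquely determined by its endpoints, so position-independence at intermediate levels must be engineered rather than inherited. If the direct argument falls short, a standard remedy is to iterate the symmetrization---replacing $\tau$ by $\tau'$ and repeating---with finiteness of $H$ forcing termination at a cyclically invariant fixed point. The hypothesis $n\ge 3$ appears to be a convenience ensuring enough distinct cyclic shifts are available in the construction; the case $n=2$ is handled by a separate argument referenced in \cite{barto_csp_2013}.
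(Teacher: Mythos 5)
Your cyclic symmetrization
\[
\tau'(\mathbf x) = \tau\bigl(\tau(\mathbf x),\tau(\sigma\mathbf x),\dots,\tau(\sigma^{n-1}\mathbf x)\bigr)
\]
does not in general produce an operation invariant under $\sigma$, because the \emph{outer} $\tau$ is not coordinate-symmetric: $\tau'(\sigma\mathbf x)$ is $\tau$ applied to a cyclic shift of the arguments of $\tau'(\mathbf x)$. This collapses to the same value when the inner arguments $\tau(\sigma^i\mathbf x)$ all coincide, which happens for near-diagonal $\mathbf x\in A^n\cup B^n$ by hypothesis --- but exactly where the WNU property is actually in doubt, namely for near-diagonal $\mathbf x\in\Delta_n\setminus(A^n\cup B^n)$ (two vertices at the same intermediate level on one or two minimal paths), the inner values can differ and $\tau'$ is \emph{not} cyclically invariant. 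The iteration remedy does not fix this: finiteness of $\IdPol_n(\mathbb H)$ only gives eventual periodicity of $\tau\mapsto\tau'$, and even a genuine fixed point $\tau=\tau'$ asserts $\tau(\mathbf x)=\tau(\tau(\mathbf x),\dots,\tau(\sigma^{n-1}\mathbf x))$, which is a completely different identity from $\tau(\mathbf x)=\tau(\sigma\mathbf x)$.

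The ``propagation inward'' heuristic you flag as the weak point is indeed the crux, and in fact fails. The issue is precisely the one the paper singles out when explaining why one cannot set $\tau'(\mathbf x)=\tau(\mathbf x)$ on $\Delta_n\setminus(A^n\cup B^n)$: the polymorphism $\tau$ induces a homomorphism from $\Delta_n\cap\prod_i\mathbb P_{(a_i,b_i)}$ to $\mathbb P_{(\tau(\mathbf a),\tau(\mathbf b))}$, but there are many such homomorphisms, and even when $\mathbf a',\mathbf b'$ are coordinate permutations of $\mathbf a,\mathbf b$ with the same images under $\tau$, the two induced interior maps can differ. So invariance at the endpoints says nothing about invariance at the intermediate levels.

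What the paper does instead is redefine $\tau'$ by explicit cases. The crucial idea you are missing is how to handle a near-diagonal tuple $\mathbf x\in\Delta_n\setminus(A^n\cup B^n)$ whose coordinates lie on two distinct minimal paths, with exactly one coordinate (index $i$) on the ``odd'' path: one sets $\tau'(\mathbf x)=\tau(x_i,x_1,\dots,x_{i-1},x_{i+1},\dots,x_n)$, i.e.\ shuffles the odd coordinate to the front before applying $\tau$. This makes the value depend only on the odd-one-out vertex and the multiset of the rest, which is exactly the WNU property. On tuples with all coordinates on one minimal path (and, separately, on components of $\mathbb H^n$ disjoint from the diagonal), one replaces $\tau$ by a purely combinatorial choice (a fixed linear order or a designated coordinate), which is trivially symmetric. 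The remaining work --- showing the pieced-together $\tau'$ is still a polymorphism across case boundaries, using that $\tau$ is WNU on $A$ and $B$ --- is where the hypothesis actually gets used.
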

\begin{proof}
Let us fix an arbitrary linear order $\leq_E$ of the set $E$. We define the following linear order $\sqsubseteq$ on the set $H\setminus (A\cup B)$: for $x\in\mathbb P_{(a,b)}$ and $y\in \mathbb P_{(a',b')}$ we put $x\sqsubseteq y$ if
\begin{itemize}
\item $(a,b)<_E (a',b')$, or
\item $(a,b)=(a',b')$ and $\dist_{\mathbb P_{(a,b)}}(x,a)\leq\dist_{\mathbb P_{(a,b)}}(y,a)$.
\end{itemize}
The linear order $\sqsubseteq$ was tailored to satisfy the following claim.
\begin{claim}\label{claim:extending_WNUs_minimal_elements_edge}
  Let $x_i,y_i\in H\setminus (A\cup B)$ be such that $x_i\myrightarrow y_i$ in $\mathbb H$, for $i\in[n]$. Let $x$ and $y$ be the $\sqsubseteq$-minimal elements of $\{x_1,\dots,x_n\}$ and $\{y_1,\dots,y_n\}$, respectively. Then $x\myrightarrow y$ in $\mathbb H$.
\end{claim}
Claim \ref{claim:extending_WNUs_minimal_elements_edge} follows from the fact that the binary $\sqsubseteq$-minimum operation is a polymorphism of the subgraph of $\mathbb H$ induced on $H\setminus(A\cup B)$. We include a detailed proof.
\begin{subproof}[Proof of Claim \ref{claim:extending_WNUs_minimal_elements_edge}]

Note that for every $i\in[n]$, both $x_i$ and $y_i$ lie on the same minimal path. Therefore the same must be true for the $\sqsubseteq$-minimal elements: $x,y\in \mathbb P_{(a,b)}$, where $(a,b)$ is $\leq_E$-minimal among $e\in E$ such that
$$
\mathbb P_e\cap\{x_1,\dots,x_n,y_1,\dots,y_n\}\neq\emptyset.
$$
Let $i_1,\dots,i_k\in[n]$ be a list of all indices such that $x_{i_j},y_{i_j}\in \mathbb P_{(a,b)}$. Consider the following binary operation on $\mathbb P_{(a,b)}$, which always chooses the vertex closer to the initial vertex:
$$
u\wedge v=\begin{cases}
  u, \text{ if }\dist_{\mathbb P_{(a,b)}}(u,a)\leq\dist_{\mathbb P_{(a,b)}}(v,a),\\
  v, \text{ else}.
\end{cases}
$$
Note that $x=x_{i_1}\wedge x_{i_2}\wedge\dots\wedge x_{i_k}$ and $y=y_{i_1}\wedge y_{i_2}\wedge\dots\wedge y_{i_k}$. The existence of the edge $x\myrightarrow y$ in $\mathbb H$ follows from the easy fact that $\wedge$ is a polymorphism of $\mathbb P_{(a,b)}$ (this is true for an arbitrary oriented path, see \cite[Theorem 14]{larose_algebra_2017}).
\end{subproof}

We are now ready to define $\tau'$. We split the definition into several cases and subcases. Fix $\mathbf x\in H^n$.
\begin{enumerate}
\item If $\mathbf x\in A^n\cup B^n$, then we set $\tau'(\mathbf x)=\tau(\mathbf x)$.
\item If $\mathbf x\in\Delta_n\setminus (A^n\cup B^n)$, then
\begin{enumerate}
\item if $\{x_1,\dots,x_n\}\subseteq \mathbb P_{(a,b)}$ for some $(a,b)\in E$, then we define $\tau'(\mathbf x)$ to be the $\sqsubseteq$-minimal element from $\{x_1,\dots,x_n\}$,
\item if there exists $i\in[n]$ and $e\neq e'\in E$ such that $x_i\in \mathbb P_e$ and $x_j\in \mathbb P_{e'}$ for all $j\neq i$, then we define
$$
\tau'(\mathbf x)=\tau(x_i,x_1,\dots,x_{i-1},x_{i+1},\dots,x_n),
$$
\item in all other cases we set $\tau'(\mathbf x)=\tau(\mathbf x)$.
\end{enumerate}
\item If $\mathbf x\notin\Delta_n$, then
\begin{enumerate}
\item if $\lvl(x_1)=\lvl(x_2)=\dots=\lvl(x_n)$, then we define $\tau'(\mathbf x)$ to be the $\sqsubseteq$-minimal element from $\{x_1,\dots,x_n\}$,
\item if there exists $i\in[n]$ and $k\neq l$ such that $\lvl(x_i)=k$ and $\lvl(x_j)=l$ for all $j\neq i$, then we define $\tau'(\mathbf x)=x_i$,
\item in all other cases we define $\tau'(\mathbf x)=x_1$.
\end{enumerate}

\end{enumerate}

Let us first comment on subcase (2b) of the construction. Since $\tau$ is a polymorphism, for any $(a_i,b_i)\in E$, $i\in[n]$, it induces a homomorphism from $\Delta_n\cap\prod_{i=1}^n\mathbb P_{(a_i,b_i)}$ (as an induced subgraph of $\mathbb H^n$) to $\mathbb P_{(\tau(\mathbf a),\tau(\mathbf b))}$. However, typically there are many such homomorphisms. Even if $\tau(\mathbf a)=\tau(\mathbf{a'})$, $\tau(\mathbf b)=\tau(\mathbf{b'})$
and $\mathbf{a'},\mathbf{b'}$ are just permutations of $\mathbf a,\mathbf b$, the two corresponding homomorphisms induced by $\tau$ can be different. That is why we cannot simply define $\tau'(\mathbf x)=\tau(\mathbf x)$ in subcase (2b); the WNU property might not hold.

We divide the proof of the fact that $\tau'$ is a WNU polymorphism of $\mathbb H$ into two separate claims.

\begin{claim}\label{claim:extending_WNUs_polymorphism}
$\tau'$ is a polymorphism of $\mathbb H$.
\end{claim}
\begin{subproof}[Proof of Claim \ref{claim:extending_WNUs_polymorphism}]
Let $(\mathbf x,\mathbf y)$ be an edge in $\mathbb H^n$. For every $i\in[n]$ let $e_i=(a_i,b_i)\in E$ be such that $x_i,y_i\in \mathbb P_{e_i}$. We need to show that $\tau'(\mathbf x)\myrightarrow\tau'(\mathbf y)$ in~$\mathbb H$.

We divide the proof of this claim into separate arguments depending on which cases of the construction were used to define $\tau'(\mathbf x)$ and $\tau'(\mathbf y)$. There are three options:
\begin{enumerate}[I.]
  \item The tuple $\mathbf x$ falls under case (1) of the construction, $\mathbf y$ falls under one of the subcases of~(2). This happens when $\mathbf x=\mathbf a$ (the tuple of initial vertices of the paths $\mathbb P_{e_i}$). For every $i\in[n]$, $y_i$ must be the unique vertex from $\mathbb P_{e_i}$ such that $x_i=a_i\myrightarrow y_i$ in $\mathbb H$. We split the argument depending on the subcase of the construction applied to $\tau'(\mathbf y)$:

  \begin{enumerate}[(2a)]
    \item  In this subcase, $e_1=\dots=e_n=e$ for some $e=(a,b)\in E$ and $y_1=\dots=y_n=\tau'(\mathbf y)=y$,
      where $y$ is the unique vertex from $\mathbb P_e$ such that $a\myrightarrow y$ in $\mathbb H$. Hence $\tau'(\mathbf x)\myrightarrow\tau'(\mathbf y)$ in $\mathbb H$.

    \item We have that
    $$\mathbf x=\mathbf a=(a,\dots,a,a',a,\dots,a)$$
    for some $a,a'\in A$ where $a'$ is in the $i$th coordinate (possibly $i=1$ or $i=n$), and
    $$\mathbf y=(y,\dots,y,y',y,\dots,y).$$
    Using both that $\tau$ satisfies the WNU property on $A$ and that it is a polymorphism of $\mathbb H$, we get that
    $$\tau'(\mathbf x)=\tau(\mathbf a)=\tau(a',a,\dots,a)\myrightarrow\tau(y',y,\dots,y)=\tau'(\mathbf y)\text{ in }\mathbb H.$$

    \item In this subcase, $\tau'(\mathbf x)=\tau(\mathbf a)\myrightarrow\tau(\mathbf y)=\tau'(\mathbf y)$ in $\mathbb H$ follows from the fact that $\tau$ is a polymorphism of $\mathbb H$.
  \end{enumerate}

  \item The tuple $\mathbf y$ falls under case (1) of the construction, $\mathbf x$ falls under one of the subcases of~(2). This happens when $\mathbf y=\mathbf b$ (the tuple of terminal vertices of the paths $\mathbb P_{e_i}$). The proof is analogous to the previous option.

  \item In all other situations, both $\mathbf x$ and $\mathbf y$ must fall under the same case and subcase of the construction, one of the subcases of (2) or (3). (To see why, note that $\mathbf x\myrightarrow\mathbf y$ in $\mathbb H^n$ implies that both $\mathbf x$ and $\mathbf y$ lie in the same connected component of $\mathbb H^n$, and that the path $\mathbb P_{e_i}$ is the same for $x_i$ and $y_i$, for every $i\in[n]$.) Each subcase requires a slightly different argument:
  \begin{itemize}
    \item[(2a)] In this subcase, $\tau'(\mathbf x)$ is the $\sqsubseteq$-minimal element of $\{x_1,\dots,x_n\}$ and $\tau'(\mathbf y)$ is the $\sqsubseteq$-minimal element of $\{y_1,\dots,y_n\}$. It follows from Claim~\ref{claim:extending_WNUs_minimal_elements_edge} that $\tau'(\mathbf x)\myrightarrow\tau'(\mathbf y)$ in $\mathbb H$.

    \item[(3a)] Similarly as in subcase (2a), we use Claim~\ref{claim:extending_WNUs_minimal_elements_edge}.

    \item[(2b)] Note that the distinguished coordinate $i\in[n]$ and the paths $\mathbb P_e,\mathbb P_{e'}$ are the same for both $\mathbf x$ and $\mathbf y$. The construction says that
    \begin{align*}
      \tau'(\mathbf x)&=\tau(x_i,x_1,\dots,x_{i-1},x_{i+1},\dots,x_n),\\
      \tau'(\mathbf y)&=\tau(y_i,y_1,\dots,y_{i-1},y_{i+1},\dots,y_n).
    \end{align*}
    Therefore $\tau'(\mathbf x)\myrightarrow\tau'(\mathbf y)$ in $\mathbb H$
    follows from the fact that $\tau$ is a polymorphism of $\mathbb H$.

    \item[(3b)] Similarly as in subcase (2b), the distinguished coordinate $i\in[n]$ is the same for both $\mathbf x$ and $\mathbf y$. Therefore $\tau'(\mathbf x)=x_i$, $\tau'(\mathbf y)=y_i$, and  we know that $x_i\myrightarrow y_i$ in $\mathbb H$.

    \item[(2c)] This subcase is easy: $\tau'(\mathbf x)=\tau(\mathbf x)$, $\tau'(\mathbf y)=\tau(\mathbf y)$, and $\tau$ is a polymorphism of~$\mathbb H$.

    \item[(3c)] This subcase is also easy: $\tau'(\mathbf x)=x_1$, $\tau'(\mathbf y)=y_1$, and  $x_1\myrightarrow y_1$ in~$\mathbb H$.
  \end{itemize}
\end{enumerate}
We proved that in all possible situations, $\tau'(\mathbf x)\myrightarrow\tau'(\mathbf y)$ in $\mathbb H$. Therefore $\tau'$ is a polymorphism of $\mathbb H$.
\end{subproof}

\begin{claim}\label{claim:extending_WNUs_WNU}
$\tau'$ is a WNU operation on H.
\end{claim}
\begin{subproof}[Proof of Claim \ref{claim:extending_WNUs_WNU}]
Let $x,y\in H$ be arbitrary. Note that all of the tuples
$$(y,x,x,\dots,x),(x,y,x,\dots,x),\dots,(x,x,\dots,x,y)$$
fall under the same case and subcase of the construction, and that it can be neither (2c) nor (3c). In case (1) the WNU property follows from the fact that $\tau$ is a WNU operation on $A$ and $B$ while in cases (2a) and (3a) from the fact that the construction in these cases is independent of order and repetition of elements. In case (2b) the result is $\tau(y,x,\dots,x)$ for all the tuples in question while in case (3b) the result is always $y$.
\end{subproof}
We proved that $\tau'$ is indeed a WNU polymorphism of $\mathbb H$ which concludes the proof of the lemma.
\end{proof}

\begin{corollary} \label{corollary:reduction_to_top_and_bottom}
If both $A$ and $B$ are $\mathrm{SD}(\wedge)$, then $\alg\mathbb H$ is $\mathrm{SD}(\wedge)$.
\end{corollary}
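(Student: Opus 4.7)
The plan is to bridge the hypothesis (that $A$ and $B$ are each $\mathrm{SD}(\wedge)$) and the conclusion (that $\alg\mathbb H$ is $\mathrm{SD}(\wedge)$) by combining the characterization of $\mathrm{SD}(\wedge)$ via existence of WNUs of all sufficiently large arities (Theorem~\ref{theorem:WNU=Taylor_manyWNU=SDmeet}) with the lifting result of Lemma~\ref{lemma:extending_WNUs}, using Lemma~\ref{lemma:SDmeet_closed_under_products} as the glue. The main idea is to pass through the product subuniverse $A\times B\leq(\alg\mathbb H)^2$, which neatly packages the requirement that a single polymorphism of $\mathbb H$ restricts to a WNU on both $A$ and $B$ at the same time.

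First, I would apply Lemma~\ref{lemma:SDmeet_closed_under_products} to conclude that $A\times B$ is $\mathrm{SD}(\wedge)$. Then by Theorem~\ref{theorem:WNU=Taylor_manyWNU=SDmeet}, there exists $n_0$ such that for every $n\geq n_0$ the algebra $A\times B$ admits an $n$-ary WNU term operation. The crucial observation is that any term operation of $A\times B$ is the restriction of $\tau\times\tau$ for some $\tau\in\IdPol_n(\mathbb H)$, and since $\tau\times\tau$ acts componentwise, the WNU identities on $A\times B$ are equivalent to $\tau|_A$ being a WNU on $A$ \emph{and} $\tau|_B$ being a WNU on $B$, realized by the \emph{same} polymorphism $\tau$ of $\mathbb H$.

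Next, for each $n\geq\max(n_0,3)$, I would feed this $\tau$ into Lemma~\ref{lemma:extending_WNUs} to obtain $\tau'\in\IdPol_n(\mathbb H)$ that is a WNU on all of $H$. Applying the reverse direction of Theorem~\ref{theorem:WNU=Taylor_manyWNU=SDmeet} then gives that $\alg\mathbb H$ is $\mathrm{SD}(\wedge)$, as desired. There is no significant obstacle here: the real technical effort has already been spent in Lemma~\ref{lemma:extending_WNUs}; the only point to be careful about is the translation between WNUs on the product subuniverse $A\times B$ and pairs of WNUs on $A$ and $B$ coming from a common polymorphism.
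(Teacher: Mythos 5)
Your argument matches the paper's proof essentially verbatim: pass to $A\times B\leq(\alg\mathbb H)^2$ via Lemma~\ref{lemma:SDmeet_closed_under_products}, extract WNUs of all large arities via Theorem~\ref{theorem:WNU=Taylor_manyWNU=SDmeet}, observe these come from polymorphisms $\tau$ of $\mathbb H$ restricting to WNUs on both $A$ and $B$, lift via Lemma~\ref{lemma:extending_WNUs}, and conclude by Theorem~\ref{theorem:WNU=Taylor_manyWNU=SDmeet} again. Correct and the same route.
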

\begin{proof}
By Lemma \ref{lemma:SDmeet_closed_under_products}, $A\times B$ (which is a subuniverse of $(\alg\mathbb H)^2$) is $\mathrm{SD}(\wedge)$ as well. Hence, by Theorem \ref{theorem:WNU=Taylor_manyWNU=SDmeet}, there exists $n_0$ such that for every $n\geq n_0$ there exists $\tau_n\in\IdPol_n(\mathbb H)$ such that $(\tau_n\times\tau_n)|_{A\times B}$ is a WNU operation on $A\times B$. This implies that the restrictions of $\tau_n$ to $A$ and $B$ are WNU operations. Using Lemma \ref{lemma:extending_WNUs} we obtain, for every $n\geq\mathrm{max}(n_0,3)$, a WNU operation $\tau'_n\in\IdPol_n(\mathbb H)$. The proof concludes by another application of Theorem~\ref{theorem:WNU=Taylor_manyWNU=SDmeet}.
\end{proof}

\subsection{Singleton absorbing subuniverse} \label{subsection:singleton_absorbing}

Our next step is to prove that either $A$ or $B$ has a singleton absorbing subuniverse, where the absorbing operation is a WNU operation. This is the one and only place where we use the assumption that $\alg\mathbb H$ is Taylor.

Since $\alg\mathbb H$ is Taylor, by Theorem \ref{theorem:WNU=Taylor_manyWNU=SDmeet} there exists a WNU operation $\omega\in\IdPol(\mathbb H)$. Let $\mathbin{\circ}:H^2\to H$ be the \emph{binary polymer} of the WNU operation $\omega$, that is,
$$
x\mathbin{\circ}y=\omega(x,x,\dots,y)=\dots=\omega(y,x,\dots,x)
$$
for $x,y\in H$. Note that $\circ\in\IdPol_2(\mathbb H)$.

We can and will assume that $\omega$ is \emph{special} in the sense of \cite[Definition 6.2]{barto_constraint_2009}, that is, satisfies $x\mathbin{\circ}(x\mathbin{\circ}y)=x\mathbin{\circ}y$. (Here the word \emph{special} is unrelated to our definition of \emph{special} trees.) This property can be enforced by an iterated composition of $\omega$ with itself (i.e., $\omega\compose\omega\compose\dots\compose\omega$, $|H|!$-times, see \cite[Lemma 6.4]{barto_constraint_2009}).

For a subset $C\subseteq A$ we define the \emph{$E$-neighbourhood} of $C$, denoted by $E_+(C)$, to be the set $\{b\in B\mid (c,b)\in E\text{ for some }c\in C\}$. Similarly, the $E$-neighbourhood of $D\subseteq B$ is the set $E_-(D)=\{a\in A\mid (a,d)\in E\text{ for some }d\in D\}$. For brevity we write $E_+(c)$, $E_-(d)$ instead of $E_+(\{c\})$, $E_-(\{d\})$. Moreover, for every $k\geq 0$, $C\subseteq A$ and $D\subseteq B$ we inductively define the sets $E_k(C)$ and $E_k(D)$ as follows:
\begin{itemize}
\item $E_0(C)=C$ and  $E_0(D)=D$,
\item $E_1(C)=E_+(C)$ and $E_1(D)=E_-(D)$, and
\item $E_k(C)=E_1(E_{k-1}(C))$ and $E_k(D)=E_1(E_{k-1}(D))$ for $k>1$.
\end{itemize}
Note that the above definition can be reformulated as follows:
$$
E_k(C)=\{x\in A\cup B\mid (\exists\,c\in C)\ \dist_\mathbb T(x,c)\leq k\ \&\ \dist_\mathbb T(x,c) \equiv k \,(\bmod\ 2)\},
$$
and similarly for $E_k(D)$. We will frequently use the following easy facts (as well as the obvious ``dual'' versions for $D\leq D'\leq B$), which are all consequences of the fact that $E\leq(\alg\mathbb H)^2$. We leave the proof to the reader.
\begin{observation} \label{observation_basic_properties_of_E_k}
If $C\leq C'\leq A$, then the following holds:
\begin{itemize}
\item $E_+(C)\leq E_+(C')\leq B$,
\item $E_k(C)\leq A$ for $k$ even and $E_k(C)\leq B$ for $k$ odd,
\item if $k\leq l$ and $l-k$ is even, then $E_k(C)\leq E_l(C)$,
\item if $C\neq 0$, then there exists $k$ such that $E_k(C)=A$ and $E_{k+1}(C)=B$, and
\item if $C\abs C'$ via some $\tau\in\IdPol(\mathbb H)$, then for every $k\geq 0$, $E_k(C)\abs E_k(C')$ as well and, moreover, the absorption is via the same operation $\tau$. \footnote{Technically, the absorbing operation is $\tau |_{C'}$ in the first case while it is $\tau |_{E_k(C')}$ in the second case, but we will neglect this formality.}
\end{itemize}
\end{observation}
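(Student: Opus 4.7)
The plan is to walk through the five bullets in order, exploiting repeatedly the two workhorses already established: $A, B \leq \alg\mathbb H$ and $E \leq (\alg\mathbb H)^2$.

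For the first bullet, I would note that $E_+(C)$ is just the projection $\{b \in B : \exists a \in C,\ (a,b) \in E\}$, which is primitive positive definable from $C$, $B$ and $E$ and hence a subuniverse of $\alg\mathbb H$. Concretely, given $b_1,\dots,b_n\in E_+(C)$ and an $n$-ary $f\in\IdPol(\mathbb H)$, pick $a_i\in C$ with $(a_i,b_i)\in E$; then $f(\mathbf a)\in C$ and $(f(\mathbf a),f(\mathbf b))\in E$, so $f(\mathbf b)\in E_+(C)$. Monotonicity in $C$ is immediate. The second bullet then follows by an easy induction on $k$, alternating the first bullet with its dual.

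For the third bullet it suffices to establish $E_k(C)\subseteq E_{k+2}(C)$ and iterate. The key observation is that the underlying height-$1$ tree $\mathbb T$ is connected, so every $a\in A$ has some $b\in B$ with $a\dashrightarrow b$, and dually; thus any $a\in E_k(C)\leq A$ (for $k$ even) picks up such a $b\in E_{k+1}(C)$ and then $a\in E_{k+2}(C)$. For the fourth bullet, fix any $c\in C$; since $\mathbb T$ is connected and bipartite with parts $A$ and $B$, every vertex of $\mathbb T$ is at finite distance from $c$ with the parity of that distance determined by its bipartite class. Hence for any sufficiently large even $k$ we have $E_k(\{c\})=A$ and $E_{k+1}(\{c\})=B$, and monotonicity promotes this to the same for $C\supseteq\{c\}$.

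The main content is the fifth bullet, which I would prove by induction on $k$; the base case $k=0$ is the hypothesis. For the inductive step assume (say, $k$ even) that $E_k(C)\abs E_k(C')$ via an $n$-ary operation $\tau$. Given $\mathbf b\in E_{k+1}(C')^n$ with $b_j\in E_{k+1}(C)$, pick witnesses $a_i'\in E_k(C')$ with $a_i'\dashrightarrow b_i$ for $i\neq j$ and $a_j\in E_k(C)$ with $a_j\dashrightarrow b_j$. The tuple $(a_1',\dots,a_{j-1}',a_j,a_{j+1}',\dots,a_n')$ has one coordinate in $E_k(C)$ and the rest in $E_k(C')$, so by the induction hypothesis $\tau$ sends it into $E_k(C)$; since $\tau$ preserves $E$, this $\tau$-value is a predecessor of $\tau(\mathbf b)$, forcing $\tau(\mathbf b)\in E_+(E_k(C))=E_{k+1}(C)$. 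The odd-$k$ step is symmetric. The only subtle point is the need to use the \emph{same} $\tau$ at every level, which is exactly what this construction delivers.
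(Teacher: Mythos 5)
The paper explicitly leaves this observation without proof (``We leave the proof to the reader''), so there is no authorial argument to compare against; your plan follows the natural route and the first four bullets are correct as written.

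In the fifth bullet, however, the absorption condition is applied with the roles of the two sets reversed. Recall that $E_k(C)\abs E_k(C')$ via $\tau$ asserts: whenever \emph{all but one} coordinate of the input tuple lies in the small set $E_k(C)$ and the one remaining coordinate is allowed to range over $E_k(C')$, the output lies in $E_k(C)$. To push this to level $k+1$ (say $k$ even) you must therefore take $\mathbf b$ with $b_i\in E_{k+1}(C)$ for all $i\neq j$ and a single $b_j\in E_{k+1}(C')$, then choose preimages $a_i\in E_k(C)$ with $a_i\dashrightarrow b_i$ for $i\neq j$ and $a_j\in E_k(C')$ with $a_j\dashrightarrow b_j$. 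The resulting tuple $\mathbf a$ has all but one coordinate in $E_k(C)$, which is exactly the shape the inductive hypothesis handles, giving $\tau(\mathbf a)\in E_k(C)$; since $\tau$ preserves $E$, $\tau(\mathbf b)\in E_+(E_k(C))=E_{k+1}(C)$, and this finishes the step. Your write-up instead takes the opposite configuration---one coordinate in the small set $E_k(C)$ and the remaining $n-1$ coordinates in the large set $E_k(C')$---and asserts that $\tau$ still lands in $E_k(C)$. The absorption hypothesis gives no information whatsoever about such a tuple, so this inference is unjustified. Once the roles are swapped the rest of the inductive step goes through exactly as you describe.
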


We are now ready to prove that either $A$ or $B$ has a singleton absorbing subuniverse and, moreover, that this absorption is realized via the special WNU operation~$\omega$. The proof spans the next two lemmata.

\begin{lemma} \label{lemma:WNUabsorbing_E2o}
There exists $o\in A\cup B$ such that $\{o\}\abs E_2(o)$ via $\omega$.
\end{lemma}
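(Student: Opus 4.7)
I would begin by verifying the structural facts that make the statement well-posed. For every $o\in A\cup B$, $E_2(o)$ is a subuniverse of $\alg\mathbb H$: whenever $C\le A$ and $D\le B$, both $E_+(C)\le B$ and $E_-(D)\le A$, because an idempotent polymorphism applied coordinatewise to out- or in-neighbours of a common vertex $c$ yields another such neighbour (using $f(c,\ldots,c)=c$). Moreover, if $x\in E_2(o)$ with common $E$-neighbour $b$, then $o\circ x\dashrightarrow b\circ b=b$, so $o\circ E_2(o)\subseteq E_2(o)$. Hence the unary polynomial $f_o(y):=o\circ y=\omega(o,\ldots,o,y)$ restricts to an endomap of $E_2(o)$, and the special identity $o\circ(o\circ y)=o\circ y$ is exactly $f_o\circ f_o=f_o$. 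Consequently $f_o(E_2(o))$ is the set of $\circ$-fixed points of $o$ inside $E_2(o)$ (always containing $o$), and, via the WNU axiom, the desired absorption $\{o\}\abs E_2(o)$ via $\omega$ is equivalent to $f_o(E_2(o))=\{o\}$.

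My plan is to argue by contradiction. Otherwise, for every $o$ there is some $g(o)\in E_2(o)\setminus\{o\}$ with $o\circ g(o)=g(o)$; iterating $g$ and using finiteness, a shortest cycle $o_0\to o_1\to\cdots\to o_{m-1}\to o_0$ appears, with $m\ge 2$, pairwise distinct $o_j$'s, $o_j\circ o_{j+1}=o_{j+1}$ modulo $m$, and $o_{j+1}\in E_2(o_j)\setminus\{o_j\}$. Since $\circ$ preserves levels, the cycle sits in a single level, say $A$. For each $j$ pick a common $E$-neighbour $b_j\in B$ of $o_j$ and $o_{j+1}$; these give a closed walk $o_0\,b_0\,o_1\,\ldots\,b_{m-1}\,o_0$ in the tree $\mathbb T$. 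Because $\mathbb T$ is acyclic, every used tree edge must be traversed equally often up and down; combined with the distinctness of the $o_j$'s, matching each up-edge $(o_j,b_j)$ with the corresponding down-edge $(o_{j+1},b_j)=(o_j,b_{j-1})$ forces $b_j=b_{j-1}$ for every $j$. Thus all $b_j$ collapse to a single $b\in B$, and the whole cycle lies inside the subuniverse $E_-(b)\le A$.

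The final collapse is the main obstacle. Inside $E_-(b)$, I would invoke the rigidity inherent in the special-tree structure: by Lemma~\ref{lemma:minpaths} choose a minimal path $\mathbb Q$ of height $h$ that maps onto every $\mathbb P_{(o_j,b)}$ via a homomorphism $\varphi_j$, and apply $\omega$ coordinatewise along $\mathbb Q$ to the $n$-tuples $(\varphi_j,\ldots,\varphi_j,\varphi_{j+1})$. The resulting map is a homomorphism $\mathbb Q\to\mathbb H$ whose endpoints are dictated by the cycle identities and whose image must (since $\mathbb H$ is a tree) factor through the unique $\mathbb P_{(o_{j+1},b)}$. Tracking these factorisations around the cycle, together with the special identity and WNU symmetry, should force the identification of two non-consecutive $o_j$'s — giving either a shorter cycle (contradicting minimality) or, in the base case $m=2$, an equality of the two cycle vertices themselves. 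Implementing this collapse rigorously is the principal technical difficulty, and is precisely where the special-tree structure must be used beyond the bare Taylor hypothesis.
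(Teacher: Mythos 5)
The proposal takes a genuinely different route from the paper, and unfortunately the route has a genuine gap at its crucial final step.

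Your setup is fine: $E_2(o)$ is a subuniverse, $o\circ E_2(o)\subseteq E_2(o)$, and by speciality of $\omega$ the absorption $\{o\}\abs E_2(o)$ via $\omega$ is equivalent to all of $E_2(o)$ being mapped to $o$ by $y\mapsto o\circ y$. Assuming no $o$ works, you get a choice function $g$ with $g(o)\in E_2(o)\setminus\{o\}$ and $o\circ g(o)=g(o)$, and iterating $g$ yields a cycle $o_0,\dots,o_{m-1}$ of pairwise distinct elements in a single level with $o_j\circ o_{j+1}=o_{j+1}$. Your next step is correct and is a nice observation, although the exposition is garbled: in the closed walk $o_0 b_0 o_1 b_1 \cdots b_{m-1} o_0$ in $\mathbb T$, each $o_j$ appears exactly once, so only the two steps through $o_j$ can traverse an edge incident to $o_j$; if $b_{j-1}\neq b_j$ then each of $\{o_j,b_{j-1}\}$ and $\{o_j,b_j\}$ is traversed exactly once in a closed walk in a tree, which is impossible. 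Hence all $b_j$ coincide and the whole cycle sits inside one $E_-(b)$. Up to here your argument is correct.

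The gap is the final step, which you explicitly flag as undone: deriving a contradiction from a cycle $o_0\gg o_1\gg\cdots\gg o_{m-1}\gg o_0$ inside $E_-(b)$. This is not a minor finishing detail — the cycle is not intrinsically contradictory. As a pure algebra on the set $\{o_0,\dots,o_{m-1}\}$, one can easily write down a WNU satisfying exactly these identities (e.g.\ $\omega(x,\dots,x,y)=y$ and $\omega$ equal to first projection on rainbow tuples), so any obstruction must come from the embedding into the tree $\mathbb H$ and from $\omega$ being a polymorphism of it — precisely the part you do not have. The sketch of applying $\omega$ coordinatewise to homomorphisms $\varphi_j:\mathbb Q\to\mathbb P_{(o_j,b)}$ does not obviously force an identification of non-consecutive $o_j$'s; you give no mechanism for that collapse.

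The paper avoids this dead end entirely by never allowing a cycle to form. It works with sequences $\langle u_0,u_1,\dots,u_k\rangle$ constrained so that $\mathrm{dist}_E(u_0,u_i)=i$; since the distance strictly increases along the sequence, repetition (and hence cycling) is ruled out \emph{by construction}, and the contradiction comes from extending a maximal such sequence by two more steps. The only thing that must be ruled out is the extension ``turning back'' ($u_k'=u_{k-2}$), and this is handled by a short and clever use of the WNU property applied across one edge of $\mathbb T^n$, forcing a tree-uniqueness violation. That is the argument your proposal is missing an analogue of. In summary: your first three paragraphs are correct (modulo the confusing ``$(o_{j+1},b_j)=(o_j,b_{j-1})$'' remark), but the proposal as a whole is incomplete at exactly the step that carries the mathematical weight of the lemma.
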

\begin{proof}
Suppose for contradiction that no such element exists. It follows that for every $u\in A\cup B$ there exists $w\in E_2(u)$ such that $u\mathbin{\circ}w=v\neq u$. Since the WNU operation $\omega$ is special, we have that $u\mathbin{\circ}v=u\mathbin{\circ}(u\mathbin{\circ}w)=u\mathbin{\circ}w=v$. Consider the binary relation $\gg$ on $A\cup B$ defined by setting $x\gg y$ if and only if $y\in E_2(x)\setminus\{x\}$ and $x\mathbin{\circ}y=y$. We have proved that for every $u\in A\cup B$ there exists $v$ such that $u\gg v$.

Let $k$ be maximal such that there exists a sequence $\langle u_0 u_1 \dots u_k\rangle$ of elements of $A\cup B$ with the following properties:
\begin{enumerate}
\item $\dist_\mathbb T(u_0,u_i)=i$ for all $i\in[k]$, and
\item $u_i\gg u_{i+2}$ for all $0\leq i\leq k-2$.
\end{enumerate}
Note that (1) ensures that the sequence is non-repeating and thus, by finiteness of $A\cup B$, such a maximal $k$ exists. The previous paragraph shows that $k\geq 2$: just take $\langle a,b,a'\rangle$ for any $a,a'\in E_-(b)$ such that $a\gg a'$.

Let us assume that $u_k\in A$; the proof for $u_k\in B$ is analogous. Let $u_k'\in A$ and $u_{k+1}'\in B$ be such that $u_{k-1}\gg u_{k+1}'$ and $u_{k-1},u_{k+1}'\in E_+(u_k')$ (see Figure \ref{figure:extending_the_sequence}). We will prove that the sequence $\langle u_0 u_1 \dots u_{k-1} u_k' u_{k+1}'\rangle$ also satisfies properties (1) and (2); a contradiction with maximality of $k$.

\begin{figure}[ht!]\centering
\begin{tikzpicture}
  [thick, on grid, node distance=1cm and 2cm,
  blacksquare/.style={rectangle,draw,outer sep=2pt,inner sep=0pt,minimum size=1.8mm,fill},
  whitesquare/.style={rectangle,draw,outer sep=2pt,inner sep=0pt,minimum size=1.8mm},
  >=latex]
  \node [whitesquare,label=left:{$\cdots$}] (-1)  {};
  \node [blacksquare,right=of -1,label=below:{$u_{k-2}$}] (0) {};
\node [whitesquare,right=of 0,label=below:{$u_{k-1}$}] (1)  {};
\node [blacksquare,above right=of 1,label=below:{$u_k$}] (2)  {};
\node [blacksquare,below right=of 1,label=below:{$u_k'$}] (3)  {};
\node [whitesquare,right=of 3,,label=below:{$u_{k+1}'$}] (4)  {};
\draw[->,decorate,decoration={snake,segment length=1.5mm,amplitude=0.3mm,post length = 2mm}] (0) -- (-1);
\draw[->,decorate,decoration={snake,segment length=1.5mm,amplitude=0.3mm,post length = 2mm}] (0) -- (1);
\draw[->,decorate,decoration={snake,segment length=1.5mm,amplitude=0.3mm,post length = 2mm}] (2) -- (1);
\draw[->,decorate,decoration={snake,segment length=1.5mm,amplitude=0.3mm,post length = 2mm}] (3) -- (1);
\draw[->,decorate,decoration={snake,segment length=1.5mm,amplitude=0.3mm,post length = 2mm}] (3) -- (4);
\end{tikzpicture}
\caption{Extending the sequence $\langle u_0 u_1 \dots u_k\rangle$.}\label{figure:extending_the_sequence}
\end{figure}
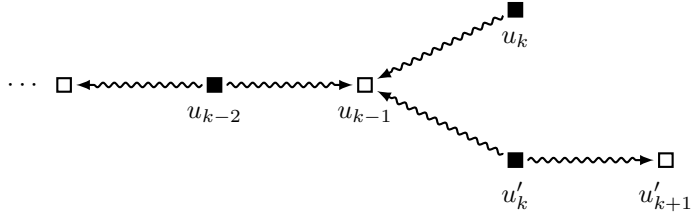

First we prove (1). From $u_{k-1}\gg u_{k+1}'$ we get that $\dist_\mathbb T(u_{k-1},u_k')=1$ and $\dist_\mathbb T(u_{k-1},u_{k+1}')=2$. Since $\mathbb T$ is a tree, it suffices to rule out the possibility that $u_k'=u_{k-2}$. In that case $(u_{k-2},u_{k+1}')\in E$, $(u_{k-2},u_{k-1})\in E$ and $(u_k,u_{k-1})\in E$ would give
$$
(\omega(u_{k-2},u_{k-2},\dots,u_{k-2},u_k),\omega(u_{k+1}',u_{k-1},\dots,u_{k-1},u_{k-1}))\in E.
$$
The left hand side is $u_{k-2}\mathbin{\circ}u_k=u_k$ while the right hand side is $u_{k-1}\mathbin{\circ}u_{k+1}'=u_{k+1}'$; and so we get $(u_k,u_{k+1}')\in E$.
But $u_k\in E_-(u_{k-1})\cap E_-(u_{k+1}')$ would imply that $u_k=u_k'=u_{k-2}$ which contradicts $u_{k-2}\gg u_k$.

To prove (2) we only need to establish $u_{k-2}\gg u_k'$. From $(u_{k-2},u_{k-1})\in E$, $(u_k',u_{k+1}')\in E$ and the fact that $\circ$ preserves $E$ we get
$$
(u_{k-2}\mathbin{\circ}u_k', \underbrace{u_{k-1}\mathbin{\circ}u_{k+1}'}_{=u_{k+1}'})\in E.
$$
On the other hand, $\{u_{k-2},u_k'\}\subseteq E_-(u_{k-1})$, which is a subuniverse, and thus $(u_{k-2}\mathbin{\circ}u_k',u_{k-1})\in E$. It follows that $u_{k-2}\mathbin{\circ}u_k'=u_k'$; and $u_{k-2}\neq u_k'$ is proved above.
\end{proof}

Fix $o\in A\cup B$ given by the previous lemma. To simplify the exposition we choose that $o\in A$. The proofs are essentially the same in the other case. (Moreover, note that reversing edges of $\mathbb H$ does not change $\alg\mathbb H$.)

Since $\mathbb H$ is an oriented tree, it follows that for every $x,y\in H$ there exists a unique oriented path connecting $x$ to $y$ in $\mathbb H$. We denote this path by $\Path_\mathbb H(x,y)$. For the purpose of the proof of Lemma \ref{lemma:singleton_absorbing} below as well as Lemmata \ref{lemma:star_absorbs} and \ref{lemma:binary_polymorphisms_on_C} from Subsection \ref{subsection:Eneighbourhoods}, we define a partial order $\preceq$ on $H$ by setting $u\preceq v$ if and only if $u\in\Path_\mathbb H(o,v)$.

Note that $o$ is the least element in this order. Furthermore, for $u,v\in A\cup B$, $u\preceq v$ implies $\dist_\mathbb T(o,u)\leq\dist_\mathbb T(o,v)$. We will also write $u\prec v$ to mean $u\preceq v$ and $u\neq v$. (If we forget the orientation of edges and consider $\mathbb H$ as a rooted tree with root $o$, then $u \prec v$ means that $v$ is a \emph{descendant} of $u$.)

\begin{lemma} \label{lemma:singleton_absorbing}
If $a,a'\in A$ and $a\preceq a'$, then $a \mathbin{\circ} a'=a$ (and similarly for $b,b'\in B$). In particular, $\{o\}\abs A$ via $\omega$.
\end{lemma}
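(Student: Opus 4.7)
The plan is to prove both parts of the lemma by contradiction via a single induction on $m := \mathrm{dist}_E(o, a')/2$, combining three ingredients: the previous lemma (Lemma~\ref{lemma:WNUabsorbing_E2o}, giving $\{o\} \abs E_2(o)$ via $\omega$); Observation~\ref{observation_basic_properties_of_E_k}, which lifts this to $E_{2k}(\{o\}) \abs E_{2k+2}(\{o\})$ via the same $\omega$ for every $k \geq 0$; and the specialness of the WNU $\omega$, i.e., the identity $x \mathbin{\circ} (x \mathbin{\circ} y) = x \mathbin{\circ} y$.

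The base cases $m \leq 1$ are immediate: for $m = 0$ we have $a = a' = o$ and use idempotency, and for $m = 1$ we have $a' \in E_2(o)$ with $a \in \{o, a'\}$, covered by Lemma~\ref{lemma:WNUabsorbing_E2o} when $a = o$ and by idempotency when $a = a'$. For the inductive step with $m \geq 2$ and $a \neq a'$, set $v := a \mathbin{\circ} a'$ and suppose for contradiction that $v \neq a$. Specialness then gives $a \mathbin{\circ} v = v$. Since $a \preceq a'$ with $a \neq a'$ forces $\mathrm{dist}_E(o, a) \leq 2m-2$, we have $a \in E_{2m-2}(\{o\})$, while $a' \in E_{2m}(\{o\})$. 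Applying Observation~\ref{observation_basic_properties_of_E_k} to $\{o\} \abs E_2(o)$ with $k = 2m-2$ yields $E_{2m-2}(\{o\}) \abs E_{2m}(\{o\})$ via $\omega$, and hence $v = \omega(a', a, \ldots, a) \in E_{2m-2}(\{o\})$, so $\mathrm{dist}_E(o, v) \leq 2m - 2 < 2m$.

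To close the contradiction I need $a \preceq v$: then the inductive hypothesis applied to the pair $(a, v)$ gives $a \mathbin{\circ} v = a$, which combined with $a \mathbin{\circ} v = v$ forces $v = a$, contradicting the assumption. Establishing $a \preceq v$ is the key structural step and is where I expect the main difficulty to lie. The intuition is that $v$ must lie on the unique oriented path $\mathbb{Q}_{o,a'}$ through $a$ because $\omega$, as a polymorphism of the oriented tree $\mathbb{H}$, preserves the rigidity of path structure; formally, one verifies this by a careful argument tracking how $\omega$ acts on tuples whose coordinates traverse $\mathbb{Q}_{o,a'}$, or by showing that the appropriate initial segment of this path is a subuniverse via pp-definability with the constants $a$ and $a'$. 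Specializing to $a = o$ then yields the ``in particular'' conclusion $\{o\} \abs A$ via $\omega$. The symmetric statement for $b, b' \in B$ follows by a dual argument: from $\{o\} \abs A$ via $\omega$ and Observation~\ref{observation_basic_properties_of_E_k} one obtains $E_+(o) \abs B$ via $\omega$, which provides a $B$-analog of the setup to which the same inductive argument applies.
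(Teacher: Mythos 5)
Your overall strategy matches the paper: use the absorption chain from Lemma~\ref{lemma:WNUabsorbing_E2o} to get $v := a\mathbin{\circ}a' \neq a'$, then show $a\preceq v$, and finally derive a contradiction from specialness. However, there is a genuine gap at precisely the step you flag: you never actually establish $a\preceq v$, and neither of your two suggested avenues works as stated. The first (``tracking how $\omega$ acts on tuples traversing $\mathbb Q_{o,a'}$'') is too vague to constitute an argument --- a polymorphism of $\mathbb H$ need not respect the vertex set of a particular oriented path in any useful coordinatewise way. The second (``an initial segment of the path as a pp-definable subuniverse with constants $a$ and $a'$'') also misses: any natural ``initial segment'' containing both $a$ and $a'$ will also contain $o$ and everything below $a$, so membership of $v$ in it cannot force $a\preceq v$.

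What the paper does at this point is quite different and is the essential extra ingredient you are missing. Set $l=\mathrm{dist}_E(a,a')$, which is even since $a,a'\in A$, and let $u$ be the unique vertex of $A\cup B$ on the path $\mathbb Q_{o,a'}$ with $\mathrm{dist}_E(a,u)=\mathrm{dist}_E(u,a')=l/2$, i.e.\ the midpoint of the path from $a$ to $a'$. Then $E_{l/2}(u)$ is a subuniverse of $\alg\mathbb H$ (using that $\{u\}$ is a singleton subuniverse by idempotency, together with Observation~\ref{observation_basic_properties_of_E_k}), it contains both $a$ and $a'$, and a tree argument shows $a$ is the $\preceq$-minimum element of $E_{l/2}(u)$: for any $w\in E_{l/2}(u)$ the last common vertex $p$ of $\mathbb Q_{o,u}$ and $\mathbb Q_{o,w}$ satisfies $\mathrm{dist}_E(u,p)\le\mathrm{dist}_E(u,w)\le l/2$, hence $a\preceq p\preceq w$. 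Therefore $a\mathbin{\circ}a'\in E_{l/2}(u)$ yields $a\preceq a\mathbin{\circ}a'$, which is exactly the statement you need. Note that this is a different kind of subuniverse than the ones you proposed --- a ball centered at the \emph{midpoint} of the path, not an initial segment or a neighbourhood of $o$ --- and this geometric choice is what makes the $\preceq$-minimality argument go through.

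As a minor remark, your organization via induction on $m$ and using the inductive hypothesis to get $a\mathbin{\circ}v=a$ is valid but slightly heavier than the paper's version, which simply reapplies the absorption step to the pair $(a,v)$ to get $a\mathbin{\circ}v\neq v$ and contradicts specialness directly. Also, the $B$-level case does not need to be bootstrapped from $\{o\}\abs A$ as you suggest; the midpoint argument applies verbatim to $b\preceq b'$ in $B$ (the ball $E_{l/2}(u)\leq B$ for $u$ the midpoint works the same way), with the absorption chain $E_{k}(o)\abs E_{k+2}(o)$ for odd $k$ given by Observation~\ref{observation_basic_properties_of_E_k}.
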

\begin{proof}
If $a=a'$, then $a\mathbin{\circ}a'=a$ follows trivially from idempotency of $\omega$. Else, let $k\geq 0$ be such that $\dist_\mathbb T(o,a')=k+2$. We have that $a'\in E_{k+2}(o)\setminus E_k(o)$. From $a\prec a'$ we know that the distance betwen $a$ and $o$ is at most $k$ and thus $a\in E_k(o)$. From Lemma \ref{lemma:WNUabsorbing_E2o} and the last item of Observation` \ref{observation_basic_properties_of_E_k} it follows that $E_k(o)\unlhd E_{k+2}(o)$ via $\omega$ and so $a\mathbin{\circ}a'\in E_k(o)$. In particular, $a\mathbin{\circ}a'\neq a'$.

Note that $l=\dist_\mathbb T(a,a')$ is even and that there exists a unique vertex $u\in\Path_\mathbb H(o,a')\cap(A\cup B)$ such that $\dist_\mathbb T(a,u)=\dist_\mathbb T(u,a')=l/2$. Since $a,a'\in E_{l/2}(u)$, which is a subuniverse, we have that $a\mathbin{\circ}a'\in E_{l/2}(u)$ (see Figure~\ref{figure:a_circ_aprime}).

We will show that $a$ is the $\preceq$-least element of $E_{l/2}(u)$. Choose any $v\in E_{l/2}(u)$, $v\neq a$. Consider the concatenation of the paths $\Path_\mathbb H(o,v)$, $\Path_\mathbb H(v,u)$, and $\Path_\mathbb H(u,o)$. Because $a\in\Path_\mathbb H(u,o)$ and there are no cycles in $\mathbb H$, we must pass through $a$ once more. But since  $\dist_\mathbb T(a,u)=l/2\geq\dist_\mathbb T(v,u)$,
we know that $a\notin\Path_\mathbb H(v,u)$ and so it must be the case that $a\in\Path_\mathbb H(o,v)$. This means that $a\preceq v$. In particular, $a\preceq a\mathbin{\circ}a'$.

\begin{figure}[ht!]\centering
\begin{tikzpicture}
  [thick,yscale=1.2,
  blacksquare/.style={rectangle,draw,outer sep=2pt,inner sep=0pt,minimum size=1.8mm,fill},
  whitesquare/.style={rectangle,draw,outer sep=2pt,inner sep=0pt,minimum size=1.8mm,fill=white},
  >=latex]
  \draw [fill=gray!15!white] (6,0) ellipse (3.2 and 1.7);
  \draw[rounded corners=20pt,ultra thick] (6.5,-2.5) -- (7.5,-2.5) -- (7.5,2.5) -- (6.5, 2.5) ;
  \draw[rounded corners=20pt,ultra thick] (8.5,-2.5) -- (9.5,-2.5) -- (9.5,2.5) -- (8.5, 2.5) ;
  \node (label_El2) at (6, -1.4) {$E_{l/2}(u)$};
  \node (label_Eko) at (6.7, -2.1) {$E_k(o)$};
  \node (label_Eko2) at (8.7, -2.1) {$E_{k+2}(o)$};
\node [blacksquare,label=west:{$o$}] (o) at (0,0) {};
\node [label=east:{$\cdots$}](o1) at (1,0) {};
\node [whitesquare] (o+) at (0.5,1.5) {};
\node (o2) at (2,0) {};
\node [blacksquare] (a) at (3,0) {};
\node (labela) at (3.3,-0.3) {$a$};
\node [whitesquare] (a+) at (3.3,1.5) {};
\node [whitesquare] (a1) at (4,0) {};
\node [blacksquare] (a1+) at (4.25,1) {};
\node [whitesquare] (a1++) at (5,2) {};
\node [blacksquare] (a2) at (5,0) {};
\node [whitesquare] (a3+) at (5.8,0.9) {};
\node [blacksquare] (a4+) at (6.8,1.4) {};
\node [whitesquare] (a5+) at (8,1.8) {};
\node [whitesquare] (u) at (6,0) {};
\node (labelu) at (6,-0.35) {$u$};
\node [blacksquare] (a4) at (7,0) {};
\node [whitesquare] (a5) at (8,0) {};
\node [blacksquare] (a') at (9,0) {};
\node (labela') at (8.75,-0.3) {$a'$};
\node [whitesquare] (a'+) at (10,1) {};
\node at (3.2,0.2) {$\mathbf{?}$};
\node at (4.05,1.1) {$\mathbf{?}$};
\node at (4.8,0.2) {$\mathbf{?}$};
\node at (6.8,0.2) {$\mathbf{?}$};
\node at (6.6,1.5) {$\mathbf{?}$};
\draw[->,decorate,decoration={snake,segment length=1.5mm,amplitude=0.3mm,post length = 2mm}] (o) -- (o+);
\draw[->,decorate,decoration={snake,segment length=1.5mm,amplitude=0.3mm,post length = 2mm}] (o) -- (o1);
\draw[->,decorate,decoration={snake,segment length=1.5mm,amplitude=0.3mm,post length = 2mm}] (a) -- (o2);
\draw[->,decorate,decoration={snake,segment length=1.5mm,amplitude=0.3mm,post length = 2mm}] (a) -- (a+);
\draw[->,decorate,decoration={snake,segment length=1.5mm,amplitude=0.3mm,post length = 2mm}] (a) -- (a1);
\draw[->,decorate,decoration={snake,segment length=1.5mm,amplitude=0.3mm,post length = 2mm}] (a2) -- (a1);
\draw[->,decorate,decoration={snake,segment length=1.5mm,amplitude=0.3mm,post length = 2mm}] (a2) -- (u);
\draw[->,decorate,decoration={snake,segment length=1.5mm,amplitude=0.3mm,post length = 2mm}] (a4) -- (u);
\draw[->,decorate,decoration={snake,segment length=1.5mm,amplitude=0.3mm,post length = 2mm}] (a4) -- (a5);
\draw[->,decorate,decoration={snake,segment length=1.5mm,amplitude=0.3mm,post length = 2mm}] (a') -- (a5);
\draw[->,decorate,decoration={snake,segment length=1.5mm,amplitude=0.3mm,post length = 2mm}] (a') -- (a'+);
\draw[->,decorate,decoration={snake,segment length=1.5mm,amplitude=0.3mm,post length = 2mm}] (a1+) -- (a1);
\draw[->,decorate,decoration={snake,segment length=1.5mm,amplitude=0.3mm,post length = 2mm}] (a1+) -- (a1++);
\draw[->,decorate,decoration={snake,segment length=1.5mm,amplitude=0.3mm,post length = 2mm}] (a2) -- (a3+);
\draw[->,decorate,decoration={snake,segment length=1.5mm,amplitude=0.3mm,post length = 2mm}] (a4+) -- (a3+);
\draw[->,decorate,decoration={snake,segment length=1.5mm,amplitude=0.3mm,post length = 2mm}] (a4+) -- (a5+);
\end{tikzpicture}
\caption{An example of (a subgraph of) $\mathbb T$ with $l=\dist_\mathbb T(a,a')=6$. The first step shows that $a\mathbin{\circ}a'$ must be one of the vertices marked `?'. Then we repeat the argument with $a\mathbin{\circ}a'$ in the role of $a'$.}\label{figure:a_circ_aprime}
\end{figure}
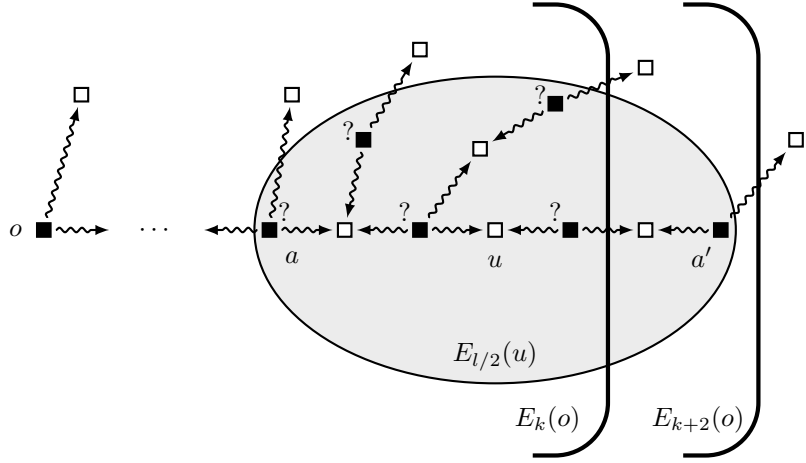

Suppose for contradiction that $a\neq a\mathbin{\circ}a'$. Then repeating the arguments from the first paragraph with $a\mathbin{\circ}a'$ in the role of $a'$ yields $a\mathbin{\circ}(a\mathbin{\circ}a')\neq a\mathbin{\circ}a'$ which contradicts the fact that $\omega$ is a special WNU operation.

Hence we have proved that $a\mathbin{\circ}a'=a$. The proof for $b\preceq b'$ is essentially the same. The fact that $\{o\}\abs A$ via $\omega$ now follows immediately from the definition of absorption and the fact that $o$ is the $\preceq$-least element of $A$.
\end{proof}

The existence of a singleton absorbing subuniverse $\{o\}$ already significantly restricts living space for possible absorption-free subuniverses in $A$ and $B$, as we can see in the following corollary. (Of course, the dual version for $D\leq B$ is also true.)

\begin{corollary} \label{corollary:AF_has_single_distance}
If $C\leq A$ is absorption-free, then there exists $k>0$ such that $\dist_\mathbb T(o,c)=k$ for all $c\in C$.
\end{corollary}
\begin{proof}
Let $k$ be the minimum from the set $\{\dist_\mathbb T(o,c)\mid c\in C\}$. Since $\{o\}\abs A$, by Observation \ref{observation_basic_properties_of_E_k} we have $E_k(o)\abs E_k(A)=A$, and thus also $C\cap E_k(o)\abs C\cap A=C$. Since $C$ is absorption-free, it follows that $C\cap E_k(o)=C$.

We have proved that $k\leq\dist_\mathbb T(o,c)\leq k$ for all $c\in C$. Note that $k>0$, since otherwise $C=\{o\}$ which is not absorption-free by definition.
\end{proof}

\subsection{$E$-neighbourhoods of singletons are $\mathrm{SD}(\wedge)$}\label{subsection:Eneighbourhoods}

In this subsection we prove that $E$-neighbourhoods of elements from $A\cup B$ are $\mathrm{SD}(\wedge)$. Our strategy is to show that whenever they have an absorption-free subuniverse, it must have a weakly pointing operation (and then apply Corollary \ref{corollary:every_AF_has_pointing}). For the rest of this subsection, we fix $b\in B$ and an absorption-free subuniverse $C\leq E_-(b)$. (The proof for $D\leq E_+(a)$ is analogous.)

We will need the following observation:

\begin{observation} \label{observation:b_prec_c}
  For all $c\in C$, $b\prec c$.
\end{observation}
\begin{proof}
Let $\dist_\mathbb T(b,o)=k$. Since $\mathbb T$ is a tree, exactly one vertex $u\in E_-(b)$ has $\dist_\mathbb T(u,o)=k-1$; all other vertices from $E_-(b)$ have $\mathbb T$-distance $k+1$ from $o$ (see Figure \ref{figure:AFsetC}). The rest follows from Corollary \ref{corollary:AF_has_single_distance} and the fact that $|C|>1$.
\end{proof}

In the first step, we prove that elements from $B$ which are $\preceq$-above $C$ are ``absorbed by $b$'' via a certain binary operation $\mathbin{\star}$. (Note that such elements do not need to form a subuniverse, and so it is not absorption in the sense we defined.) Later we will use this operation to construct various binary polymorphisms and then build up a weakly pointing operation for $C$ from them.

Let us denote by $\mathbin{\star}$ the binary idempotent polymorphism of $\mathbb H$ given by
$$
x\star y=(\dots((x\underbrace{\mathbin{\circ}\,y)\mathbin{\circ}y)\mathbin{\circ}\dots\mathbin{\circ}}_{|H|\text{-times}}y),
$$
where the operation $\mathbin{\circ}$ appears $|H|$-times (just for good measure).

\begin{lemma} \label{lemma:star_absorbs}
If $d\in B$ is such that $c\prec d$ for some $c\in C$, then $b\mathbin{\star} d=d\mathbin{\star} b=b$.
\end{lemma}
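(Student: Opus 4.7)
The plan is to combine the hypothesis $c\prec d$ with the fact---already shown earlier in this subsection using $|C|>1$ and Lemma~\ref{lemma:AF_has_single_distance}---that $b\prec c$ for every $c\in C$; transitivity of $\preceq$ then yields $b\prec d$. The two equalities $b\mathbin{\star}d=b$ and $d\mathbin{\star}b=b$ will be handled separately.

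The identity $b\mathbin{\star}d=b$ is immediate: since $b\preceq d$ and both lie in $B$, Lemma~\ref{lemma:singleton_absorbing} (the statement for $B$) gives $b\mathbin{\circ}d=b$, so $b$ is a fixed point of the unary map $e\mapsto e\mathbin{\circ}d$, and the $|H|$-fold iteration defining $\mathbin{\star}$ leaves $b$ in place.

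The identity $d\mathbin{\star}b=b$ is the delicate direction. Write the unique $\mathbb T$-path from $o$ to $d$ as
\[
o\dashrightarrow\dotsb\dashrightarrow b\dashleftarrow c=c^{(1)}\dashrightarrow b^{(1)}\dashleftarrow c^{(2)}\dashrightarrow\dotsb\dashleftarrow c^{(j)}\dashrightarrow b^{(j)}=d,
\]
so that we have two $\preceq$-chains along the path: $c^{(1)}\preceq\dotsb\preceq c^{(j)}$ in $A$ and $b=b^{(0)}\preceq b^{(1)}\preceq\dotsb\preceq b^{(j)}=d$ in $B$. Lemma~\ref{lemma:singleton_absorbing} supplies, along these chains, the identities $c^{(l)}\mathbin{\circ}c^{(l+1)}=c^{(l)}$ and $b^{(l-1)}\mathbin{\circ}b^{(l)}=b^{(l-1)}$. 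Setting $e_0=d$ and $e_{i+1}=e_i\mathbin{\circ}b$, the plan is to combine these identities with the polymorphism property of $\omega$ applied to carefully chosen $n$-tuples of $E$-edges drawn from the path, and the special WNU identity $x\mathbin{\circ}(x\mathbin{\circ}y)=x\mathbin{\circ}y$, to force each iterate $e_i$ (for $i\geq 1$) to lie in progressively smaller subuniverses containing $b$, collapsing the orbit onto $b$ within $|H|$ iterations.

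The main obstacle I anticipate is ensuring that this descent is strict: a priori the map $e\mapsto e\mathbin{\circ}b$ may admit fixed points other than $b$, and one must rule them out to guarantee the orbit actually reaches $b$ rather than stalling at some intermediate element of $E_+(c)$ or of $\{b^{(0)},\dotsc,b^{(j-1)}\}$. This will require careful bookkeeping, combining the chain identities at several levels of the path with repeated applications of edge preservation, to exhibit a well-founded measure that is strictly decreased by each iteration of $e\mapsto e\mathbin{\circ}b$ until $b$ itself is reached, after which the iteration is trivially fixed.
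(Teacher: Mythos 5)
Your setup coincides with the paper's: you form the same iteration $e_0=d$, $e_{i+1}=e_i\mathbin{\circ}b$, dispose of $b\mathbin{\star}d=b$ the same way via Lemma~\ref{lemma:singleton_absorbing}, and correctly identify the obstruction in the other direction (the iteration $e\mapsto e\mathbin{\circ}b$ could stall at a fixed point distinct from $b$). But you stop at naming the obstruction; you do not supply the idea that resolves it, and the resolution you gesture at---a \emph{strictly} decreasing well-founded measure---is not what the paper does and is not available here. The paper's measure is $k_i=\mathrm{dist}_E(e_i,b)$, and it shows only that $b\preceq e_i$ and $k_{i+1}\leq k_i$, i.e., weak descent. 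The pigeonhole then forces some $i<|H|$ with $k_i=k_{i+1}=k$, and the whole point is to handle the case $k\geq 2$: there one shows for every $c'\in C$ that $c\mathbin{\circ}c'\in E_{k-1}(e_{i+1})\cap C=\{c\}$, so $\{c\}\abs C$ via $\omega$, contradicting the \emph{absorption-freeness of $C$}. That hypothesis is the crucial ingredient of the entire subsection, and your proposal never invokes it; without it the stalling genuinely can occur and the statement is false, so any correct proof must use it.

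Two smaller points: the special WNU identity $x\mathbin{\circ}(x\mathbin{\circ}y)=x\mathbin{\circ}y$ plays no role in this lemma's proof (it was already used to establish Lemma~\ref{lemma:singleton_absorbing}); and the ``carefully chosen $n$-tuples'' for $\omega$ are unnecessary---the argument runs entirely through the binary polymer $\circ$ and the subuniverses $E_k(\cdot)$.
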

\begin{proof}
Since $b\prec c$ (by Observation \ref{observation:b_prec_c}) and $c\prec d$, we also have that $b\prec d$. From Lemma \ref{lemma:singleton_absorbing} (applied to $b,d\in B$ in the role of $a,a'\in A$)  we get that $b\mathbin{\circ}d=b$, and thus also $b \mathbin{\star} d=b$. To prove the other equality, consider the sequence $\langle d_0 d_1 \dots d_{|H|}\rangle$ of elements of $B$ defined inductively by setting $d_0=d$ and $d_i=d_{i-1}\mathbin{\circ}b$ for $i\in[|H|]$. Observe that $d_{|H|}=d\mathbin{\star}b$ which we want to equate to $b$.

Let $k_i$ denote the distance $\dist_\mathbb T(d_i,b)$. We will prove that for every $0\leq i\leq |H|$, $b\preceq d_i$ and $k_i\leq k_{i-1}$ (we set $k_{-1}=k_0$). The proof uses induction on $i$; the case $i=0$ is trivial. Assume that the claim holds for some $i<|H|$. Following the same logic as in the proof of Lemma \ref{lemma:singleton_absorbing}, there exists $u_i\in A\cup B$
such that $\dist_\mathbb T(u_i,b)=\dist_\mathbb T(u_i,d_i)={k_i}/2$. Since $b\preceq d_i$, it follows that $b$ is the $\preceq$-minimal (and $d_i$ a $\preceq$-maximal) element of $E_{{k_i}/2}(u_i)$. Consequently, $d_{i+1}=d_i\mathbin{\circ} b\in E_{{k_i}/2}(u_i)$ implies that $b\preceq d_{i+1}$ and $k_{i+1}\leq k_i$
(see Figure \ref{figure:AFsetC}).

\begin{figure}[ht!]
\centering
\begin{tikzpicture}
  [thick, on grid, node distance=0.8cm and 1.2cm,
  blacksquare/.style={rectangle,draw,outer sep=2pt,inner sep=0pt,minimum size=1.8mm,fill},
  whitesquare/.style={rectangle,draw,outer sep=2pt,inner sep=0pt,minimum size=1.8mm},
  >=latex]
\node [blacksquare,label=east:{$o$},label=north:{$\vdots$}] (00) at (0,0) {};
\node [above=of 00] (01) {};
\node [whitesquare,above=of 01,label=south west:{$b$}] (02) {};
\node [above=of 02] (03) {};
\node [right=of 03] (13) {};
\node [right=of 13] (23) {};
\node [left=of 03] (-13) {};
\node [left=of -13] (-23) {};
\node [left=of -23] (-33) {};
\node [left=of -33] (-43) {};
\node [above=of -43] (-44) {};
\node [above=of -44] (-45) {};
\draw[rounded corners=22pt,fill=gray!20!white] (-45) rectangle (23) {};
\node [below=of -45,label=west:{$C$}] {};
\node [above=of 03] (04) {$\cdots$};
\node [blacksquare,right=of 04,label=south east:{$c'$}] (14) {};
\node [right=of 14] (24) {};
\node [above=of 14] (15) {};
\node [above=of 15] (16) {};
\node [whitesquare,left=of 16,label=north:{$\vdots$}] (06) {};
\node [whitesquare,right=of 16,label=north:{$\vdots$}] (26) {};
\node [blacksquare,right=of 23,label=north west:{$\ddots$}] (33) {};
\node [blacksquare,left=of 04] (-14) {};
\node [left=of -14] (-24) {};
\node [blacksquare,left=of -24,label=south west:{$c$}] (-34) {};
\node [above=of -34] (-35) {};
\node [above=of -35] (-36) {};
\node [left=of -35] (-45) {};
\node [whitesquare,left=of -36,label=north west:{$u_i$}] (-46) {};
\node [whitesquare,right=of -36,label=north east:{$d_{i+1}$}] (-26) {};
\node [above=of -46] (-47) {};
\node [blacksquare,above=of -47] (-48) {};
\node [above=of -48] (-49) {};
\node [whitesquare,above=of -49,label=north west:{$d_i$}] (-410) {};
\draw[->,decorate,decoration={snake,segment length=1.5mm,amplitude=0.3mm,post length = 2mm}] (01) -- (02);
\draw[->,decorate,decoration={snake,segment length=1.5mm,amplitude=0.3mm,post length = 2mm}] (14) -- (02);
\draw[->,decorate,decoration={snake,segment length=1.5mm,amplitude=0.3mm,post length = 2mm}] (33) -- (02);
\draw[->,decorate,decoration={snake,segment length=1.5mm,amplitude=0.3mm,post length = 2mm}] (-14) -- (02);
\draw[->,decorate,decoration={snake,segment length=1.5mm,amplitude=0.3mm,post length = 2mm}] (-34) -- (02);
\draw[->,decorate,decoration={snake,segment length=1.5mm,amplitude=0.3mm,post length = 2mm}] (14) -- (06);
\draw[->,decorate,decoration={snake,segment length=1.5mm,amplitude=0.3mm,post length = 2mm}] (14) -- (26);
\draw[->,decorate,decoration={snake,segment length=1.5mm,amplitude=0.3mm,post length = 2mm}] (-34) -- (-26);
\draw[->,decorate,decoration={snake,segment length=1.5mm,amplitude=0.3mm,post length = 2mm}] (-34) -- (-46);
\draw[->,decorate,decoration={snake,segment length=1.5mm,amplitude=0.3mm,post length = 2mm}] (-48) -- (-46);
\draw[->,decorate,decoration={snake,segment length=1.5mm,amplitude=0.3mm,post length = 2mm}] (-48) -- (-410);
\end{tikzpicture}
\caption{An absorption-free subuniverse $C\leq E_{-}(b)$.}\label{figure:AFsetC}
\end{figure}
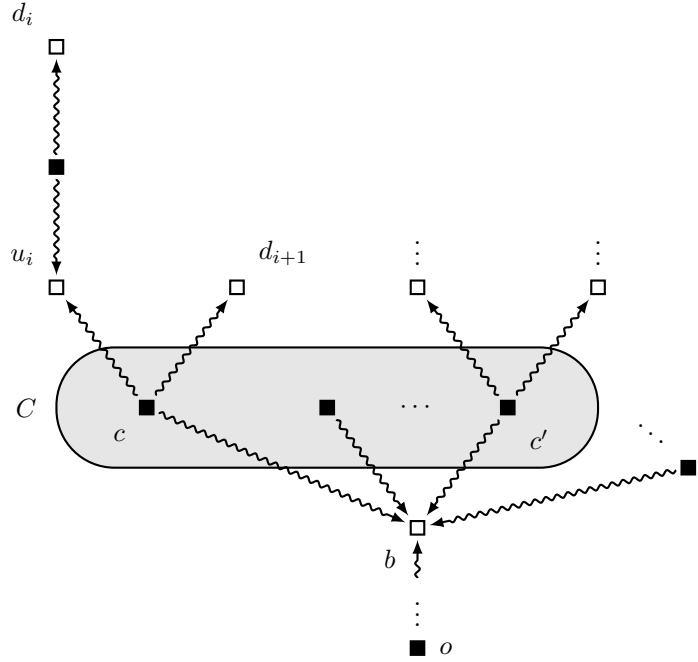

Note that $k_0<|H|$, and so the nonincreasing sequence $\langle k_0k_1\dots k_{|H|}\rangle$ of distances must stabilize. That is, there exists $i<|H|$ such that $k_i=k_{i+1}$. We denote this distance by $k$ and show in the following claim that it can only stabilize at $k=0$, that is, when $d_i=d_{i+1}=b$.

\begin{claim*}
The distance $k$ must be equal to zero.
\end{claim*}
\begin{subproof}[Proof of Claim]
Suppose for contradiction that $k\neq 0$ (and so $k\geq 2$, since $k$ is even). Pick any $c'\in C$.

First, recall that $C\leq E_-(b)$ which is definitely contained in $E_{k-1}(b)$. Therefore we have that $c'\in E_{k-1}(b)$. Second, note that $\dist_\mathbb T(d_i,c)$ is one less than $\dist_\mathbb T(d_i,b)=k$ (see Figure \ref{figure:AFsetC}). Therefore $c\in E_{k-1}(d_i)$. We combine these two facts with the compatibility of the relation $E$: Since $c\in E_{k-1}(d_i)$, $c'\in E_{k-1}(b)$, $d_i\mathbin{\circ}b=d_{i+1}$, and $\mathbin{\circ}$ preserves $E$, it follows that $c\mathbin{\circ}c'\in E_{k-1}(d_{i+1})$.

But we also have $c\mathbin{\circ}c'\in C$ (it is a subuniverse) and $E_{k-1}(d_{i+1})\cap C=\{c\}$. Thus we have proved that $c\mathbin{\circ}c'=c$ for all $c'\in C$, which means that $\{c\}\abs C$ via $\omega$, a contradiction with $C$ being absorption-free.
\end{subproof}

 We have proved that $k=0$, which means $d_i=d_{i+1}=b$ and thus by idempotency of $\mathbin{\circ}$ also $d_{|B|}=d\mathbin{\star}b=b$.
\end{proof}

Let us denote by $\mathcal F$ the smallest set of binary operations on $H$ satisfying
\begin{itemize}
\item $x\mathbin{\star}y\in\mathcal F$, $y\mathbin{\star}x\in\mathcal F$, and
\item if $\varphi(x,y)\in\mathcal F$, then $\{x\mathbin{\star}\varphi(x,y),y\mathbin{\star}\varphi(x,y),\varphi(x,y)\mathbin{\star}x,\varphi(x,y)\mathbin{\star}y\}\subseteq\mathcal F$,
\item if $\varphi(x,y),\varphi'(x,y)\in\mathcal F$, then $(\varphi(x,y)\star\varphi'(x,y))\in\mathcal F$.
\end{itemize}
From Lemma \ref{lemma:star_absorbs} and the construction of $\mathcal F$ we immediately obtain the following:

\begin{corollary} \label{corollary:F_absorbs}
If $d\in B$ is such that $c\prec d$ for some $c\in C$, then $\varphi(b,d)=\varphi(d,b)=b$ for every $\varphi\in\mathcal F$.
\end{corollary}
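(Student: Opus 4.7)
The plan is to proceed by structural induction on the construction of $\varphi \in \mathcal F$, using Lemma \ref{lemma:star_absorbs} as the base case and the idempotency of $\mathbin{\star}$ (which follows from idempotency of $\omega$) repeatedly in the inductive step.

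For the base case, $\varphi$ is either $x\mathbin{\star}y$ or $y\mathbin{\star}x$. In either case, evaluating at $(b,d)$ or $(d,b)$ yields either $b\mathbin{\star}d$ or $d\mathbin{\star}b$, both of which equal $b$ by Lemma \ref{lemma:star_absorbs}.

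For the inductive step, suppose $\varphi(x,y),\varphi'(x,y)\in\mathcal F$ already satisfy $\varphi(b,d)=\varphi(d,b)=b$ and likewise for $\varphi'$. I need to check that the four unary compositions with $\mathbin{\star}$ and the binary combination $\varphi\mathbin{\star}\varphi'$ also give $b$ at the inputs $(b,d)$ and $(d,b)$. For instance, for $\psi(x,y)=x\mathbin{\star}\varphi(x,y)$: at $(b,d)$ one gets $b\mathbin{\star}\varphi(b,d)=b\mathbin{\star}b=b$ by idempotency, and at $(d,b)$ one gets $d\mathbin{\star}\varphi(d,b)=d\mathbin{\star}b=b$ by Lemma \ref{lemma:star_absorbs}. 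The other three unary compositions $y\mathbin{\star}\varphi$, $\varphi\mathbin{\star}x$, $\varphi\mathbin{\star}y$ are handled symmetrically --- in each case one of the two arguments of $\mathbin{\star}$ becomes $b$ by the induction hypothesis, and then either the other argument is also $b$ (giving $b$ by idempotency) or the other argument is $d$ (giving $b$ by Lemma \ref{lemma:star_absorbs}). Finally, $(\varphi\mathbin{\star}\varphi')(b,d)=\varphi(b,d)\mathbin{\star}\varphi'(b,d)=b\mathbin{\star}b=b$, and similarly for $(d,b)$.

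There is no real obstacle here; the only minor point to keep in mind is that one needs idempotency of $\mathbin{\star}$ (that is, $b\mathbin{\star}b=b$), which is immediate since $\omega$ is idempotent and $\mathbin{\star}$ is obtained by iterated composition of the binary polymer $\mathbin{\circ}$ with itself. The corollary then follows by observing that every $\varphi\in\mathcal F$ is reached after finitely many applications of the three clauses defining $\mathcal F$.
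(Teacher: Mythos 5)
Your proof is correct and is essentially what the paper intends: the paper omits the routine structural induction (it just says the corollary follows immediately from Lemma \ref{lemma:star_absorbs} and the construction of $\mathcal F$), and you have simply spelled out that induction, correctly checking all the closure rules and correctly noting where idempotency of $\star$ is needed.
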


For every $c,c'\in C$ let $S_{c,c'}$ be the set $\{\varphi(c,c')\mid \varphi(x,y)\in\mathcal F\}\subseteq C$. We will use the following easy facts:
\begin{itemize}
\item $S_{c,c}=\{c\}$,
\item $S_{c,c'}=S_{c',c}$,
\item both $S_{c,c'}$ and $S_{c,c'}\cup\{c,c'\}$ are closed under the operation $\mathbin{\star}$, that is, they are subalgebras of the groupoid $(H;\star)$.
\item in particular, if $x,y\in S_{c,c'}$, then $S_{x,y}\subseteq S_{c,c'}$.
\end{itemize}
\begin{remark*}
Alternatively, using terminology from universal algebra, we could have defined $\mathcal F$ to be the set of all binary terms in the binary operation symbol $\star$ which contain both the variables $x$ and $y$. Then $S_{c,c'}$ would be the image of $\mathcal F$ under the homomorphism from the absolutely free two-generated algebra to $(C;\{\star\})$ given by $x\mapsto c$ and $y\mapsto c'$.
\end{remark*}

Note that $\mathcal F\subseteq\IdPol_2(\mathbb H)$. In the next lemma, we prove that, in fact, $\mathbb H$ has many more binary idempotent polymorphisms.

\begin{lemma} \label{lemma:binary_polymorphisms_on_C}
Let $\gamma:C^2\to C$ be any binary idempotent operation such that $\gamma(c,c')\in S_{c,c'}$ for all $c,c'\in C$. Then there exists $\tau\in\IdPol_2(\mathbb H)$ extending $\gamma$ (i.e., $\tau |_C=\gamma$).
\end{lemma}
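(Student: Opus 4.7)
The plan is to construct $\tau$ by pasting together the binary polymorphisms $\varphi \in \mathcal F$ that witness the chosen values of $\gamma$. The first step is: for each pair $(c,c') \in C^2$, pick a term $\varphi_{c,c'} \in \mathcal F$ with $\varphi_{c,c'}(c,c') = \gamma(c,c')$; for $c = c'$ we may take $\varphi_{c,c}$ to be the first projection, since idempotency forces $\gamma(c,c) = c$ (as $S_{c,c} = \{c\}$). Each $\varphi_{c,c'}$ already lies in $\IdPol_2(\mathbb H)$ because $\mathcal F$ is built from the polymorphism $\star$.

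The key geometric observation I would exploit is that, because $\mathbb H$ is an oriented tree with all elements of $C$ being initial vertices (level $0$) of the minimal paths $\mathbb P_{(c,b)}$, the only forward paths in $\mathbb H^2$ starting from $(c,c') \in C^2$ that end at the common sink $(b,b)$ stay inside the product $\mathbb P_{(c,b)} \times \mathbb P_{(c',b)}$. I would define $\tau(x,y) = \varphi_{c,c'}(x,y)$ on this product, and define $\tau$ as the first projection on those connected components of $\mathbb H^2$ that are disjoint from $\bigcup_{(c,c') \in C^2} \mathbb P_{(c,b)} \times \mathbb P_{(c',b)}$; this imposes no constraint from $\gamma$ on those components.

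Verification that $\tau$ is a polymorphism then splits into (a) edge preservation inside each product $\mathbb P_{(c,b)} \times \mathbb P_{(c',b)}$, which is immediate since $\varphi_{c,c'}$ is a polymorphism, (b) edge preservation in the default region, which is immediate for the first projection, and (c) agreement on the overlaps between different products. The overlaps occur precisely at the pairs involving the common sink $b$: the products for $(c,c')$ and $(c,c'')$ meet on $\mathbb P_{(c,b)} \times \{b\}$, similarly in the other coordinate, and all products meet at $\{(b,b)\}$. At $(b,b)$, every $\varphi \in \mathcal F$ yields $b$ by idempotency, so agreement there is automatic.

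The main obstacle is the remaining overlap consistency, namely arranging that $\varphi_{c,c'}(p,b)$ depends only on $p$ and $c$ (and analogously in the other coordinate). I would handle this by a normalization step. Using the closure properties of $\mathcal F$ under $\star$-composition with either variable, each $\varphi_{c,c'}$ can be replaced by a ``sandwich'' of the form $\psi_c(x) \star \varphi_{c,c'}(x,y) \star \psi'_{c'}(y)$ (with $\star$ applied iteratively) so that, after normalization, its restriction to $\mathbb P_{(c,b)} \times \{b\}$ is a fixed homomorphism depending only on $c$. The closure of $S_{c,c'}$ under $\star$ ensures that the value at $(c,c')$ can be arranged to remain equal to $\gamma(c,c')$ throughout this modification. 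Corollary~\ref{corollary:F_absorbs} is the central lever here, because it pins down $\varphi(d,b)$ and $\varphi(b,d)$ as $b$ whenever $d$ lies $\preceq$-above some element of $C$, which collapses much of the behavior of the $\varphi$'s on the $b$-slices and makes the normalization feasible. Once the normalization succeeds, the pasted $\tau$ is well-defined on all of $\mathbb H^2$ and is the desired polymorphism.
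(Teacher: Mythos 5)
Your outline shares the high-level idea with the paper (use the $\varphi$'s from $\mathcal F$ on a carefully chosen region, a ``default'' polymorphism elsewhere, and use Corollary~\ref{corollary:F_absorbs} plus idempotency to make the two regimes agree where they meet), but there are two genuine gaps that make the proposal, as written, not a proof.

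First, your dichotomy does not cover $\mathbb H^2$. You define $\tau$ on $\bigcup_{(c,c')\in C^2}\mathbb P_{(c,b)}\times\mathbb P_{(c',b)}$ and, separately, on connected components of $\mathbb H^2$ that are \emph{disjoint} from that union. But almost all of the diagonal component $\Delta_2$ (which contains all of $A^2$, all of $B^2$, and much more) meets the union without being inside it, as do off-diagonal components such as the one containing $(c,b)$. Every such vertex is left without a value of $\tau$, and every edge leaving a product $\mathbb P_{(c,b)}\times\mathbb P_{(c',b)}$ --- e.g.\ an edge $(c,c')\to(u,v)$ with $u$ the $\mathbb P_{(c,b')}$-neighbour of $c$ for some $b'\ne b$ --- has no target value to compare against. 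So item (b) of your verification plan has nothing to verify, and there is an undefined region where edge-preservation against the special region has to be arranged but is never addressed.

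Second, the normalization step is not executable with the tools you have. Since $\star$ is idempotent, every one-variable $\star$-term is the identity, so $\psi_c(x)\star\varphi_{c,c'}(x,y)\star\psi'_{c'}(y)$ only changes $\varphi_{c,c'}$ by pre/post-composing with $x\star(\cdot)$ and $(\cdot)\star y$; there is no lever to make $\varphi_{c,c'}(p,b)$ (for $p$ an interior vertex of $\mathbb P_{(c,b)}$) independent of $c'$ while also keeping $\varphi_{c,c'}(c,c')=\gamma(c,c')$, and Corollary~\ref{corollary:F_absorbs} gives you no control at $(p,b)$ because $p\prec c$, so $p$ is \emph{not} $\preceq$-above any element of $C$.

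The paper sidesteps both problems by never matching products of minimal paths at all: the ``special'' regime is the set of pairs $(x,y)$ with $\lvl(x)=\lvl(y)$ for which there exist $c,c'\in C$ with $b\prec x\prec c$ or $c\preceq x$ (and similarly for $y$ and $c'$), and the default is $x\star y$ (not first projection) on everything else. The level condition removes precisely the $b$-slices $\mathbb P_{(c,b)}\times\{b\}$ you were worried about, and the $\preceq$-conditions make the special regime downward closed under taking in-neighbours, so any edge that crosses the boundary must land in a pair from $B^2$ containing $b$, where Corollary~\ref{corollary:F_absorbs} and idempotency force both $\varphi_{c,c'}$ and $\star$ to return $b$. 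No normalization of the $\varphi$'s is needed.
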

\begin{proof}
Recall that $b\in B$ is such that $C\leq E_-(b)$, and $b\prec c$ for all $c\in C$. For every $c,c'\in C$ we fix some $\varphi_{c,c'}(x,y)\in\mathcal F$ such that $\varphi_{c,c'}(c,c')=\gamma(c,c')$. For $x,y\in H$ we define $\tau(x,y)$ in the following way:

\begin{enumerate}[(1)]
\item If $\lvl(x)=\lvl(y)$ and there exist $c,c'\in C$ such that the following two conditions hold:
\begin{enumerate}[(a)]
\item $b\prec x\prec c$ or $c\preceq x$, and
\item $b\prec y\prec c'$ or $c'\preceq y$,
\end{enumerate}
then we set $\tau(x,y)=\varphi_{c,c'}(x,y)$.
\item Else, we define $\tau(x,y)=x\mathbin{\star}y$.
\end{enumerate}
It follows immediately from the construction that $\tau$ is idempotent and $\tau |_C=\gamma$. To prove that $\tau$ is a polymorphism of $\mathbb H$, let $x\myrightarrow u$ and  $y\myrightarrow v$ be arbitrary edges in $\mathbb H$. We split the proof into separate arguments depending on the cases of the construction applied to $\tau(x,y)$ and $\tau(u,v)$:

\begin{enumerate}[I.]
  \item Both $\{x,y\}$ and $\{u,v\}$ fall under the same case.

  Then $\tau(x,y)\myrightarrow\tau(u,v)$ in $\mathbb H$ follows immediately from the fact that $\varphi_{c,c'}$ (in case (1)) and $\star$ (in case (2)) are polymorphisms of $\mathbb H$.

  \item $\{x,y\}$ falls under case (1) and $\{u,v\}$ falls under case (2).

  Because $x\myrightarrow u$ in $\mathbb H$ and $b$ lies on the top level of $\mathbb H$, if (1a) is true for $x$ then either it is true for $u$ as well (with the same $c\in C$), or $u=b$. (Indeed, if $b\in\Path_\mathbb H(o,x)$, then $b\in\Path_\mathbb H(o,u)$.) Similarly for $(1b)$, $y$ and $v$. Clearly $\lvl(u)=\lvl(v)$, so the only possible reason for $\{u,v\}$ not falling under case (1) is that $b\in\{u,v\}$.

  It follows that $\tau(u,v)=u\mathbin{\star}v=b=\varphi_{c,c'}(u,v)$, either by Corollary \ref{corollary:F_absorbs} or by idempotency (in case that $u=v=b$). We conclude that $\tau(x,y)\myrightarrow\tau(u,v)$ in $\mathbb H$ in this case as well.

  \item $\{x,y\}$ falls under case (2) and $\{u,v\}$ falls under case (1).

  Because $x\myrightarrow u$ in $\mathbb H$, $b$ lies on the top level of $\mathbb H$, and (1a) is true for $u$, it follows that (1a) must be true for $x$ as well. Similarly for $v$ and $y$. Therefore this situation cannot happen.\qedhere
\end{enumerate}
\end{proof}

As an easy consequence of this lemma, we can prove that $C$ has a binary idempotent commutative operation (i.e., a binary WNU operation).

\begin{corollary} \label{corollary:2wnu}
There exists $\varphi\in\IdPol_2(\mathbb H)$ such that $\varphi |_C$ is commutative.
\end{corollary}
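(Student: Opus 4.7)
My plan is to apply Lemma \ref{lemma:binary_polymorphisms_on_C} to a carefully chosen binary operation $\gamma$ on $C$, and the key point is that the family $\{S_{c,c'}\}$ is \emph{symmetric} in the sense that $S_{c,c'} = S_{c',c}$ for every $c,c' \in C$.

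First I would fix an arbitrary linear order on $C$ and define $\gamma:C^2\to C$ as follows. For $c=c'$ set $\gamma(c,c)=c$ (this is forced, and permitted since $S_{c,c}=\{c\}$). For $c\neq c'$ with $c<c'$ in the fixed order, pick any element $s_{c,c'}\in S_{c,c'}$ arbitrarily and set $\gamma(c,c')=\gamma(c',c)=s_{c,c'}$. By construction $\gamma$ is idempotent and commutative on $C$, and because $S_{c,c'}=S_{c',c}$, the required condition $\gamma(c,c')\in S_{c,c'}$ holds in both orderings of the pair.

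Next I would invoke Lemma \ref{lemma:binary_polymorphisms_on_C} to obtain $\varphi\in\IdPol_2(\mathbb H)$ with $\varphi|_C=\gamma$. Since $\gamma$ is commutative, so is $\varphi|_C$, which is exactly the conclusion. The whole argument is essentially bookkeeping once one observes the symmetry $S_{c,c'}=S_{c',c}$ (which the paper has already listed as one of the easy facts about $S_{c,c'}$), so I do not anticipate any real obstacle. The only subtle point is to check that $\gamma(c,c)=c$ is consistent with $\gamma(c,c)\in S_{c,c}$, and this is immediate from $S_{c,c}=\{c\}$.
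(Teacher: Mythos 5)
Your proposal is correct and is essentially identical to the paper's own proof: the paper likewise defines a commutative $\gamma$ by choosing $\gamma(c,c')=\gamma(c',c)$ arbitrarily from $S_{c,c'}$ (using $S_{c,c'}=S_{c',c}$) and then applies Lemma \ref{lemma:binary_polymorphisms_on_C}. The extra remark about $\gamma(c,c)=c$ being forced by $S_{c,c}=\{c\}$ is a harmless elaboration of the same argument.
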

\begin{proof}
For every $c,c'\in C$ define $\gamma(c,c')=\gamma(c',c)$ to be an arbitrary element from $S_{c,c'}$ thus making $\gamma$ commutative, and then apply Lemma \ref{lemma:binary_polymorphisms_on_C}.
\end{proof}

The above corollary implies that $|C|>2$, since a binary WNU operation on a 2-element set is a semilattice operation which would violate absorption-freeness. Unfortunately, a binary WNU operation is not enough to construct a weakly pointing operation for $C$; we need a slightly more involved argument.

\begin{lemma} \label{lemma:C_has_pointing}
$C$ has a weakly pointing operation.
\end{lemma}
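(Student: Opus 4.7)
The plan is to construct a weakly pointing operation for $C$ of arity at least three. I begin by observing that no binary operation can work: suppose $\psi\in\IdPol_2(\mathbb{H})$ weakly pointed $C$ to $\{a\}$. The two witnessing tuples $\mathbf{a}^1,\mathbf{a}^2$ must lie in $C^2$, and substituting $x=a^1_2$ into the first witnessing condition yields $\psi(a^1_2,a^1_2)=a$; combined with idempotency of $\psi$ this forces $a^1_2=a$, and analogously $a^2_1=a$. Then $\psi(x,a)=\psi(a,x)=a$ for every $x\in C$, so $\{a\}$ absorbs $C$ via $\psi$, contradicting absorption-freeness of $C$ (recall $|C|>2$ by the remark following Corollary~\ref{corollary:2wnu}). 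Hence I must produce $\tau\in\IdPol_n(\mathbb{H})$ with $n\geq 3$.

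My strategy is to build $\tau$ as a ternary composition of binary polymorphisms, say $\tau(x_1,x_2,x_3)=\psi_2(\psi_1(x_1,x_2),x_3)$, using Lemma~\ref{lemma:binary_polymorphisms_on_C} to realize $\psi_1,\psi_2\in\IdPol_2(\mathbb{H})$ with prescribed values on $C$. Such a $\tau$ is automatically an idempotent polymorphism of $\mathbb{H}$. The three weakly pointing conditions at some $a\in C$ unravel as follows: the third condition $\tau(r_1,r_2,x)=a$ for all $x\in C$ requires $\psi_2(s,\cdot)$ to be constantly $a$ on $C$ for $s=\psi_1(r_1,r_2)$, which by idempotency of $\psi_2$ forces $s=a$ and $\psi_2(a,\cdot)\equiv a$ on $C$; the first and second conditions then require the images $\psi_1(C,p_2)$ and $\psi_1(q_1,C)$ to be mapped to $\{a\}$ by the slices $\psi_2(\cdot,p_3)$ and $\psi_2(\cdot,q_3)$ respectively. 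Crucially, all of these are \emph{one-sided} constraints: none of them forces $\{a\}$ to fully absorb $C$, so there is no immediate contradiction with absorption-freeness as there was in the binary case.

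The main obstacle is to show that binary polymorphisms $\psi_1,\psi_2$ with the required values can actually be realized via Lemma~\ref{lemma:binary_polymorphisms_on_C}. This reduces to verifying that the needed values lie in the sets $S_{c,c'}$, and specifically to finding an element $a\in C$ for which the one-sided condition $\psi_2(a,\cdot)\equiv a$ on $C$ can be arranged without thereby also forcing the two-sided absorption version. My plan is to pick $a$ as the result of an iterated $\star$-term applied to elements of $C$ so that $a\in S_{c,a}$ for every $c\in C$; this uses the closure properties of $\mathcal{F}$ under nesting with $\star$. The commutative binary WNU supplied by Corollary~\ref{corollary:2wnu}, together with the hypothesis $|C|>2$ and the absorption-freeness of $C$, should provide enough structural information to pin down $a$ and build the required binary operations $\gamma_1,\gamma_2$ on $C$, whose extensions $\psi_1,\psi_2$ then yield, via composition, a ternary $\tau\in\IdPol_3(\mathbb{H})$ weakly pointing $C$ to $\{a\}$. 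The final verification of the three witnessing conditions is a direct computation from the value constraints on $\psi_1,\psi_2$.
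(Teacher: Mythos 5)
Your opening observation — that no \emph{binary} operation can weakly point $C$ to a singleton, because any two binary witnessing tuples would force $\{a\}\abs C$ — is correct, and it correctly identifies why one must go to higher arity. However, the heart of your proposal is left as a hope rather than a proof, and the hope is not clearly realizable.

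The crux of your plan is to find an element $a\in C$ with the property $a\in S_{a,c}$ for every $c\in C$, so that Lemma~\ref{lemma:binary_polymorphisms_on_C} can produce $\psi_2$ with the one-sided constancy $\psi_2(a,\cdot)\equiv a$ on $C$. You assert that choosing $a$ to be ``the result of an iterated $\star$-term applied to elements of $C$'' achieves this ``by the closure properties of $\mathcal F$.'' But the closure properties listed in the paper only say that the sets $S_{c,c'}$ are $\star$-closed and nest under substitution (if $x,y\in S_{c,c'}$ then $S_{x,y}\subseteq S_{c,c'}$); they do \emph{not} give any mechanism for forcing the left argument $a$ itself to land back inside $S_{a,c}$. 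There is no pigeonhole or fixed-point argument here, and the desired containment $a\in S_{a,c}$ is precisely the kind of thing that can fail: $S_{a,c}$ need not contain $a$ just because $a$ is built from $\star$-terms in $C$. You offer no argument for it, and I do not see one. Even granting this existence claim, the rest of the verification is gestured at (``direct computation from the value constraints'') but those constraints — in particular that the first and second witnessing conditions can be routed through other slices $\psi_2(\cdot,p_3)$, $\psi_2(\cdot,q_3)$ without the two-sided constancy that would force absorption — are exactly where the work would have to happen and they are not carried out.

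The paper's proof avoids this obstacle entirely by not committing to a fixed small arity. It proceeds by a double induction: a first claim collapses an arbitrary \emph{pair} $\{x,y\}\subseteq C$ to a singleton via a pointing operation with an extra ``symmetry'' property (proved by induction on $|S_{x,y}\cup\{x,y\}|$, using $c=x\star y$, $x'=x\star c$, $y'=y\star c$ to strictly shrink that set), and a second claim then collapses any $X\subseteq C$ by induction on $|X|$, composing the pairwise collapses via Observation~\ref{observation:composing_pointing_operations}. This naturally produces weakly pointing operations of arity growing with $|C|$, and the ``symmetry'' datum $\alpha$ is exactly what lets the composition of collapses go through without ever needing a two-sided constancy at any intermediate stage. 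Your proposal, being a one-shot ternary composition, lacks the inductive machinery that makes the argument close, and its key structural assumption about $a$ remains unjustified.
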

\begin{proof}
We start by showing that every two-element set is weakly pointed to a singleton by some operation, with an additional ``symmetry'' (property (ii) in the following claim):
\begin{claim}\label{claim:C_has_pointing_claim_one}
For every $x,y\in C$ there exist $\varphi\in\IdPol(\mathbb H)$ (say it is $n$-ary), $z\in C$, $\mathbf{c^1},\dots,\mathbf{c^n}\in C^n$ and $\alpha:C\to C$ such that the following holds:
\begin{enumerate}[(i)]
\item $\varphi |_C$ weakly points $\{x,y\}$ to $\{z\}$ with witnessing tuples $\mathbf{c^1},\dots,\mathbf{c^n}$.
\item For every $i\in[n]$ and $u\in C$, $\varphi(c^i_1,c^i_2,\dots,c^i_{i-1},u,c^i_{i+1},\dots,c^i_n)=\alpha(u)$.
\end{enumerate}
\end{claim}
\begin{subproof}[Proof of Claim \ref{claim:C_has_pointing_claim_one}]
We will prove Claim \ref{claim:C_has_pointing_claim_one} by induction on $|S_{x,y}\cup\{x,y\}|$. Assume first that $S_{x,y}\cap\{x,y\}\neq\emptyset$, say $x\in S_{x,y}$ (the argument for $y\in S_{x,y}$ is analogous). In that case we can apply Lemma \ref{lemma:binary_polymorphisms_on_C} to construct $\varphi\in\IdPol_2(\mathbb H)$ such that $\varphi(x,y)=\varphi(y,x)=\varphi(x,x)=x$ and $\varphi |_C$ is commutative (see the proof of Corollary \ref{corollary:2wnu}). Claim \ref{claim:C_has_pointing_claim_one} follows since $\varphi |_C$ weakly points $\{x,y\}$ to $\{x\}$, the witnessing tuple is $(x,x)$ for both coordinates and $\alpha(u)=\varphi(u,x)$ for all $u\in C$. This also covers the base step of our induction (i.e., $S_{x,y}\subseteq\{x,y\}$).

We can now assume that $S_{x,y}\cap\{x,y\}=\emptyset$. Let us define $c=x\star y$, $x'=x\star c$ and $y'=y\star c$. Using Lemma \ref{lemma:binary_polymorphisms_on_C} we can construct $\varphi\in\IdPol_2(\mathbb H)$ such that $\varphi(x,c)=\varphi(c,x)=x'$ and $\varphi(y,c)=\varphi(c,y)=y'$ and $\varphi |_C$ is commutative. In particular, $\varphi |_C$ points $\{x,y\}$ to $\{x',y'\}$, the witnessing tuple is $(c,c)$ for both coordinates.

Since $x',y'\in S_{x,y}$, it follows that $S_{x',y'}\cup\{x',y'\}\subseteq S_{x,y}\subsetneq S_{x,y}\cup\{x,y\}$. Hence, by induction assumption, Claim \ref{claim:C_has_pointing_claim_one} holds for $x',y'$. Let it be witnessed by $\psi\in\IdPol(\mathbb H)$ (say $m$-ary) weakly pointing $\{x',y'\}$ to $\{z\}$ with witnessing tuples $\mathbf{d^1},\dots,\mathbf{d^m}\in C^m$ and let $\alpha':C\to C$ be the corresponding mapping from property (ii).

Using Observation \ref{observation:composing_pointing_operations} we get that $(\psi\compose\varphi)|_C$ weakly points $\{x,y\}$ to $\{z\}$. We will prove that (ii) holds as well, with $\alpha:C\to C$ given by $\alpha(u)=\alpha'(\varphi(u,c))$, for $u\in C$. Choose $i\in[m]$ and $j\in\{1,2\}$. The witnessing tuple for the $(2(i-1)+j)$th coordinate of $\psi\compose\varphi$ (given by the proof of Observation \ref{observation:composing_pointing_operations}) is
$$
\mathbf{c^{i,j}}=(d^i_1,d^i_1,d^i_2,d^i_2,\dots,d^i_{i-1},d^i_{i-1},c,c,d^i_{i+1},d^i_{i+1},\dots,d^i_m,d^i_m).
$$
We verify property (ii) of $\psi\compose\varphi$ for $j=1$ (the proof is the same for $j=2$, since $\varphi(c,u)=\varphi(u,c)$):
\begin{align*}
  &(\psi\compose\varphi)(d^i_1,d^i_1,\dots,d^i_{i-1},d^i_{i-1},u,c,d^i_{i+1},d^i_{i+1},\dots,d^i_m,d^i_m)\\
  =&\psi(\varphi(d^i_1,d^i_1),\dots,\varphi(d^i_{i-1},d^i_{i-1}),\varphi(u,c),\varphi(d^i_{i+1},d^i_{i+1}),\dots,\varphi(d^i_m,d^i_m))\\
  =&\psi(d^i_1,\dots,d^i_{i-1},\varphi(u,c),d^i_{i+1},\dots,d^i_m)\\
  =&\alpha'(\varphi(u,c))=\alpha(u).
\end{align*}
The first equation is just the definition of $\psi\compose\varphi$, the second equation follows from idempotency of $\varphi$, the third equation is property (ii) for $\psi$, and the last equation is the definition of $\alpha$.
\end{subproof}

We will now compose the operations from Claim \ref{claim:C_has_pointing_claim_one} to construct a weakly pointing operation for $C$; we use another induction argument.

\begin{claim}\label{claim:C_has_pointing_claim_two}
For every nonempty $X\subseteq C$ there exists $c\in C$ and $\varphi\in\IdPol(\mathbb H)$ such that $\varphi |_C$ weakly points $X$ to $\{c\}$.
\end{claim}
\begin{subproof}[Proof of Claim \ref{claim:C_has_pointing_claim_two}]
We prove Claim \ref{claim:C_has_pointing_claim_two} by induction on $|X|$. If $X=\{x\}$, then the claim is trivial: take any $\varphi\in\IdPol(\mathbb H)$, $c=x$ and witnessing tuple $(x,x,\dots,x)$ for all coordinates.

Let $|X|=k>1$ and assume that Claim \ref{claim:C_has_pointing_claim_two} holds for all at most $(k-1)$-element subsets of $C$.
 Pick any $u,v\in X$, $u\neq v$ and let $\varphi\in\IdPol(\mathbb H)$ (say $n$-ary), $w\in C$ and $\alpha:C\to C$ be the objects given Claim \ref{claim:C_has_pointing_claim_one} applied to $u$ and $v$. It is easy to see that $\varphi |_C$ weakly points $X$ to $Y=\{\alpha(x)\mid x\in X\}$
 (this is where we need property (ii) from Claim \ref{claim:C_has_pointing_claim_one}). Since $\alpha(u)=w=\alpha(v)$, it follows that $|Y|<|X|$. By induction assumption, there exists $\psi\in\IdPol(\mathbb H)$ and $c\in C$ such that $\psi|_C$ weakly points $Y$ to $\{c\}$. Using Observation \ref{observation:composing_pointing_operations} we get that $(\psi\compose\varphi)|_C$ weakly points $X$ to $\{c\}$ which concludes the proof of Claim~\ref{claim:C_has_pointing_claim_two}.
\end{subproof}
The lemma now follows from Claim \ref{claim:C_has_pointing_claim_two} applied to $X=C$.
\end{proof}

We have achieved the goal of this subsection, i.e., the following corollary.

\begin{corollary} \label{corollary:E-neighbourhoods_of_singletons_are_SDmeet}
For every $b\in B$, $E_-(b)$ is $\mathrm{SD}(\wedge)$. Similarly for $a\in A$ and $E_+(a)$.
\end{corollary}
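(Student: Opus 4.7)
The plan is to invoke Corollary~\ref{corollary:every_AF_has_pointing} directly: a finite idempotent algebra is $\mathrm{SD}(\wedge)$ precisely when every absorption-free subuniverse admits a weakly pointing operation. Thus, to establish that the subalgebra $E_-(b) \leq \alg\mathbb{H}$ is $\mathrm{SD}(\wedge)$, it suffices to verify that every absorption-free subuniverse $C \leq E_-(b)$ has a weakly pointing term operation. Since subuniverses of subuniverses are subuniverses, any such $C$ is in particular a subuniverse of $\alg\mathbb{H}$ contained in $E_-(b)$, and absorption-freeness is intrinsic to $C$ as an algebra, so it matches the hypothesis of the fixed-$C$ setup at the start of this subsection.

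With that observation, the conclusion is immediate: all the hard work has already been performed in Lemmas~\ref{lemma:star_absorbs}--\ref{lemma:C_has_pointing}. Specifically, Lemma~\ref{lemma:C_has_pointing} produces, for the arbitrary absorption-free $C \leq E_-(b)$ we fixed, a term $\varphi \in \IdPol(\mathbb{H})$ such that $\varphi|_C$ weakly points $C$ to a singleton $\{c\} \subseteq C$. The restriction $\varphi|_{E_-(b)}$ is a term operation of the subalgebra $E_-(b)$, so $C$ has a weakly pointing operation when viewed inside $E_-(b)$ as required. Applying Corollary~\ref{corollary:every_AF_has_pointing} yields that $E_-(b)$ is $\mathrm{SD}(\wedge)$.

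For $E_+(a)$ with $a \in A$, I would simply remark that the entire argument of this subsection is symmetric under interchanging the roles of $A$ and $B$ (in particular, reversing all edges of $\mathbb{H}$ yields the same algebra $\alg\mathbb{H}$, as already noted when choosing $o \in A$). Every lemma from Lemma~\ref{lemma:star_absorbs} through Lemma~\ref{lemma:C_has_pointing} has a ``dual'' version for an absorption-free $D \leq E_+(a)$, and the same invocation of Corollary~\ref{corollary:every_AF_has_pointing} then completes the case. There is no genuine obstacle here — the corollary is a one-line consequence of the already-assembled machinery; the real content was in establishing the existence of weakly pointing operations on arbitrary absorption-free $C \leq E_-(b)$, which was carried out in the preceding lemmas via the two-step induction on $|S_{x,y} \cup \{x,y\}|$ and on $|X|$.
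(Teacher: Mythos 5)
Your proof is correct and follows exactly the same route as the paper: combine Lemma~\ref{lemma:C_has_pointing} (which handles an arbitrary absorption-free $C \leq E_-(b)$) with Corollary~\ref{corollary:every_AF_has_pointing}, and invoke the $A$/$B$ symmetry for the $E_+(a)$ case. Nothing to add.
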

\begin{proof}
By Lemma \ref{lemma:C_has_pointing}, every absorption-free subuniverse $C\leq E_-(b)$ has a weakly pointing operation and so we can apply Corollary \ref{corollary:every_AF_has_pointing}. The proof for $a\in A$ is analogous.
\end{proof}

\subsection{All absorption-free subuniverses are $\mathrm{SD}(\wedge)$}\label{subsection:all_AF}

The last step of our proof is to show that every absorption-free subuniverse $C$ of $A$ or $B$ has a weakly pointing operation. Theorem \ref{theorem:main} will then follow from Corollary \ref{corollary:every_AF_has_pointing} and Corollary \ref{corollary:reduction_to_top_and_bottom}.

\begin{lemma} \label{lemma:every_AF_of_A_has_pointing}
Every absorption-free subuniverse $C$ of $A$ or $B$ has a weakly pointing operation.
\end{lemma}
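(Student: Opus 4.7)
The plan is to induct on the common $E$-distance $k > 0$ of elements of $C$ from $o$, which is guaranteed by Lemma~\ref{lemma:AF_has_single_distance} and its dual. For the base case $k = 1$, the set $C$ is a subuniverse of $E_+(o)$, which is $\mathrm{SD}(\wedge)$ by Corollary~\ref{corollary:E-neighbourhoods_of_singletons_are_SDmeet}, so Theorem~\ref{theorem:every_subuniverse_has_pointing} supplies a weakly pointing operation for $C$.

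For the inductive step $k \geq 2$, I would define $\pi \colon C \to A \cup B$ sending each $c$ to the unique vertex of the oriented path $\mathbb Q_{o,c}$ at $E$-distance $k - 1$ from $o$, i.e., the $\mathbb T$-neighbor of $c$ on the way back to $o$. The first thing to verify is that $\pi$ commutes with every polymorphism of $\mathbb H$: given $\tau \in \IdPol_n(\mathbb H)$ and $c_1, \ldots, c_n \in C$, the element $\tau(\mathbf c)$ lies in $C$ and hence has $E$-distance exactly $k$ from $o$, so its unique $\mathbb T$-neighbor at distance $k - 1$ is $\pi(\tau(\mathbf c))$. On the other hand $\tau(\pi(c_1), \ldots, \pi(c_n))$ lies in the subuniverse $E_{k-1}(o)$ and is $\mathbb T$-adjacent to $\tau(\mathbf c)$, so it must be at distance $k - 1$ from $o$ (the neighbors of $\tau(\mathbf c)$ in $\mathbb T$ are at distance $k - 1$ or $k + 1$, and the latter is excluded by membership in $E_{k-1}(o)$); hence it coincides with $\pi(\tau(\mathbf c))$. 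Consequently $C' := \pi(C)$ is a subuniverse of $\alg\mathbb H$, every element of which is at distance exactly $k - 1$ from $o$.

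If $|C'| = 1$, say $C' = \{d^*\}$, then $C$ is contained in $E_-(d^*)$ or $E_+(d^*)$, which is $\mathrm{SD}(\wedge)$ by Corollary~\ref{corollary:E-neighbourhoods_of_singletons_are_SDmeet}, so $C$ has a weakly pointing operation. Otherwise $|C'| \geq 2$, and a pullback-of-absorption argument shows $C'$ is AF: any proper $D' \abs C'$ via some $\tau$ would lift to $\pi^{-1}(D') \cap C \abs C$ via the same $\tau$, contradicting AF-ness of $C$. By the inductive hypothesis, $C'$ admits a weakly pointing operation $\sigma$ with target $\{d^*\} \subseteq C'$. Lifting each witnessing tuple of $\sigma$ through the surjection $\pi \colon C \to C'$ shows that $\sigma$, viewed on $\mathbb H$, weakly points $C$ into $\pi^{-1}(d^*) \cap C$, which is a subuniverse of the $\mathrm{SD}(\wedge)$ algebra $E_-(d^*)$ (or $E_+(d^*)$); this subuniverse therefore admits a weakly pointing operation $\mu$ with some singleton target $\{c^*\}$. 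By Observation~\ref{observation:composing_pointing_operations}, the composite $\mu \compose \sigma$ is the desired weakly pointing operation on $C$.

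The main obstacle I anticipate is verifying that $\pi$ is polymorphism-compatible; this step relies crucially on $\mathbb T$ being an oriented tree, so that the neighbor of any vertex in the direction of $o$ is unique. The pullback of absorption, the lifting of witnessing tuples through $\pi$, and the composition of pointing operations are all routine with the machinery already developed in the paper.
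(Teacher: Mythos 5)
Your proof is correct and follows essentially the same route as the paper's. The map $\pi$ you define coincides with the paper's $\eta$ (induced by the subuniverse $E\cap(C\times D)$ where $D=E_+(C)\cap E_{k-1}(o)$), your $C'=\pi(C)$ is the paper's $D$, and the subsequent steps—verifying polymorphism-compatibility, pulling back absorption to show $C'$ is absorption-free, applying the inductive hypothesis, lifting witnessing tuples, pointing into a subuniverse of an $E$-neighbourhood of a singleton, and composing via Observation~\ref{observation:composing_pointing_operations}—match the paper's argument step for step.
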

\begin{proof}
Recall that by Corollary \ref{corollary:AF_has_single_distance}, for every absorption-free subuniverse $C$ of $A$ or $B$ there exists $k>0$ such that $\dist_\mathbb T(c,o)=k$ for all $c\in C$. We will proceed by induction on this distance $k$. The base step, $k=1$, follows from Lemma \ref{lemma:C_has_pointing} from the previous subsection, since in that case $C\leq E_+(o)$.

Let $k>1$ and assume that $C\leq A$ (the proof for $C\leq B$ is analogous). Let us denote by $D$ the subuniverse $D=E_+(C)\cap E_{k-1}(o)\leq B$. If $D=\{d\}$ for some $d\in B$, then $C\leq E_-(d)$ and $C$ has a weakly pointing operation by Lemma \ref{lemma:C_has_pointing}. Thus we can assume that $|D|>1$.

The binary relation $E\cap (C\times D)$ induces an onto mapping $\eta:C\to D$ defined by $\eta(c)=d$, where $d\in D$ is unique such that $(c,d)\in E$ (this is because $\mathbb T$ is a tree; see Figure \ref{figure:AF_CandD}).

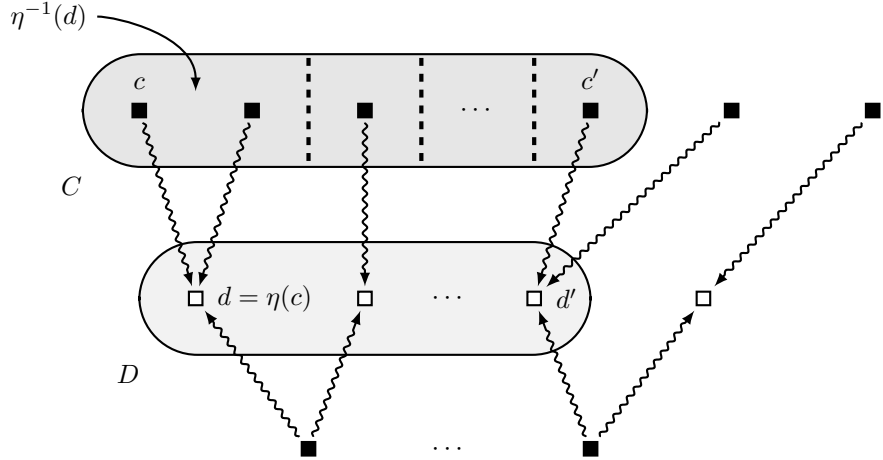
\begin{figure}[ht!]
\centering
\begin{tikzpicture}[thick,xscale=1.5,yscale=2.5,
  blacksquare/.style={rectangle,draw,outer sep=2pt,inner sep=0pt,minimum size=1.8mm,fill},
  whitesquare/.style={rectangle,draw,outer sep=2pt,inner sep=0pt,minimum size=1.8mm,fill=white},
  >=latex]
\draw[rounded corners=22pt,fill=gray!20!white] (2.5,0.3) rectangle (-2.5,-0.3) {};
\node (C) at (-2.6,-0.4) {$C$};
\draw[rounded corners=22pt,fill=gray!10!white] (-2,-1.3) rectangle (2,-0.7) {};
\node (D) at (-2.1,-1.4) {$D$};
\node [blacksquare,label=north:{$c$}] (c1) at (-2,0) {};
\node [blacksquare](c2) at (-1,0) {};
\node [blacksquare](c3) at (0,0) {};
\node [blacksquare,label=north:{$c'$}](c4) at (2,0) {};
\node [blacksquare](c5) at (3.25,0) {};
\node [blacksquare](a1) at (4.5,0) {};
\node (cdots) at (1,0) {$\cdots$};
\node [whitesquare,label=east:{$d=\eta(c)$}](d1) at (-1.5,-1) {};
\node [whitesquare](d2) at (0,-1) {};
\node [whitesquare,label=east:{$d'$}](d3) at (1.5,-1) {};
\node [whitesquare](b1) at (3,-1) {};
\node (cdots) at (0.75,-1) {$\cdots$};
\draw[->,decorate,decoration={snake,segment length=1.5mm,amplitude=0.3mm,post length = 2mm}] (c1) -- (d1);
\draw[->,decorate,decoration={snake,segment length=1.5mm,amplitude=0.3mm,post length = 2mm}] (c2) -- (d1);
\draw[->,decorate,decoration={snake,segment length=1.5mm,amplitude=0.3mm,post length = 2mm}] (c3) -- (d2);
\draw[->,decorate,decoration={snake,segment length=1.5mm,amplitude=0.3mm,post length = 2mm}] (c4) -- (d3);
\draw[->,decorate,decoration={snake,segment length=1.5mm,amplitude=0.3mm,post length = 2mm}] (c5) -- (d3);
\draw[->,decorate,decoration={snake,segment length=1.5mm,amplitude=0.3mm,post length = 2mm}] (a1) -- (b1);
\node [blacksquare](e1) at (-0.5,-1.8) {};
\node [blacksquare](e2) at (2,-1.8) {};
\node (cdots) at (0.75,-1.8) {$\cdots$};
\draw[->,decorate,decoration={snake,segment length=1.5mm,amplitude=0.3mm,post length = 2mm}] (e1) -- (d1);
\draw[->,decorate,decoration={snake,segment length=1.5mm,amplitude=0.3mm,post length = 2mm}] (e1) -- (d2);
\draw[->,decorate,decoration={snake,segment length=1.5mm,amplitude=0.3mm,post length = 2mm}] (e2) -- (d3);
\draw[->,decorate,decoration={snake,segment length=1.5mm,amplitude=0.3mm,post length = 2mm}] (e2) -- (b1);
\draw[dashed,line width=0.5mm,ultra thick] (-0.5,0.28) -- (-0.5,-0.28);
\draw[dashed,line width=0.5mm,ultra thick] (0.5,0.28) -- (0.5,-0.28);
\draw[dashed,line width=0.5mm,ultra thick] (1.5,0.28) -- (1.5,-0.28);
\node (label) at (-2.8,0.5) {$\eta^{-1}(d)$};
\node (target) at (-1.5,0.05) {};
\draw[->] (label) .. controls (-2.2,0.5) and (-1.5,0.4) .. (target);
\end{tikzpicture}
\caption{The absorption-free subuniverses $C$ and $D$.}\label{figure:AF_CandD}
\end{figure}

The relation  $E\cap (C\times D)$ is preserved by every $\varphi\in\IdPol(\mathbb H)$ (see Lemma \ref{lemma:ABE_subuniverses}). The following are easy consequences of this fact:
\begin{itemize}
\item for every $D'\leq D$ the set $\eta^{-1}(D')$ is a subuniverse of $C$,
\item if $D'\abs D$, then $\eta^{-1}(D')\abs C$ (the absorbing polymorphism is the same),
\item for $D'\leq D$, $D'=D$ if and only if $\eta^{-1}(D')=C$ (since $\eta$ is onto).
\end{itemize}
Combining these facts together with the fact that $C$ is absorption-free yields that $D$ is absorption-free. Hence by induction assumption $D$ has a weakly pointing operation.

Let $\varphi\in\IdPol(\mathbb H)$ (say $n$-ary) be such that $\varphi|_D$ weakly points $D$ to $\{d\}$ with witnessing tuples $\mathbf{d^1},\dots,\mathbf{d^n}$. It is easy to verify that $\varphi|_C$ weakly points $C$ to $\eta^{-1}(d)$; any $\mathbf{c^1},\dots,\mathbf{c^n}\in C^n$ such that $\eta(c^i_j)=d^i_j$ (for $i,j\in[n]$) can serve as witnessing tuples.

Since $\eta^{-1}(d)\leq E_-(d)$, it follows from Corollary \ref{corollary:E-neighbourhoods_of_singletons_are_SDmeet} and Theorem \ref{theorem:every_subuniverse_has_pointing} that $\eta^{-1}(d)$ has a weakly pointing operation. Let $\psi\in\IdPol(\mathbb H)$ and $c\in\eta^{-1}(d)$ be such that $\psi|_{\eta^{-1}(d)}$ weakly points $\eta^{-1}(d)$ to $\{c\}$. In particular, $\psi|_C$ weakly points $\eta^{-1}(d)$ to $\{c\}$ and thus by Observation \ref{observation:composing_pointing_operations}, $(\psi\compose\varphi)|_C$ weakly points $C$ to $c$.
\end{proof}

\begin{remark*}
In the language of universal algebra, the relation $E\cap (C\times D)$ is the graph of an onto homomorphism $\eta:C\to D$ and thus, by the First Isomorphism Theorem, $D$ is isomorphic to the quotient of $C$ over the kernel of $\eta$. The induction step in the previous lemma follows easily from this observation.
\end{remark*}

\begin{proof}[Proof of Theorem \ref{theorem:main} and Corollary \ref{corollary:dichotomy}]
Let $\mathbb H$ be a special tree such that $\alg\mathbb H$ is Taylor. In Lemma \ref{lemma:every_AF_of_A_has_pointing} we proved that every absorption-free subuniverse of $A$ or $B$ has a weakly pointing operation. By Corollary \ref{corollary:every_AF_has_pointing}, both $A$ and $B$ are $\mathrm{SD}(\wedge)$ and thus it follows from Corollary \ref{corollary:reduction_to_top_and_bottom} that $\alg\mathbb H$ is $\mathrm{SD}(\wedge)$.

It is easy to see that the core of a special tree is again a special tree. If $\mathbb H$ is a core, then either $\alg\mathbb H$ is not Taylor, in which case $\mathrm{CSP}(\mathbb H)$ is \textbf{NP}-complete by Theorem \ref{theorem:noWNU_NPc}, or $\alg\mathbb H$ is $\mathrm{SD}(\wedge)$ and $\mathbb H$ has bounded width by Theorem~\ref{theorem:bounded_width}.
\end{proof}

\section{Discussion}

We believe that given the evidence, it is reasonable to conjecture that our result generalizes to all oriented trees. (A natural first step would be to confirm this conjecture for all triads.) Moreover, we hope that the techniques developed in this paper will be useful in pursuit of the proof.

\begin{conjecture}
For every oriented tree $\mathbb H$, either $\alg\mathbb H$ is not Taylor or it is $\mathrm{SD}(\wedge)$. In particular, if $\mathbb H$ is a core, then $\mathbb H$ has bounded width or $\mathrm{CSP}(\mathbb H)$ is \textbf{NP}-complete.
\end{conjecture}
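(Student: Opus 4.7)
The plan is to establish $\mathrm{SD}(\wedge)$ by producing, via Corollary~\ref{corollary:every_AF_has_pointing}, a weakly pointing term operation on every absorption-free subuniverse of $\alg\mathbb H$. The first step is a reduction to the top and bottom level sets $A$ and $B$ of $\mathbb H$. Using a universal minimal path supplied by Lemma~\ref{lemma:minpaths}, I expect $A$, $B$ and the tree-edge relation $E$ to be primitive positive definable from $\mathbb H$, hence subuniverses of the appropriate powers of $\alg\mathbb H$. A polymorphism-surgery argument -- splitting inputs by whether they lie in the diagonal, on a common minimal path inside $\mathbb H$, or at mixed levels -- should upgrade any polymorphism that is a WNU on both $A$ and $B$ into one that is a WNU on all of $H$. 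Combined with Lemma~\ref{lemma:SDmeet_closed_under_products} and Theorem~\ref{theorem:WNU=Taylor_manyWNU=SDmeet}, this reduces the theorem to showing that both $A$ and $B$ are $\mathrm{SD}(\wedge)$.

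The second step uses the Taylor hypothesis to locate a singleton absorbing subuniverse. Starting from a WNU polymorphism $\omega$, sharpened by iterated self-composition so that its binary polymer $\circ$ satisfies $x\circ(x\circ y)=x\circ y$, I would run a maximality argument on paths in $\mathbb T$ along which $\circ$-defects propagate by two $E$-steps at a time; acyclicity of $\mathbb T$ forces such paths to terminate, producing an element $o\in A\cup B$ with $\{o\}$ absorbing its $E_2$-neighborhood via $\omega$. Propagating outward through $\mathbb T$ using the same acyclicity-plus-specialness trick should then boost this to $\{o\}\abs A$ (say). This absorption in turn pins every absorption-free subuniverse of $A$ or $B$ to a single fixed $E$-distance from $o$, which is what will make the next induction go through.

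The core argument then splits into two inductive parts. First, for each $b\in B$, I will show that every absorption-free $C\leq E_-(b)$ carries a weakly pointing operation. The construction iterates $\circ$ into a binary polymorphism $\star$ which annihilates any $B$-element lying above $C$ down to $b$; closing the resulting binary polymorphisms under further $\star$-composition yields enough flexibility to realize many binary operations on $C$. A two-stage induction then produces a weakly pointing operation: first show that any two-element subset of $C$ can be pointed to a singleton by some polymorphism with a symmetric witnessing structure, and then use Observation~\ref{observation:composing_pointing_operations} to reduce pointing a $k$-element subset to pointing a $(k-1)$-element one. Second, for a general absorption-free $C$ at $E$-distance $k>1$ from $o$, I will exploit the tree structure to define the natural parent map $\eta:C\to D:=E_+(C)\cap E_{k-1}(o)$, argue that $D$ inherits absorption-freeness from $C$ because subuniverses of $D$ pull back along the surjective $\eta$ to subuniverses of $C$, and obtain a weakly pointing operation on $C$ by composing one on $D$ (by induction on $k$) with one on a single $\eta$-fiber, which lies inside some $E_-(d)$ and thus falls under the case already handled.

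The step I expect to be genuinely delicate is the $E_-(b)$ case. The challenge is to show that the family of binary polymorphisms built from $\star$ is rich enough to realize any ``reasonable'' binary operation on $C$ -- flexible enough, in particular, to yield the symmetric two-element pointing that drives the induction -- and to leverage the absorption-freeness of $C$ to prevent the pointing construction from stalling at a proper subset before collapsing to a singleton. Making the interplay between the specialness of $\omega$, the acyclic structure of $\mathbb T$, and the global singleton absorption $\{o\}\abs A$ precise enough to close this inductive step is where the main technical effort will lie; once this is done, the reduction to top and bottom levels together with the distance-on-the-tree induction should assemble the full theorem.
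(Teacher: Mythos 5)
What you have written is essentially a sketch of the paper's proof of Theorem~\ref{theorem:main}, which is the result for \emph{special} trees. But the statement you are asked to prove is a \emph{conjecture} about \emph{all} oriented trees, and the paper itself explicitly does not prove it; indeed, the Discussion section is devoted to explaining why the present techniques fall short of it.

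The gap is structural and occurs already at your first step. For a $\mathbb T$-special tree, the reduction to the top and bottom levels works because every level-$0$ vertex is joined to every adjacent level-$h$ vertex by a \emph{minimal} path of a \emph{common} height, so Lemma~\ref{lemma:minpaths} produces a single universal minimal path $\mathbb Q$ whose homomorphic images pp-define $E$, hence $A$ and $B$ (this is Lemma~\ref{lemma:ABE_subuniverses}). For a general oriented tree these hypotheses simply fail: the level-$0$ vertices and the level-$h$ vertices are not connected by minimal paths, there is no height-$1$ skeleton $\mathbb T$ to speak of, and there is no pp-definition of a relation playing the role of $E$. Without subuniverses $A$, $B$ and a compatible relation $E\leq A\times B$, the polymorphism-surgery in Lemma~\ref{lemma:extending_WNUs}, the singleton-absorption argument of Lemma~\ref{lemma:WNUabsorbing_E2o}, the $E$-distance induction of Lemma~\ref{lemma:AF_has_single_distance}, and the ``parent-map'' induction of Lemma~\ref{lemma:every_AF_of_A_has_pointing} all lose their footing; every later step in your outline depends on this scaffolding. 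The paper also names a second obstruction that your plan does not address: the reduction-to-two-levels argument only controls the diagonal component of $\mathbb H^n$, and the off-diagonal components of powers of a general oriented tree are not handled by these methods. In short, your proposal correctly reproduces the special-tree proof, but it does not supply the new ideas needed to pass from special trees to arbitrary oriented trees, and so it does not prove the conjecture.
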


The reader may wonder why we need two different characterizations of $\mathrm{SD}(\wedge)$ algebras, i.e., why we use WNU operations for the proof of Corollary \ref{corollary:reduction_to_top_and_bottom}. The reason is that our techniques used later in the proof are not well suited to deal with non-diagonal connected components of powers of the digraph $\mathbb H$. This is one obstacle to generalizing the result to all oriented trees.

Another shortcoming is that we cannot get a good handle of polymorphisms of higher arities than binary. For example, it follows from Corollary \ref{corollary:2wnu} that neither $A$ nor $B$ can have a two-element absorption-free subuniverse, and, in fact, we can prove that $\alg\mathbb H$ (if it is Taylor) cannot have a two-element absorption-free subuniverse at all (we will not present the argument here, but it is similar in spirit to the proof of Lemma \ref{lemma:extending_WNUs}). We do not know if this result can be extended to more than two elements. Hence the following open problem.

A finite idempotent algebra $\mathbf A$ is \emph{always absorbing}, if for every nonempty $B\leq\mathbf A$ there exists $b\in B$ such that $\{b\}\abs B$. (Equivalently, there are no absorption-free algebras in the pseudovariety generated by $\mathbf A$, see \cite[Proposition 2.1]{barto_maltsev_2013}.)

\begin{problem*}
Let $\mathbb H$ be a (special, or any oriented) tree such that $\alg\mathbb H$ is Taylor. Is $\alg\mathbb H$ \emph{always absorbing}?
\end{problem*}

\noindent By Corollary \ref{corollary:every_AF_has_pointing}, always absorbing algebras are $\mathrm{SD}(\wedge)$. A positive answer to this problem would likely significantly simplify our proof.

Lemmata \ref{lemma:WNUabsorbing_E2o} and \ref{lemma:singleton_absorbing} establish existence of a singleton absorbing subuniverse of $A$ or $B$. Incidentally, the Absorption Theorem of Barto and Kozik \cite[Theorem 2.3]{barto_absorbing_2012} applied to $A$, $B$,
 and $E$ immediately yields that fact. However, for the rest of the proof we need the fact that the absorbing operation is a WNU operation. Can the proof of the Absorption Theorem in \cite{barto_absorbing_2012} be improved to achieve this stronger claim?

\emph{Special balanced digraphs}, a relaxation of the definition of special trees to balanced digraphs, appear naturally in the reduction of constraint satisfaction problems to digraph $\mathbb H$-coloring \cite{bulin_reduction_2013,bulin_finer_2015,feder_computational_1999}. The reader may notice similarities with some of the proofs in \cite{bulin_finer_2015}. Can our techniques be adapted to obtain interesting results about special balanced digraphs?

\section*{Acknowledgements}
The author would like to thank Libor Barto and the anonymous reviewers for thoughtful comments and valuable input. The work was supported by the following grant projects: GA\v CR
  201/09/H012 \& 13-01832S, M{\v S}MT {\v C}R 7AMB13PL013, GA UK 67410 \& 558313, and SVV-2014-260107.

\bibliography{spectrees}
\bibliographystyle{spmpsci}

\end{document}